\numberwithin{equation}{section}
\newtheorem{theorem}{Theorem}[section]
\newtheorem{prop}[theorem]{Proposition}
\newtheorem{lemma}[theorem]{Lemma}
\newtheorem{conjecture}[theorem]{Conjecture}
\theoremstyle{definition}
\theoremstyle{remark}
\newtheorem{remark}[theorem]{Remark}
\newcommand{\R}{\mathbb{R}}
\newcommand{\T}{\mathbb{T}}
\renewcommand{\hat}{\widehat}
\begin{document}
\title[Polynomial partitioning and restriction study guide]{A study guide for ``A restriction estimate using polynomial partitioning''}

\author{John Green}
\address{University of Pennsylvania}
\email{jdgreen@sas.upenn.edu}

\author{Terry Harris}
\address{University of Wisconsin--Madison}
\email{terry.harris@wisc.edu}

\author{Kaiyi Huang}
\address{University of Wisconsin--Madison}
\email{khuang247@wisc.edu}

\author{Arian Nadjimzadah}
\address{University of California--Los Angeles}
\email{anad@math.ucla.edu}

\begin{abstract}
    This manuscript is intended as an accompaniment to Guth's ``A restriction estimate using polynomial partitioning". We begin by summarizing the core ideas of the proof, elaborating the history and development of the techniques therein. From there, we provide supplementary details on some of the standard methods and more technical arguments which may be unfamiliar or less accessible to readers not yet acquainted with the paper. We also provide a summary of some more recent developments since the publication of Guth's work.
\end{abstract}

\maketitle

\section{Introduction}
The goal of this manuscript is to discuss Guth's paper ``A restriction estimate using polynomial partitioning" \cite{restest}. In this opening section, we will discuss the history and motivations leading up to the paper, and give a high-level overview of the argument. In the following sections, we will discuss the limitations of the argument, and how later work has refined and built upon it. We will also provide some supplementary discussion on select topics to assist first-time readers of the paper.

The main result is the following:

\begin{theorem}\label{thm: restriction_result}
If $S\subseteq\mathbb{R}^3$ is a compact $C^\infty$ hypersurface (possibly with boundary) having strictly positive second fundamental form, then for all $p>3.25$ and $f\in L^\infty(S)$, we have
$$\|E_Sf\|_{L^p(\mathbb{R}^3)}\leq C(p,S)\|f\|_{L^\infty(S)},$$
where $E_S$ is the extension operator,
$$E_Sf(x):=\int_Se^{i\omega x}f(\omega)\,d\text{vol}_S(\omega).$$ 
\end{theorem}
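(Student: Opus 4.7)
The plan is to adapt the polynomial partitioning method to the restriction problem, proving the stronger local statement $\|E_S f\|_{L^p(B_R)} \leq C_\epsilon R^\epsilon \|f\|_{L^\infty(S)}$ for every $\epsilon>0$ by a strong induction on the radius $R$. First I would perform a wave packet decomposition at scale $R$: partition $S$ into caps $\theta$ of radius $R^{-1/2}$ and decompose $f|_\theta$ along a tangent-space lattice of spacing $R^{1/2}$, so that each piece $E_S f_{\theta,v}$ is essentially supported on a tube $T_{\theta,v}$ of radius $\sim R^{1/2+\delta}$ and length $R$ pointing along the normal to $\theta$. Standard reductions (truncating wave packet amplitudes, pigeonholing a ``significant'' set of tubes) let us pretend $E_S f$ is the sum over a well-behaved collection.

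Next I would apply the polynomial partitioning theorem to the density $|E_S f|^p$ on $B_R$, producing a polynomial $P$ of degree $D$ (to be chosen polylogarithmic in $R$) whose zero set $Z(P)$ partitions $\mathbb{R}^3$ into $\sim D^3$ cells $O_i$ each carrying roughly equal $L^p$ mass. The fundamental dichotomy is then: either most of $\|E_S f\|_{L^p(B_R)}^p$ lies inside the union of the cells (the \emph{cellular} case), or it concentrates in the neighborhood $N:=N_{R^{1/2+\delta}}(Z(P))$ of the variety (the \emph{algebraic} case). In the cellular case, each wave packet tube meets at most $D+1$ cells by B\'ezout, so after pigeonholing a typical cell we find a subcollection of tubes whose cardinality is a factor $\sim D^2$ smaller than the original; this corresponds to an input function of much smaller $L^\infty$-norm, and the induction on $R$ closes provided $p$ is large enough that the combinatorial gain beats the loss of restricting to a single cell.

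The algebraic case is handled by further splitting tubes according to their relation with $Z(P)$ into \emph{transverse} and \emph{tangential} families. Transverse tubes pierce the variety non-tangentially, so each meets $N$ in $O(D)$ essentially disjoint balls of radius $R^{1/2+\delta}$; on each such ball I would apply the inductive hypothesis at the smaller scale, summing over balls with acceptable loss. The tangential tubes, which lie within angle $R^{-1/2+\delta}$ of $Z(P)$ over their whole length, are the crux: they can pile up densely on the $2$-dimensional variety. To control them I would use Wongkew's bound on the volume of an algebraic surface inside a tube, a two-ends reduction to rule out concentration at a smaller ball, and combine these with a quantitative bilinear-type input (coming from Tao's bilinear extension estimate or a broad-narrow decomposition) so that the tangential contribution at exponent $p$ is dominated by a scale-invariant quantity that can be absorbed into the induction.

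The hard part is calibrating the three contributions --- cellular, transverse-algebraic, and tangential-algebraic --- so that the induction actually closes at the threshold $p>3.25$. The cellular and transverse terms gain multiplicative factors $D^3$ and $D$ respectively, which translate into an $R^\epsilon$-affordable loss; the true bottleneck is the tangential term, whose bound requires showing that only few tangential wave packets can be concentrated near a degree-$D$ surface of controlled complexity. It is this polynomial-Wolff-type count, combined with just enough bilinear input, that furnishes the improvement from Tao's $10/3$ to Guth's $13/4$, and obtaining it with the correct $\delta,\epsilon$ book-keeping is where the main technical effort must be invested.
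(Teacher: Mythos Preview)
Your outline captures the broad architecture, but there are two genuine gaps that would prevent the induction from closing.

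First, the broad--narrow reduction is not an optional ingredient to be invoked only in the tangential case; in the paper it is the outer shell of the entire argument. One first reduces the restriction estimate to a bound on the \emph{broad part} $\text{Br}_{K^{-\varepsilon}}Ef$, and only then applies polynomial partitioning to $(\text{Br}_\alpha Ef)^{3.25}$. The point is that broadness is (approximately) inherited by the cellular and transverse pieces, which is what makes the induction self-consistent, and in the tangential case broadness is exactly what guarantees contributions from two non-adjacent caps so that a bilinear C\'ordoba-type $L^4$ bound can be applied. If you partition $|E_Sf|^p$ directly, the narrow contribution (where $Ef$ is dominated by a single cap) is not controllable inside the partitioning scheme; it must be peeled off first and handled by parabolic rescaling and induction on $R$.

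Second, the cellular step does \emph{not} produce an input of smaller $L^\infty$ norm. Restricting to the tubes $\mathbb{T}_i$ that meet a cell $O_i'$ leaves $\|f_i\|_\infty$ essentially unchanged; what shrinks is the $L^2$ mass, since each tube meets at most $D+1$ cells and there are $\sim D^3$ cells, giving $\sum_\tau\int|f_{\tau,i}|^2\lesssim D^{-2}\sum_\tau\int|f_\tau|^2$ for a good $i$. This is precisely why the paper formulates the inductive hypothesis (Theorem~\ref{induction}) with a mixed $L^2$/local-$L^2$-average right-hand side $(\sum_\tau\int|f_\tau|^2)^{3/2+\varepsilon}$ rather than $\|f\|_\infty$, and why the induction in the cellular case is on $\sum_\tau\int|f_\tau|^2$ rather than on $R$. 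An induction stated purely in terms of $\|f\|_\infty$ cannot exploit the cellular gain. A smaller but related point: the transverse step is run on balls $B_j$ of radius $R^{1-\delta}$ (so that the inductive hypothesis at a slightly smaller radius applies), not on balls of radius $R^{1/2+\delta}$, and the degree is $D=R^{\delta_{\mathrm{deg}}}$, a small power of $R$, not polylogarithmic.
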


Note that the conjectured range is $p>3$, but this result presented an improvement on the known range. The key new ingredient that enabled this result was the polynomial partitioning method introduced by Guth and Katz \cite{distdist}.

We begin this discussion by tracing some key moments in the genealogy of this method so as to properly motivate its development and application in harmonic analysis.

The first of these we shall discuss is the introduction of partitioning methods in incidence geometry by Clarkson, Edelsbrunner, Guibas, Sharir, and Welzl \cite{combcomp}. Here is a very heuristic introduction to the philosophy of this method.

In a typical incidence geometry problem, we might want to count the number of intersection points of some collection of lines, satisfying certain properties. It is natural to try to break this into subproblems which can be solved separately, perhaps by considering subcollections of the lines, for instance. If we can find some procedure for doing so which preserves exploitable geometric structure in the problem, then it suffices to solve these subproblems (which we may solve e.g. by induction).

The high-level idea of partitioning is to find procedures for dividing into such subproblems which, in the aforementioned paper and several which followed, was done using probabilistic arguments. Polynomial partitioning instead uses elementary topological arguments to select a ``good" partition.

Before we detail this, though, it is pertinent to consider the history between the introduction of partitioning and the introduction of polynomial partitioning. Firstly (as will be relevant to later discussion), Bourgain \cite{btypemax} established combinatorial estimates about overlapping tubes pointing in different directions, arising in connection to the Kakeya maximal function. He then applied these estimates also to obtain restriction estimates by making use of an associated wave packet decomposition.

Wolff \cite{kakeyacirc} noted the similarities between point-line incidence problems in incidence geometry and the problems on overlapping tubes Bourgain had considered, and leveraged methods from incidence geometry to obtain estimates in harmonic analysis. In exploring this connection, Wolff \cite{reckakeya} also introduced the finite field analogue of the Kakeya problem.

\subsection{Motivation: Finite Field Kakeya and Polynomial methods}

Let $\mathbb{F}_q$ denote a finite field of $q$ elements. By a Kakeya set we mean a set $K\subseteq\mathbb{F}_q^n$ which contains a line in every direction. That is, for every direction $x\in\mathbb{F}_q^n$, there exists $y\in\mathbb{F}_q^n$ such that
$$L_{x,y}:=\{a\cdot x+y:a\in\mathbb{F}\}$$
is a subset of $K$. The finite field Kakeya conjecture is that there should be a positive constant $C_n$, independent of $q$, such that every such Kakeya set has size at least $C_nq^n$.

This was later solved by Dvir \cite{ffkakeya}. The proof goes as follows:

Let $d=q-1$. Suppose for a contradiction that
$$|K|<\left(\begin{matrix}
d+n\\
n
\end{matrix}\right)\sim_n q^n.$$
The number in the middle is simply the dimension of the space $\mathcal{P}_d$ of polynomials of degree at most $d$. The map sending polynomials in $\mathcal{P}_d$ to the vector of its values at the points of $K$ has rank at most $|K|$, so by assumption its kernel is non-zero, i.e. there exists a non-zero polynomial $P\in\mathcal{P}_d$ which is $0$ on $K$. Denote its degree by $d'\leq d$.

Let $Q$ be the homogeneous part of $P$, that is, the sum of the degree $d'$ terms of $P$. Let $x\in\mathbb{F}_q^n$, and $y\in\mathbb{F}_q^n$ be such that $L_{x,y}\subseteq K$. We have that $P_{x,y}(t):=P(t\cdot x+y)$ has degree at most $d'<q$, but has $q$ zeroes, hence it is the zero polynomial. In particular, the coefficient of $t^{d'}$ in $P_{x,y}(t)$ is $0$, but by expanding $P(t\cdot x+y)$ we easily see that this coefficient is simply $Q(x)$. Hence $Q$ is $0$ at every $x$, and it follows that we must have $P=0$. The bound follows.

The proof essentially works because the dimension of the space of degree $d$ polynomials is comparable to $d^n$, but along lines this drops to $d$. This numerology proves to be incredibly efficient for studying incidence phenomena with lines.

The next step of the historical developments was to try to find ways to leverage this highly efficient numerology to study continuum problems, which leads us to Guth's \cite{multkakeya} continuum version of the ``polynomial method". The basic observation was the ``polynomial ham sandwich theorem", which states that ``given finite volume open sets $U_1,\ldots,U_N$, where $N$ is $\dim(\mathcal{P}_d)-1$, there exists a degree $d$ polynomial whose zero set bisects each $U_i$".

We give a slightly different result later. Both make use of the Borsuk-Ulam Theorem, and the other methods in Guth's paper on multilinear Kakeya \cite{multkakeya} use further machinery from algebraic topology, though we note that both the Borsuk-Ulam Theorem and the remainder of the proof can be understood without the use of algebraic topology, see \cite{borsukulam} and \cite{multkakeya2}.

\subsection{Polynomial partitioning}
The core principle of polynomial partitioning is to combine the partitioning methods of incidence geometry with the high-efficiency numerology of the polynomial methods. The technique was developed by Guth and Katz \cite{distdist} and was implemented in their study of the distinct distances problem. Instead of the polynomial ham sandwich theorem given above, they proved a similar theorem for finite collections of points, also using the Borsuk-Ulam Theorem:

\begin{theorem}
Let $X\subseteq\mathbb{R}^n$ be finite. Then for each $D\geq 1$, there is a polynomial $P$ of degree at most $D$ so that each component of $\mathbb{R}^n\setminus Z(P)$ contains at most $\lesssim_n D^{-n}|X|$ points of $X$.
\end{theorem}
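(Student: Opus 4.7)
The plan is to iteratively halve $X$ using polynomials of geometrically increasing degree and then take their product. The key ingredient is a bisection lemma: for any $M$ finite sets $X_1, \ldots, X_M \subseteq \R^n$, there exists a nonzero polynomial $P$ of degree at most $O_n(M^{1/n})$ such that each of the open sets $\{P > 0\}$ and $\{P < 0\}$ contains at most $|X_i|/2$ points of $X_i$ for every $i$. Granting this lemma, the theorem follows by induction on a careful choice of iteration depth.

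To derive the bisection lemma, I would invoke the Veronese embedding $V \colon \R^n \to \R^{N}$ with $N = \binom{d+n}{n}$, sending $x$ to the tuple of all monomials $x^\alpha$ with $|\alpha| \leq d$. Polynomials on $\R^n$ of degree at most $d$ correspond exactly to linear functionals on $\R^N$, so bisecting a set in $\R^n$ by the zero set of a degree $\leq d$ polynomial is equivalent to bisecting its Veronese image by a hyperplane through the origin. The classical ham sandwich theorem, a direct consequence of Borsuk-Ulam, states that any $N-1$ finite point sets in $\R^N$ can be simultaneously bisected by such a hyperplane. Choosing $d$ so that $N - 1 \geq M$, i.e.\ $d \sim_n M^{1/n}$, yields the lemma.

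Granting this, we iterate. Fix $j$ with $2^{j-1} < D^n \leq 2^{j}$ and build polynomials $P_1, \ldots, P_j$ inductively. At step $i$, there are at most $2^{i-1}$ nonempty cells cut out by the sign patterns of $P_1 \cdots P_{i-1}$, each intersecting $X$ in some subset. Apply the bisection lemma to these $\leq 2^{i-1}$ subsets to obtain a polynomial $P_i$ of degree $O_n(2^{(i-1)/n})$ whose zero set simultaneously bisects all of them. Setting $P := P_1 P_2 \cdots P_j$, geometric summation with ratio $2^{1/n} > 1$ yields
$$\deg P \;\leq\; C_n \sum_{i=1}^j 2^{(i-1)/n} \;\leq\; C_n' \, 2^{j/n} \;\lesssim_n\; D.$$
Every connected component of $\R^n \setminus Z(P)$ lies in a single sign cell of $P_1 \cdots P_j$, so after $j$ successive halvings it meets at most $|X|/2^j \lesssim_n D^{-n} |X|$ points of $X$, as required.

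The main technical obstacle is the bisection lemma, which ultimately rests on Borsuk-Ulam. The subtlety is that the natural map $v \in S^{N-1} \mapsto (\#\{x \in X_i : x \cdot v > 0\} - |X_i|/2)_{i=1}^{N-1}$ from unit directions to \emph{excess vectors} is not continuous on finite point sets, since half-space membership can jump. One remedies this by replacing each $X_i$ with a smooth signed-measure approximation (or equivalently by spreading each point into a small ball), applying the continuous Borsuk-Ulam argument to produce a hyperplane balancing all $N-1$ approximations, and then passing to a limit; any points landing on the limiting hyperplane contribute to $Z(P)$ rather than to either open half-space, which is precisely what the statement permits.
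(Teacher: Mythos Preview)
Your proof is correct and follows essentially the same approach as the paper (and the original Guth--Katz argument it cites): iteratively apply a Borsuk--Ulam--based polynomial ham sandwich lemma to bisect all current cells, using polynomials of geometrically growing degree, and take the product. The only cosmetic difference is that you phrase the bisection lemma via the Veronese embedding and the classical ham sandwich theorem, whereas the paper applies Borsuk--Ulam directly to the map $v \mapsto (\int_{v>0} W_j - \int_{v<0} W_j)_j$ on the space of polynomials; your smoothing step to handle the discontinuity for finite point sets is the standard and correct fix.
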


Here $Z(P)$ denotes the zero set of $P$. Combined with the classical observation of Milnor \cite{bettino} that every degree $D$ polynomial divides $\mathbb{R}^n$ into at most $\sim_n D^n$ components, this means that every cell contains at most an even share of the points of $X$, but it could also be that the majority of points of $X$ in fact lie on the variety $Z(P)$.

The technique is best illustrated with the following simple example, a weakened form of a result established by Guth and Katz \cite{distdist}:

\begin{theorem}
For any $\varepsilon>0$, there is a degree $D$ so that the following is true. Suppose $\mathcal{L}$ is a set of $L$ distinct lines in $\mathbb{R}^3$ such that at most $S$ lines lie in any degree $D$ algebraic surface. Then the set of intersection points of lines in $\mathcal{L}$, $I$, satisfies
$$|I|\leq C(\varepsilon,S)L^{3/2+\varepsilon}.$$
\end{theorem}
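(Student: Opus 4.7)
The plan is to proceed by strong induction on $L$. For $L$ below a threshold depending on $\varepsilon$ and $S$, the trivial bound $|I|\leq\binom{L}{2}$ handles the base case. For the inductive step I would apply the polynomial partitioning theorem stated above to the intersection point set $I$ with the degree parameter $D$ from the hypothesis, producing a polynomial $P$ of degree $\leq D$ so that $\mathbb{R}^3\setminus Z(P)$ decomposes into $\lesssim D^3$ open cells $O_i$, each containing $\lesssim |I|/D^3$ points of $I$. I would then dichotomize based on whether at least half of $I$ lies on $Z(P)$ (the algebraic case) or inside the cells (the cellular case).

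The algebraic case is short: by hypothesis at most $S$ lines of $\mathcal{L}$ lie in $Z(P)$, contributing $\leq\binom{S}{2}$ intersection points among themselves, while each remaining line meets $Z(P)$ in $\leq D$ points by B\'ezout's theorem, contributing $\leq LD$ more points. This gives $|I|\leq 2(LD+\binom{S}{2})$, which is absorbed into $C(\varepsilon,S)L^{3/2+\varepsilon}$ once $L$ is sufficiently large. In the cellular case, writing $L_i$ for the number of lines of $\mathcal{L}$ entering $O_i$, each line passes through at most $D+1$ cells, giving the line-count bound $\sum_i L_i\leq L(D+1)$. Since any sub-collection $\mathcal{L}_i$ inherits the ``at most $S$ lines in a degree-$D$ surface'' hypothesis, induction would give $|I\cap O_i|\leq CL_i^{3/2+\varepsilon}$ for each cell.

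The main obstacle is bounding $\sum_i L_i^{3/2+\varepsilon}$ sharply enough to close the induction: a naive H\"older estimate using only $\sum L_i\leq L(D+1)$ and $L_i\leq L$ produces a factor-$D$ loss, and a single cell with $L_i\approx L$ defeats the induction entirely. To resolve this, I plan to threshold the cells at some level $T$: cells with $L_i\leq T$ are handled by induction, contributing $\lesssim CT^{1/2+\varepsilon}LD$, while cells with $L_i>T$ number at most $LD/T$ and are handled instead by the partitioning bound $|I\cap O_i|\lesssim|I|/D^3$, contributing $\lesssim L|I|/(D^2 T)$. Choosing $T$ so that $CT^{3/2+\varepsilon}\sim|I|/D^3$ balances the two regimes, and solving for $|I|$ yields a bound of the form $C_\varepsilon L^{3/2+\varepsilon}D^{-2\varepsilon}$; taking $D=D(\varepsilon)$ sufficiently large then produces the geometric gain needed to close the induction. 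Carrying out this threshold optimization carefully, and verifying that the constants can be absorbed into $C(\varepsilon,S)$, is where I expect all the technical work to lie.
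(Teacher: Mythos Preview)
Your proposal is correct. The algebraic case matches the paper essentially verbatim. In the cellular case you take a somewhat different route: you sum the inductive bound over all cells and introduce a threshold $T$ on the line count $L_i$ to tame $\sum_i L_i^{3/2+\varepsilon}$, using the partitioning upper bound $|I\cap O_i|\lesssim |I|/D^3$ on the heavy cells; the balancing you describe does indeed yield the $D^{-2\varepsilon}$ gain needed to close (and in fact one can skip the optimization by simply taking $T\sim L/D^2$, which already makes the heavy-cell term absorbable and gives $|I|\lesssim C(L/D^2)^{1/2+\varepsilon}LD=CL^{3/2+\varepsilon}D^{-2\varepsilon}$). The paper instead avoids summing entirely. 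It first observes that in the cellular case there are $\gtrsim D^3$ ``rich'' cells each containing $\sim D^{-3}|I|$ points, and then pigeonholes the constraint $\sum_i L_i\leq (D+1)L$ over just those rich cells to locate a \emph{single} cell $O$ with $L_O\lesssim D^{-2}L$. Applying induction to that one cell gives $|I|\lesssim D^3|I\cap O|\leq D^3\,C(\varepsilon,S)(D^{-2}L)^{3/2+\varepsilon}=C(\varepsilon,S)D^{-2\varepsilon}L^{3/2+\varepsilon}$ in one line. Your threshold argument is a legitimate alternative and has the mild advantage of only needing the one-sided upper bound from partitioning, but the paper's double-pigeonhole is considerably shorter and eliminates precisely the step you flag as ``where I expect all the technical work to lie.''
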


\textit{Remark.} The assumption that lines do not cluster in algebraic surfaces is necessary. For instance, if we allow all the lines to lie in a plane, the best possible bound is $\sim L^2$, since then we could take $\sim L/2$ lines parallel to the $x$ axis and $\sim L/2$ lines parallel to the $y$ axis, so every line would intersect every other in a distinct point.

\begin{proof}
Fix $D$ to be chosen later. Apply the partitioning theorem to select $P$ of degree $D$ dividing $\mathbb{R}^3$ into cells $O_i$ containing $\lesssim D^{-3}|I|$ points each.

If the majority of points in $I$ lie in $Z(P)$, then it suffices to estimate the size of $I\cap Z(P)$. Let $\mathcal{L}_Z$ be the set of those lines in $\mathcal{L}$ contained in $Z(P)$.
By assumption, there are at most $S$ lines in $\mathcal{L}_Z$, so the contribution to $I\cap Z(P)$ from pairs of such lines is at most $|\mathcal{L}_Z\times\mathcal{L}_Z|\leq S^2$.
Also, every line in $\mathcal{L}\setminus\mathcal{L}_Z$ intersects $Z(P)$ in at most $D$ points, so these lines contribute at most $DL$ points. Thus $|I\cap Z(P)|\leq DL+S^2$. This is bounded by $C(\varepsilon,S)L^{3/2+\varepsilon}$ if we choose $C(\varepsilon,S)$ large enough.

The alternative is that the cells contain the majority of $I$. In this case we use induction on $L$. We must then have that $\sim D^3$ cells contain $\sim D^{-3}|I|$ points of $I$. Since every line intersects $Z(P)$ at most $D$ times, each enters at most $D+1$ of the cells.

Counting pairs of cells and lines which have the property that the line intersects the cell, we see that there must be one that intersects at most $\lesssim D^{-2}L$ of the lines. Fix one such cell $O$.

Considering only the lines intersecting this $O$ -- call this collection $\mathcal{J}$ -- and the corresponding intersection set $J$ of lines in $\mathcal{J}$, we have by induction hypothesis
$$|I|\lesssim D^3|J|\leq D^3C(\varepsilon,S)|\mathcal{J}|^{3/2+\varepsilon}\lesssim D^3C(\varepsilon,S)(D^{-2}L)^{3/2+\varepsilon}.$$
The induction closes once we choose $D$ so that the implicit constant $C$ satisfies $CD^{-2\varepsilon}<1$.
\end{proof}

Observe that we estimated $|I|$ by partitioning $I$ and studying three types of contribution:
\begin{itemize}
\item Contributions from lines in $Z(P)$
\item Contributions from lines crossing $Z(P)$
\item Contributions from each of the $D^3$ cells, which we handle by induction.
\end{itemize}

This scheme is morally the same as how polynomial partitioning will be used in the proof of the main theorem. Naturally, we will need to replace cardinalities with measures, lines with tubes, and instead of lines lying in or crossing algebraic varieties, we will consider tubes intersecting approximately tangentially and transversally in a neighborhood of an algebraic variety.

\subsection{Overview of the argument: Initial reductions}
In this section, we give a high-level overview of the argument in \cite{restest}. The details are fully elaborated in much of Guth's paper (or otherwise routine) so it is not worth reviewing as such here. However, it may be helpful to give a zoomed-out view of the argument, and motivate each step.

Firstly, by splitting $S$ into pieces and rotating and performing a parabolic rescaling on each, Guth is able to reduce to the case where $S$ is a sufficiently small pertubation (up to order $C^L$) of a compact piece of the graph of a paraboloid. Then, by Tao's epsilon-removal theorem (see \cite{brrest}, also \cite{remarks} for relevant comments), the desired estimates reduce to establishing
$$\|E_Sf\|_{L^p(B_R)}\leq C(p,\varepsilon)R^\varepsilon\|f\|_{L^\infty(S)}$$
for some small $\varepsilon>0$. From now on, we shall write $Ef$ for $E_Sf$.

The next reduction is the reduction to an estimate for broad points. Suppose that we have divided our surface into caps $\tau$, and let $\alpha\in(0,1)$. Let $f_\tau:=f\chi_\tau$. We call a point $x$ $\alpha$-broad for $Ef$ if
$$\max_\tau|Ef_\tau(x)|\leq\alpha|Ef(x)|.$$
For notational convenience, we define the ``broad part" $\text{Br}_\alpha Ef(x)$ to be $|Ef(x)|$ when $x$ is $\alpha$-broad and $0$ otherwise. If $x$ is not $\alpha$-broad, then there is $\tau$ such that $|Ef(x)|\leq \alpha |Ef_\tau(x)|$.

Supposing we have a sufficiently ``good" estimate on the broad part, the argument then proceeds by a routine induction on $R$. We will estimate separately the contribution from the broad part and the $Ef_\tau$, and to handle the latter we will use a parabolic rescaling/change of variables to reduce to the estimate on a smaller ball. The number of caps (as well as their size), and the number $\alpha$, will appear in the resulting expression and in the argument these parameters are such that the induction closes.

The required broad part estimate is the following theorem:

\begin{theorem}\label{broadest}
For any $\varepsilon>0$, there exists $K=K(\varepsilon)$ and $L=L(\varepsilon)$ with $K(\varepsilon)\rightarrow\infty$ as $\varepsilon\rightarrow 0$ such that the following holds. Suppose that $S$ is a $C^L$ perturbation of a paraboloid (as described above) and is partitioned into caps $\tau$ of diameter $\sim K^{-1}$. Then, for any radius $R\geq 1$,
$$\|\text{Br}_{K^{-\varepsilon}} Ef\|_{L^{3.25}(B_R)}\leq C_\varepsilon R^\varepsilon \|f\|_2^{12/13}\|f\|_\infty^{1/13}.$$
\end{theorem}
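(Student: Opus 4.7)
The plan is to prove the broad estimate by a double induction: on the radius $R$ and on a secondary parameter measuring the total $L^2$ mass of $f$ in units of the $L^\infty$ norm. First I would perform a wave packet decomposition of $Ef$ at scale $R^{1/2+\delta}$, writing $Ef = \sum_T Ef_T$, where $T$ ranges over tubes of radius $R^{1/2+\delta}$ and length $R$, each associated to an $R^{-1/2-\delta}$-cap on $S$ (a refinement of the given $K^{-1}$-caps). Each wave packet $Ef_T$ is essentially supported on $T$ and satisfies $\|f_T\|_\infty \leq \|f\|_\infty$; the number of nontrivial packets is controlled by $\|f\|_2^2$. The exponent $12/13$ on $\|f\|_2$ (and $1/13$ on $\|f\|_\infty$) is chosen so that the induction on the number of packets balances the gain per cell from polynomial partitioning at exponent $3.25$; this is the numerology that forces $p = 13/4$.

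With the wave packet machinery in place, I would apply the polynomial partitioning theorem to the function $|\mathrm{Br}_{K^{-\varepsilon}}Ef|^{3.25}$ (restricted to the level set where it is comparable to its average), choosing degree $D = D(\varepsilon)$ depending on $\varepsilon$ but not on $R$. This yields a polynomial $P$ whose zero set $Z(P)$ divides $B_R$ into $\sim D^3$ cells $O_i$ each carrying an even share of the $L^{3.25}$ norm, plus a neighborhood $W$ of $Z(P)$ at scale $R^{1/2+\delta}$. Either the cells or $W$ holds a majority of the norm. In the \emph{cellular} case, by Bezout each tube $T$ enters only $\lesssim D$ cells, so by pigeonhole most cells meet only $\lesssim D^{-2}$ of the tubes. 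Applying the inductive hypothesis to $\sum_{T : T\cap O_i \ne \emptyset} Ef_T$ on each $O_i$ and summing produces a gain $\sim D^{3 - 3.25 \cdot (12/13) \cdot 2 + \text{lower order}}$, which is $<1$ for $D$ large, closing this case.

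In the \emph{wall} case, I would split the tubes $T$ meeting $W$ into \emph{transverse} tubes (crossing $Z(P)$ at large angle) and \emph{tangential} tubes (essentially contained in $W$). For transverse tubes, each one meets $W$ in $\lesssim D$ short segments, and a counting argument analogous to the cellular one places them into $\lesssim D$ subregions, each again admitting the inductive hypothesis with fewer tubes per region, producing a matching gain. The tangential tubes are handled by a separate tangential estimate proved by Guth earlier in the paper: the key fact is that a degree-$D$ variety in $\mathbb{R}^3$ admits a transverse equidistribution property, so the number of tangential tubes in any small ball cannot concentrate in more than a few directions. Combined with the $K^{-\varepsilon}$-broadness hypothesis, which forces the significant contributions to come from $\gtrsim K^{\varepsilon}$ distinct $K^{-1}$-caps at each broad point, a Córdoba-type $L^4$ / bilinear argument among well-separated tangential caps yields a bound that, at exponent $3.25$, beats the trivial bound by a power of $K^{-\varepsilon}$.

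The main obstacle is the tangential case: it is the only case that does not use induction on $R$, and all the geometric input about $S$ (positive curvature, the $C^L$ regularity, the choice of cap diameter $K^{-1}$, and the role of the broadness parameter $K^{-\varepsilon}$) is funneled through it. Matching the gains in the three cases so that the induction on $R$ closes with only an $R^\varepsilon$ loss requires a delicate choice of the three parameters $D$, $\delta$, and the broadness threshold, as well as verifying that the ``lower order'' error terms (negligible contributions of tubes far from $W$, Schwartz tails of wave packets, etc.) are actually absorbable into $C_\varepsilon R^\varepsilon$. Once the tangential estimate is in hand, the remainder of the argument is a careful bookkeeping exercise in the standard polynomial partitioning framework outlined in the linear example above.
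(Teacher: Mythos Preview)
Your outline captures the correct high-level architecture (wave packets, polynomial partitioning, cellular/transverse/tangential trichotomy, bilinear C\'ordoba argument for the tangential piece), and the numerology you describe for the cellular gain is essentially right. However, there are two genuine gaps that would prevent the induction from closing as you have described it.

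\textbf{The broadness parameter is not static.} When you pass from $Ef$ to $Ef_i$ in the cellular case (or to $Ef_{I,j,trans}$ in the transverse case), an $\alpha$-broad point for $Ef$ is only a $2\alpha$-broad point for the new function; the wave-packet error terms force this doubling. You therefore cannot induct directly on the statement of Theorem~\ref{broadest} with fixed broadness threshold $K^{-\varepsilon}$. The paper resolves this by proving a more general statement (Theorem~\ref{induction}) in which $\alpha$ is a free parameter and the right-hand side carries an extra factor $R^{\delta_{trans}\log(K^\varepsilon\alpha\mu)}$ that absorbs the accumulated doublings. This is not mere bookkeeping: without such a mechanism the iterated cellular steps drive $\alpha$ above $1$ and the broad norm becomes vacuous.

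\textbf{The degree cannot be a constant independent of $R$.} With $D$ constant, each cellular step reduces the $L^2$ mass by a fixed factor $D^{-2}$, so reaching the base case (mass $\leq R^{-1000}$) takes $\sim \log R$ iterations and hence $\alpha$ is doubled $\sim \log R$ times, growing like a power of $R$. The paper takes $D = R^{\delta_{deg}}$ with $\delta_{deg}\sim\varepsilon^4$, so only $O(\delta_{deg}^{-1})$ cellular iterations are ever needed and $\alpha$ stays bounded. Your description of the transverse case is also off: the gain there does \emph{not} come from ``fewer tubes per region'' but from induction on the radius---one covers $B_R$ by balls $B_j$ of radius $R^{1-\delta}$, applies the inductive hypothesis at the smaller scale, and the saving $R^{\varepsilon}\to R^{(1-\delta)\varepsilon}$ beats the loss $D^{O(1)}=R^{O(\delta_{deg})}$ from Lemma~\ref{lem: transverse} precisely because $\delta_{deg}\ll\delta\varepsilon$. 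Finally, the tangential input in this paper is the cap-counting Lemma~\ref{lem: tangent} (proved via Wolff's hairbrush argument and Wongkew's volume bound), not a transverse equidistribution property; the latter appears only in the higher-dimensional sequel.
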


We will in fact prove something slightly more general, which is set up in an appropriate inductive framework inspired by work of Solymosi and Tao \cite{highdiminc}. We will assume instead that the caps $\tau$ are graphs over balls $B(\omega_\tau,r)$ which are now allowed to overlap, but have $K^{-1}$-separated centres, and that $f$ decomposes as a sum of functions $f_\tau$ which are supported in $\tau$. We define the multiplicity $\mu$ of the cover by saying that $r\in[K^{-1},\mu^{1/2}K^{-1}]$, from which it follows that each $x$ lies in at most $O(\mu)$ caps $\tau$.

\begin{theorem}\label{induction}
For each $\varepsilon>0$, there exist $K=K(\varepsilon)$, $L=L(\varepsilon)$ and $\delta_{trans}=\delta_{trans}(\varepsilon)\in(0,\varepsilon)$ with $K(\varepsilon)\rightarrow\infty$ as $\varepsilon\rightarrow 0$ so that for every $S$ which is a $C^L$ perturbation of a paraboloid (in a particular sense), and the caps $\tau$ cover $S$ with multiplicity at most $\mu$, and $f_\tau$ are as above, the following holds.

Suppose $\alpha\geq K^{-\varepsilon}$, and that for every $\omega\in S$ and any cap $\tau$, we have the averaged integrals
$$\oint_{B(\omega,R^{-1/2})\cap S}|f_\tau|^2\leq 1.$$
Then
$$\int_{B_R}(\text{Br}_\alpha Ef)^{3.25}\leq C_\varepsilon R^\varepsilon\left(\sum_\tau \int_S|f_\tau|^2\right)^{(3/2)+\varepsilon}R^{\delta_{trans}\log(K^\varepsilon\alpha\mu)}.$$
\end{theorem}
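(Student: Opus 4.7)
The plan is to mirror the strategy illustrated in the preceding warm-up for line incidences: decompose the extension into wave packets, apply polynomial partitioning to $(\text{Br}_\alpha Ef)^{3.25}$ on $B_R$, and close by induction. The induction is a strong one on the scale $R$, on the total number of wave packets, and on the multiplicity $\mu$, with $K,\delta_{trans}$ fixed in terms of $\varepsilon$ at the outset and the partitioning degree $D$ chosen sufficiently large depending on $\varepsilon$ at the end.

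The first step is a standard wave packet decomposition at scale $R^{1/2}$: write $Ef_\tau = \sum_T Ef_{T,\tau}$ where each $T$ is a tube of thickness $R^{1/2+\delta}$ and length $R$ oriented along the normal to $S$ at the center of $\tau$, and the functions $f_{T,\tau}$ are essentially orthogonal in $L^2$. The averaged $L^2$ hypothesis controls $\sum_T \|f_{T,\tau}\|_2^2 \lesssim \sum_\tau \int_S|f_\tau|^2$, which is what drives the right-hand side. Next, apply polynomial partitioning to the density $(\text{Br}_\alpha Ef)^{3.25}\chi_{B_R}$ to obtain a polynomial $P$ of degree $D$ whose zero set divides $B_R\setminus N$ into $\sim D^3$ cells $O_i$ each carrying an equal share of the $L^{3.25}$ mass, where $N$ is an $R^{1/2+\delta}$-neighborhood of $Z(P)$. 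Either the cells carry most of the mass (\emph{cellular case}) or $N$ does (\emph{algebraic case}).

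In the cellular case each tube enters at most $D+1$ cells, so by double-counting most cells see at most a $\sim D^{-2}$ fraction of the total $L^2$ energy from wave packets passing through them. Applying the inductive hypothesis on each $O_i$ (at the same multiplicity $\mu$, same $\alpha$, with reduced energy) and summing the $\sim D^3$ contributions produces a gain of $D^{\,3-2((3/2)+\varepsilon)} = D^{-2\varepsilon}$, which closes the cellular sub-case once $D$ is chosen large. Here the exponent $3.25 > 3$ is exactly what ensures that the cellular sum gives the correct sign of the gain.

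In the algebraic case, separate the wave packets into transverse ones (those with $T\cap B_R\not\subseteq N$) and tangential ones. A transverse tube meets $N$ in $\lesssim D$ segments of length $R^{1-\eta}$; recursing on each segment with a slightly larger $\alpha$ or $\mu$ generates the $R^{\delta_{trans}\log(K^\varepsilon\alpha\mu)}$ factor after tracking the cumulative logarithmic cost of the recursive calls. The hard part, and the real heart of the argument, is the tangential case: one must control $\int_N(\text{Br}_\alpha Ef)^{3.25}$ when the contributing tubes are concentrated in an $R^{1/2+\delta}$-neighborhood of a degree $D$ two-dimensional variety. Here one uses the $\alpha$-broadness to pay an $\alpha^{-C}$ loss and pass to an essentially bilinear or trilinear setting, combined with the fact that tangential caps effectively cover a lower-dimensional set and therefore behave as caps with larger multiplicity $\mu$ but smaller effective energy; the combinatorics of how many tangential tubes can concentrate near $Z(P)$, together with the numerology of the exponent $3.25 = 13/4$ and the $L^2$-normalization built into the hypothesis, is precisely what pins the threshold at $p > 3.25$ and forces $\mu$ to appear in the final bound.
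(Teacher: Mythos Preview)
Your high-level architecture is right and matches Guth's: wave packets, polynomial partitioning of $(\text{Br}_\alpha Ef)^{3.25}\chi_{B_R}$, cellular versus algebraic, and within the algebraic case a further split into transverse and tangential tubes on balls $B_j$ of radius $R^{1-\delta}$. The cellular bookkeeping (each tube in $\le D+1$ cells, gain $D^{-2\varepsilon}$) and the transverse mechanism (each tube in $\mathbb T_{j,\mathrm{trans}}$ for at most $D^{O(1)}$ values of $j$, then induct at scale $R^{1-\delta}$ with $2\alpha$ and slightly larger $\mu$) are essentially what the paper does.

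The gap is in the tangential case, which you describe as another inductive step with ``larger multiplicity $\mu$ but smaller effective energy''. That is not how the argument runs, and an inductive approach here would not close. The tangential contribution is handled \emph{directly}, with no recursion: broadness is used to pass to the bilinear quantity $\mathrm{Bil}(Ef_{j,\mathrm{tang}})=\sum_{\tau_1,\tau_2\text{ non-adj.}}|Ef_{\tau_1,j,\mathrm{tang}}|^{1/2}|Ef_{\tau_2,j,\mathrm{tang}}|^{1/2}$, and then one runs a C\'ordoba-type $L^4$ argument interpolated with the trivial $L^2$ bound to get
\[
\|\mathrm{Bil}(Ef_{j,\mathrm{tang}})\|_{L^p(B_j\cap W)}^p \lesssim R^{O(\delta)}R^{\frac52-\frac{3p}{4}}\Bigl(\sum_\tau\|f_{\tau,j,\mathrm{tang}}\|_2^2\Bigr)^{p/2}.
\]
To convert the exponent $p/2$ into $3/2$ at $p=3.25$ one needs the geometric input you only gesture at: the number of $R^{-1/2}$-caps $\theta$ with $\mathbb T_{j,\mathrm{tang}}\cap\mathbb T(\theta)\neq\emptyset$ is $\lesssim R^{1/2+O(\delta)}$. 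This is proved by a Wolff hairbrush argument combined with Wongkew's estimate on volumes of neighborhoods of varieties, and it is precisely this $R^{1/2}$ (rather than the trivial $R$) that fixes the threshold $3.25$. The parameter $\mu$ plays no role in the tangential estimate; its presence in the final bound comes entirely from the doubling $\alpha\mapsto 2\alpha$ and the slight enlargement of the $\tau$-supports in the cellular and transverse inductive steps, not from anything tangential.
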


The previous broad estimate follows trivially. Note that we use local $L^2$ averages instead of an $L^\infty$ norm for the induction to work.

Now, to establish this estimate, we introduce a wave packet decomposition in a fairly routine way. This wave packet decomposition will be associated to a family of tubes, and these tubes will be constructed to be adapted to the surface $S$ in a natural way, and the curvature of $S$ will then force these tubes to point in different directions. Leveraging this, along with polynomial partitioning to control their incidences/overlaps, we will deduce the desired bounds.

\subsection{Overview: The wave packet decomposition}\label{wavepacket}
We decompose $S$ into caps $\theta$ which are the pieces of the graph of a function $h$ over balls $B(\omega_\theta,r)$ as above, except with radii $R^{-1/2}$. By $m\theta$ we will denote the enlarged cap given as the graph over $B(\omega_\theta,mr)$. For a cap $\theta$, denote by $v_\theta$ the unit normal to $S$ at $h(\omega_\theta)$.

Let $\delta>0$ and let $\mathbb{T}(\theta)$ be a finitely overlapping collection of cylindrical tubes with central axis parallel to $v_\theta$, length $\sim R$ and radius $R^{1/2+\delta}$, covering $B_R$. Write $\mathbb{T}=\cup_\theta\mathbb{T}(\theta)$.

By $\theta'$ we will denote a subset of $\theta$ such that the union of the $\theta'$ is $S$ and the $\theta'$ are disjoint. Denote by $f_{\theta}$ the restriction to $\theta'$. 

We can view $S\cap 5\theta$ as a graph by choosing coordinates such that $v_\theta$ is parallel to the third coordinate direction. We can thus think of functions on $S\cap 5\theta$ as functions on $\mathbb{R}^2$ by using projection onto the first coordinates as a change of variable, and vice versa. We shall denote by $F_\theta$ the function on $\mathbb{R}^2$ corresponding to $f_\theta$ on $S\cap 5\theta$.

Let $\tilde{\mathbb{T}}(\theta)$ denote a boundedly overlapping collection of tubes parallel to $v_\theta$ extending $\mathbb{T}(\theta)$ and covering all of $\mathbb{R}^2$. Projecting the tubes $T\in\tilde{\mathbb{T}}(\theta)$ onto the first two coordinates gives balls $B$ covering the ball of radius $R$ in $\mathbb{R}^2$.

We take a partition of unity subordinate to these balls, and denote the piece corresponding to $T$ by $\phi_T$. Doing this in a standard way, we may also assume by the uncertainty principle that
$$|\hat{\phi}_T(\omega)|\lesssim \text{Area}(B)(1+R^{1/2+\delta}|\omega|)^{10^6}.$$
Also, we have
$$F_\theta=\sum_{T\in\tilde{\mathbb{T}}(\theta)}\hat{\phi}_T*F_\theta.$$
We also take a smooth cutoff $\psi_\theta$ equal to $1$ on $2\theta$ and supported in $3\theta$, and we use $\Psi_\theta$ to denote its projection to $\mathbb{R}^2$ using the change of variable associated to $v_\theta$.

Because $F_\theta$ is supported in a ball of radius $R^{-1/2}$, and $|\hat{\phi}_T|$ decays rapidly outside of $|\omega|\leq R^{-1/2-\delta}$, the contribution to $\hat{\phi}_T*F_\theta$ from outside of the projection of $2\theta$ is negligible, so in practice we can replace $\hat{\phi}_T*F_\theta$ with $\Psi_\theta(\omega)(\hat{\phi}_T*F_\theta)(\omega)$ (we can routinely bound the contributions from the remaining part). We change variables back to transfer $\Psi_\theta(\omega)(\hat{\phi}_T*F_\theta)(\omega)$ to a function on $S\cap 5\theta$, which we call $f_T$.

When analyzing $Ef_\theta(x)=\sum_{T\in\tilde{\mathbb{T}}(\theta)}Ef_T(x)$ for $x$ in the ball $B_R$, we may also routinely bound the contribution from the tubes $T\in\tilde{\mathbb{T}}(\theta)\setminus\mathbb{T}(\theta)$, as $Ef_T$ decays rapidly as $x$ moves away from $T$.

We shall work with the $f_T$ for $T\in\mathbb{T}=\cup_\theta \mathbb{T}_\theta$. We summarize the essential properties of the $f_T$, which are easily established.

\begin{itemize}
\item If $T\in\mathbb{T}_\theta$, then $f_T$ is supported in $3\theta$.
\item If $x\in B_R\setminus T$, then $|Ef_T(x)|\leq R^{-1000}\|f\|_2$.
\item For any $x\in B_R$, $|Ef(x)-\sum_{T\in\mathbb{T}}Ef_T(x)|\leq R^{-1000}\|f\|_2$.
\item (Essential orthogonality) If $T_1,T_2\in\mathbb{T}(\theta)$ are disjoint, then
$\int f_{T_1}\bar{f}_{T_2}\leq R^{-1000}\int_\theta|f|^2$. 
\item $\sum_{T\in\mathbb{T}(\theta)}\int_S|f_T|^2\lesssim\int_\theta|f|^2$.
\end{itemize}

We will use this to (approximately) decompose each $f_\tau$ into $f_{\tau,T}$. These parts will be grouped according to properties of the associated tube and studied independently (in practice this grouping will not be disjoint, but the above properties mean that this essentially contributes a factor of the maximal number of groups each tube lies in, which we shall control using incidence arguments when we apply polynomial partitioning).

\subsection{Setting up polynomial partitioning}
We first establish the precise polynomial partitioning theorem we shall use. To begin, recall the Borsuk-Ulam Theorem \cite{borsukulam}:

\begin{theorem}
Let $F:S^N\rightarrow\mathbb{R}^N$ be continuous and odd. Then $\exists v\in S^N$ with $F(v)=0$.
\end{theorem}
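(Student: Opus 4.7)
The plan is to argue by contradiction and reduce the problem to a purely combinatorial statement, \emph{Tucker's lemma}. Assume for contradiction that $F(v) \neq 0$ for every $v \in S^N$; then $G := F/|F|$ is a continuous odd map $S^N \to S^{N-1}$, and it suffices to show that no such $G$ exists.

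The combinatorial ingredient we invoke is \emph{Tucker's lemma}: for every centrally symmetric triangulation $\mathcal{T}$ of $S^N$ (so $-\sigma \in \mathcal{T}$ whenever $\sigma \in \mathcal{T}$) and every antipodal labeling $\lambda: V(\mathcal{T}) \to \{\pm 1, \ldots, \pm N\}$ (meaning $\lambda(-v) = -\lambda(v)$), some $1$-simplex of $\mathcal{T}$ has complementary labels $\{+k, -k\}$. To exploit this, fix a centrally symmetric triangulation of mesh $\eta$ (to be determined) and label each vertex $v$ by $\lambda(v) = \mathrm{sgn}(G_{k(v)}(v)) \cdot k(v)$, where $k(v) \in \{1, \ldots, N\}$ is an index maximizing $|G_i(v)|$ with ties broken by smallest index. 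Oddness of $G$ makes $\lambda$ antipodal, so Tucker's lemma supplies adjacent vertices $v_1, v_2$ with labels $+k$ and $-k$. Both $|G_k(v_1)|$ and $|G_k(v_2)|$ exceed $1/\sqrt{N}$ (as $|G(v_i)| = 1$ and $G_k$ is the largest coordinate at each endpoint), with opposite signs, forcing $|G(v_1) - G(v_2)| \geq 2/\sqrt{N}$. Uniform continuity of $G$ on the compact $S^N$ renders this impossible once $\eta$ is chosen smaller than the corresponding modulus of continuity.

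The main obstacle is establishing Tucker's lemma itself. I would proceed by induction on $N$ via a Sperner-style parity argument. Assuming toward contradiction that no edge is labeled $\{+k, -k\}$, count ordered pairs consisting of a maximal simplex together with a ``half-complementary'' facet carrying labels $\{\pm 1, \ldots, \pm(N-1), -N\}$. Each such facet lies in at most two maximal simplices (in exactly two if interior, one if on the hemispheric boundary), and each maximal simplex contributes a controlled number to the count. The antipodal symmetry allows one to restrict attention to a fundamental domain, for instance a closed upper hemisphere, whose boundary triangulation is a centrally symmetric triangulation of $S^{N-1}$ on which the induction hypothesis controls the parity of boundary facets of this type. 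The parity mismatch forced by the absence of any $\{+k, -k\}$ edge then yields the contradiction. The combinatorial bookkeeping is delicate, but uses only elementary double counting and the inductive hypothesis; no algebraic topology is required.
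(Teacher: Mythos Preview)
The paper does not supply its own proof of this theorem; Borsuk--Ulam is quoted as a black box with a citation to \cite{borsukulam} (Matou\v{s}ek's book), accompanied by the remark that the result ``can be understood without the use of algebraic topology.'' Your combinatorial route via Tucker's lemma is exactly the elementary proof that remark alludes to, and it is the one developed in that reference. The reduction you give---pass to the contrapositive, normalise to an odd continuous $G:S^N\to S^{N-1}$, label each vertex of a fine symmetric triangulation by the signed index of its dominant coordinate, apply Tucker to obtain a complementary edge, and contradict uniform continuity since both endpoints have $k$-th coordinate of modulus at least $1/\sqrt{N}$ with opposite signs---is correct and standard.

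One small point in your Tucker sketch deserves a word. To run the hemispheric parity count you implicitly need the triangulation to refine a decomposition of $S^N$ into two balls along an equatorial $S^{N-1}$; an arbitrary centrally symmetric triangulation need not do this. The usual fix is either to state and prove Tucker first for the ball $B^N$ with an antipodal boundary labeling (where the induction is cleanest) and then deduce the spherical version, or to pass to a symmetric refinement that respects a chosen equator. Either repair is routine, and the rest of your parity/double-counting outline is the standard argument. Your description of the ``half-complementary'' facets is a bit imprecise---in the usual proof one tracks $(N{-}1)$-faces whose label set is $\{\pm 1,\ldots,\pm(N{-}1)\}$-alternating together with a single $\pm N$---but the shape of the induction is right.
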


Given $L^1$ functions $W_1,\ldots,W_N$ and a vector space $V$ of polynomials having dimension $N+1$, the function $F$ with $j^{\text{th}}$ component $F_j$ defined by
$$F_j(v)=\int_{v>0}W_j-\int_{v<0}W_j$$
is easily checked to be continuous on $V\setminus\{0\}$, so we may apply Borsuk-Ulam to find a polynomial $v\in V$ which equally with the property that
$$\int_{v>0}W_j=\int_{v<0}W_j$$
for every $j$. Since the dimension of the space of polynomials of degree $D$ is $\sim_n D^n$, it follows that we can fine a polynomial of degree $\sim_n N^{1/n}$ with this property.

Applying this result inductively to the same function, we obtain the following:

\begin{theorem}
Let $W\geq 0$ be a non-zero $L^1$ function on $\mathbb{R}^n$. Then for each $D\geq 1$ there is a polynomial of degree $D$ such that $\mathbb{R}^n\setminus Z(P)$ is a union of $\sim_n D^n$ open sets $O_i$ with $\int_{O_i}W=\int_{O_j}W$ for each $i,j$.
\end{theorem}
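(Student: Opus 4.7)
The plan is to apply the preceding consequence of Borsuk--Ulam iteratively, producing polynomials $P_1,\ldots,P_k$ whose product $P := P_1 P_2 \cdots P_k$ witnesses the claim, with the cells $O_i$ being the sign-pattern cells of $(P_1,\ldots,P_k)$.

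Concretely, I would argue by induction on the step $j$ with the invariant that after step $j-1$ we have a collection of $2^{j-1}$ open sign-pattern cells $\{C_\sigma\}_{\sigma \in \{\pm\}^{j-1}}$ of $(P_1,\ldots,P_{j-1})$, each carrying
$$\int_{C_\sigma} W \;=\; 2^{-(j-1)} \int_{\mathbb{R}^n} W.$$
At step $j$ I apply the preceding lemma to the $N = 2^{j-1}$ weights $W\1_{C_\sigma}$; this produces a polynomial $P_j$ of degree $\lesssim_n N^{1/n} = 2^{(j-1)/n}$ that simultaneously bisects all of them. Refining by the sign of $P_j$ then yields $2^j$ open sign-pattern cells of $(P_1,\ldots,P_j)$ each of $W$-mass $2^{-j}\int W$, closing the induction. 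Note that since at each step every cell inherits a strictly positive share of $\int W$, no cell can be empty (it must have positive Lebesgue measure because $W\in L^1$).

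Choose $k$ so that $2^{k/n} \sim D$, i.e.\ $k \sim n\log_2 D$. Then the total degree of $P$ is controlled by a geometric sum:
$$\deg P \;=\; \sum_{j=1}^{k}\deg P_j \;\lesssim_n\; \sum_{j=1}^{k} 2^{(j-1)/n} \;\lesssim_n\; 2^{k/n} \;\sim\; D,$$
where the penultimate inequality uses $2^{1/n}-1 \gtrsim 1/n$. The $2^k \sim_n D^n$ sign-pattern cells of $(P_1,\ldots,P_k)$ are nonempty, open, and partition $\mathbb{R}^n\setminus Z(P)$ (since any $x$ with $P(x)\neq 0$ has a well-defined nonzero sign of each $P_j$), with common $W$-integral $2^{-k}\int W$.

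No step is genuinely difficult: the combinatorial efficiency is encoded entirely in the bisection lemma, and the inductive bookkeeping is routine. The only minor point requiring care is matching $\deg P$ to exactly $D$ rather than $\lesssim_n D$; this is harmless because the implicit dimensional constants can be absorbed into the $\sim_n D^n$ notation in the conclusion (alternatively, one rescales $k$ downward by a factor depending on $n$ and accepts the corresponding constant in the cell count).
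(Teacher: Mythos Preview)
Your proposal is correct and follows essentially the same approach as the paper, which merely says ``Applying this result inductively to the same function'' without spelling out the details. You have filled in exactly the intended argument: iterated simultaneous bisection of the current sign-pattern cells, with the geometric-series degree bound, and your handling of the minor bookkeeping (nonemptiness of cells, degree $\lesssim_n D$ versus $D$) is appropriate.
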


In practice, we may use density to assume that $P$ is non-singular, in the sense that $\nabla P\neq 0$ on $Z(P)$, provided we instead assume that $\int_{v>0}W_j$ and $\int_{v<0}W_j$ are comparable (with an implicit constants can be made arbitrarily close to $1$). We obtain the following variant of the theorem:

\begin{theorem}
Let $W\geq 0$ be a non-zero $L^1$ function on $\mathbb{R}^n$. Then for each $D\geq 1$ there is a polynomial of degree $D$ which is a product of non-singular polynomials such that $\mathbb{R}^n\setminus Z(P)$ is a union of $\sim_n D^n$ open sets $O_i$ with $\int_{O_i}W\sim\int_{O_j}W$ for each $i,j$.
\end{theorem}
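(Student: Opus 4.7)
The plan is to revisit the iterative construction that proves the previous (exact-bisection) theorem, and perturb the polynomial obtained at each Borsuk--Ulam step so that it becomes non-singular. In exchange for replacing equal $W$-masses by comparable ones, we arrange that every factor in the resulting product has non-vanishing gradient on its zero set.

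First I would recall that the previous theorem is obtained by iterating the single-step Borsuk--Ulam bisection $O(\log D)$ times. At stage $j$ we have roughly $2^{j-1}$ cells $\Omega_1,\dots,\Omega_{2^{j-1}}$ carved out by the previous stages, and we apply Borsuk--Ulam on a suitable space of polynomials of dimension $2^{j-1}+1$ to produce $Q_j$ that simultaneously bisects $W$ on every $\Omega_i$. The cells of stage $j+1$ are the intersections $\Omega_i\cap\{Q_j>0\}$ and $\Omega_i\cap\{Q_j<0\}$, and the degrees of the $Q_j$'s sum to $\sim_n D$.

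For the variant, at each stage I would replace $Q_j$ by a nearby $\tilde Q_j$ which is non-singular. This is possible because singular polynomials of a given degree lie inside the vanishing locus of the discriminant, hence inside a proper Zariski-closed subset of the ambient polynomial space, so non-singular polynomials are dense in every neighborhood of $Q_j$. For each fixed $\Omega_i$, the quantity $\int_{\Omega_i\cap\{Q>0\}}W$ depends continuously on the coefficients of $Q$ (this is where one uses that $W\in L^1$), so a sufficiently small perturbation keeps the bisection correct up to any prescribed multiplicative factor $1+\eta_j$ on each $\Omega_i$. Then I would pass to the new cells $\Omega_i\cap\{\tilde Q_j>0\}$ and $\Omega_i\cap\{\tilde Q_j<0\}$ before continuing to the next stage.

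The main thing to watch is the accumulation of multiplicative errors over the $O(\log D)$ stages. This is handled by choosing the tolerances $\eta_j$ summable, for example $\eta_j=\eta/2^j$, so that after all stages every final cell has $W$-mass within a factor $\prod_j(1+\eta_j)\le e^{\eta}$ of the equal-partition value; taking $\eta$ small yields the required $\int_{O_i}W\sim\int_{O_j}W$. The resulting polynomial $P=\tilde Q_1\cdots\tilde Q_k$ has total degree $D$, is by construction a product of non-singular polynomials, and by Milnor's bound its complement is a union of $\sim_n D^n$ open cells, completing the proof.
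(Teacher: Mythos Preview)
Your proposal is correct and follows exactly the approach the paper sketches in the sentence preceding the theorem: perturb each Borsuk--Ulam factor $Q_j$ to a nearby non-singular $\tilde Q_j$ using density of non-singular polynomials, relying on the $L^1$-continuity of $Q\mapsto\int_{\{Q>0\}}W$ to retain approximate bisection. Your explicit bookkeeping with summable tolerances $\eta_j$ to control the $O(\log D)$-fold error accumulation is a welcome detail the paper leaves implicit; one minor remark is that the ``discriminant'' justification is a bit loose for multivariate polynomials, and the cleanest way to see density is Sard's theorem applied to perturbations of the constant term.
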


The implicit constant in the comparison $\int_{O_i}W\sim\int_{O_j}W$ can be chosen as close to $1$ as we like. Consequently, we may assume for each $i$ that
$$\int W\sim D^n\int_{O_i}W,$$
where the implicit constant is independent of the polynomial $P$ given by the preceding theorem.

\subsection{The inductive step}
We now apply the polynomial partitioning theorem with $W=\chi_{B_R}\cdot (\text{Br}_\alpha Ef)^{3.25}$. 
To summarize, we have

\begin{prop}
There exists a degree $D$ polynomial $P$, a product of non-singular polynomials, such that $\mathbb{R}^3\setminus Z(P)$ is a union of $\sim D^3$ open sets $O_i$ with
$$\int_{B_R}(\text{Br}_\alpha Ef)^{3.25}\sim D^3\int_{O_i\cap B_R}(\text{Br}_\alpha Ef)^{3.25}.$$
\end{prop}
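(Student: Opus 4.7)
The plan is to invoke the polynomial partitioning theorem stated immediately above with the test function $W=\chi_{B_R}\cdot(\text{Br}_\alpha Ef)^{3.25}$, verify its hypotheses, and then repackage the output into the form claimed.

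First I would check that $W$ is a non-negative, non-zero, $L^1$ function on $\mathbb{R}^3$. Non-negativity is immediate from the definition. Since $f\in L^\infty(S)$ and $S$ is compact, $Ef$ is a continuous bounded function on $\mathbb{R}^3$, so $\text{Br}_\alpha Ef\leq |Ef|$ is bounded on the compact set $B_R$, giving $W\in L^1(\mathbb{R}^3)$ with compact support in $B_R$. If $\text{Br}_\alpha Ef\equiv 0$ on $B_R$, the proposition holds trivially (both sides of the comparison vanish and any partitioning polynomial, e.g.\ a generic product of linear forms, does the job), so we may assume $W\not\equiv 0$.

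Next I would apply the partitioning theorem in dimension $n=3$ at degree $D$ to obtain a polynomial $P$, expressed as a product of non-singular polynomials, whose zero set decomposes $\mathbb{R}^3\setminus Z(P)$ into $\sim D^3$ open cells $O_i$ on which the integrals $\int_{O_i}W$ are pairwise comparable, with implicit constant chosen arbitrarily close to $1$. Invoking the remark following the theorem upgrades this to
$$\int W \sim D^3 \int_{O_i} W \qquad \text{uniformly in } i,$$
and substituting the explicit definition of $W$ yields
$$\int_{B_R}(\text{Br}_\alpha Ef)^{3.25}\sim D^3\int_{O_i\cap B_R}(\text{Br}_\alpha Ef)^{3.25},$$
since $W$ is supported in $B_R$ and hence equals $(\text{Br}_\alpha Ef)^{3.25}$ on $O_i\cap B_R$ and zero elsewhere.

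There is essentially no obstacle at this stage; the proposition is a packaging step. All of the genuine content, namely the Borsuk--Ulam bisection argument, its inductive iteration to exhaust the dimension count $\sim D^n$, and the density perturbation used to force the factors of $P$ to be non-singular, has already been carried out in the lead-up to the statement. The value of phrasing the conclusion in this explicit form is that it isolates the single quantitative input---the cell-wise lower bound $\int_{O_i\cap B_R}(\text{Br}_\alpha Ef)^{3.25}\gtrsim D^{-3}\int_{B_R}(\text{Br}_\alpha Ef)^{3.25}$---which drives the subsequent cellular versus algebraic dichotomy in the inductive step.
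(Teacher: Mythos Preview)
Your proposal is correct and matches the paper's approach exactly: the paper simply states ``We now apply the polynomial partitioning theorem with $W=\chi_{B_R}\cdot (\text{Br}_\alpha Ef)^{3.25}$'' and records the proposition as a summary, while you have spelled out the routine verification of the hypotheses and the trivial case $W\equiv 0$. There is nothing to add.
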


We will take $D=R^{\delta_{deg}}$, where $\delta_{deg}\sim\varepsilon^4$. We prove the theorem with $\delta_{trans}=\varepsilon^6$, $\delta=\varepsilon^2$, $K=e^{\varepsilon^{-10}}$. The key facts about these choices are that
\begin{itemize}
\item $\delta_{trans}\ll\delta_{deg}\ll\delta\ll\varepsilon$, and
\item $\delta_{trans}\ll K$, so that $R^{\delta_{trans}\log(10^{-6}K^\varepsilon)}\geq R^{1000}$.
\end{itemize}

We define $W$ to be the $R^{1/2+\delta}$ neighborhood of $Z(P)$,
$$W:=N_{R^{1/2+\delta}}Z(P),$$
and define $O_i':=(O_i\cap B_R)\setminus W$ for each $i$.

Ultimately, once we have appropriate estimates for the contributions to the integral $\int_{B_R}(\text{Br}_\alpha Ef)^{3.25}$ coming fromn $W$ and from each of the reduced cells $O_i'$, the proof of the main theorem will proceed by an induction on $R$ and $\sum_\tau\int|f_\tau|^2$ in a routine way.

We begin by addressing the estimates in the cellular case, that is, when the integral over $\cup_i O_i'$ dominates the integral over $W$.
We associate to each reduced cell $O_i'$ the family of tubes that intersect it,
$$\mathbb{T}_i:=\{T\in\mathbb{T}:T\cap O_i'\neq\emptyset\},$$
and using the wave packet decomposition, we define
$$f_{\tau,i}:=\sum_{T\in\mathbb{T}_i}f_{\tau,T},\quad f_i:=\sum_i f_{\tau,i}.$$
Now, for $x\in O_i'$, it follows straightforwardly from the properties of the wave packet decomposition that for sufficiently large $R$,
$$\text{Br}_\alpha Ef(x)\leq\text{Br}_{2\alpha} Ef_i(x)+R^{-900}\sum_\tau\|f_\tau\|_2.$$
So, up to an error term which is easily dispensed with, we can control the integral of $(\text{Br}_\alpha Ef)^{3.25}$ on $O_i'$ by $(\text{Br}_{2\alpha} Ef_i)^{3.25}$ on $O_i'$. In summary,
$$\int_{B_R}(\text{Br}_\alpha Ef)^{3.25}\lesssim D^3\int_{B_R}(\text{Br}_{2\alpha} Ef_i)^{3.25}.$$
Because the central axis of each tube intersects at most $D+1$ cells $O_i$, it follows that each tube intersects at most $D+1$ of the cells $O_i'$, and so from the properties of the wave packets we have
$$\sum_i\int|f_{\tau,i}|^2\lesssim D\int |f_\tau|^2.$$
Summing in $\tau$, we see that we can find an $i$ with
$$\sum_\tau\int |f_{\tau,i}|^2\lesssim D^{-2}\sum_\tau\int|f_\tau|^2.$$
We are then in a position to apply the inductive hypothesis, and having chosen our parameters appropriately above, the induction closes in this case.

The case where the integral over $W$ dominates is more substantial. We first cover $B_R$ by Balls $B_j$ of radius $R^{1-\delta}$. We define two collections of tubes associated to each ball $B_j$:

\begin{itemize}
\item $\mathbb{T}_{j,tang}$ is the set of tubes $T\in\mathbb{T}$ intersecting $W\cap B_j$ such that if $z$ is any non-singular point of $Z(P)$ lying in $2B_j\cap 10T$, then the angle between the tangent space to $Z(P)$ at $z$ and the central axis of $T$ is at most $R^{-(1/2)+2\delta}$ (the ``tangential tubes").
\item $\mathbb{T}_{j,trans}$ is the set of tubes $T\in\mathbb{T}$ intersecting $W\cap B_j$ such there exists a non-singular point $z$ of $Z(P)$ lying in $2B_j\cap 10T$ with the angle between the tangent space to $Z(P)$ at $z$ and the central axis of $T$ greater than $R^{-(1/2)+2\delta}$ (the ``transversal tubes").
\end{itemize}

It is straightforward to check that a tube $T$ that intersects $W\cap B_j$ will be contained in exactly one of $\mathbb{T}_{j,tang}$ or $\mathbb{T}_{j,trans}$.

We would like to be able to define $f_{j,tang}=\sum_\tau\sum_{T\in\mathbb{T}_{j,tang}}f_{\tau,T}$ and likewise for $f_{j,trans}$, and reduce the analysis on each ball $B_j$ to analyzing $f_{j,tang}$ and $f_{j,trans}$. However, we are estimating the broad part of $Ef$, which does not behave well with respect to this decomposition.

To account for this, we consider for each particular $\alpha$-broad point $x\in B_j\cap W$ a collection $I$ of caps $\tau$ on which the $|Ef_{\tau,j,tang}(x)|$ are at most $K^{-100}|Ef(x)|$.

By considering separately the cases where $I^c$ contains two non-adjacent caps (by non-adjacent, we shall mean $K^{-1}$-separated) and where it does not, we are separately able to bound $|Ef(x)|$ (up to an error term which is easy to control) by either $K^{100}\text{Bil}(Ef_{j,tang})(x)$ (which we shall momentarily define) or $\text{Br}_{2\alpha}Ef_{I,j,trans}(x)$, where $f_{I,j,trans}$ is simply the sum of $f_{\tau,j,trans}$ for $\tau\in I$. Summing over all possible $I$ (of which there are $\sim 2^{K^2}$) in order to apply this for each $x$ introduces a constant which is admissible due to our choice of $K$.

Thus it suffices to estimate the terms $\text{Br}_{2\alpha}Ef_{I,j,trans}$ and the bilinear term $\text{Bil}(Ef_{j,tang})(x)$, where the latter is defined by
$$\text{Bil}(Ef_{j,tang})(x):=\sum\limits_{\tau_1,\tau_2\text{ non-adjacent}}|Ef_{\tau_1,j,tang}(x)|^{1/2}|Ef_{\tau_2,j,tang}(x)|^{1/2}.$$

We first discuss the estimate for the bilinear tangential term. Morally speaking, the problem here is two-dimensional, since the central axes of the tangential tubes intersecting $B_j$ are nearly coplanar. We apply a variant of C\'ordoba's $L^4$ argument \cite{l4arg}, giving an estimate on the $L^4$ norm of $\text{Bil}(Ef_{j,tang})$ on $B_j\cap W$. This can be interpolated with a standard $L^2(B_R)$ bound for the extension operator.

Ultimately, we obtain for $2\leq p\leq 4$ the following bound:
$$\|\text{Bil}(Ef_{j,tang})\|^p_{L^p(B_j\cap W)}\lesssim R^{O(\delta)}R^{\frac{5}{2}-\frac{p}{4}}\left(\sum_\tau \|f_{\tau,j,tang}\|_2^2\right)^{p/2}.$$

For $p\geq 3$, we can ultimately pass to the estimate 
$$\|\text{Bil}(Ef_{j,tang})\|^p_{L^p(B_j\cap W)}\lesssim R^{O(\delta)}R^{\frac{13}{4}-p}\left(\sum_\tau\|f_\tau\|_2^2\right)^{3/2},$$
from which the desired estimate follows when we take $p=3.25$. However, to pass to this estimate, we need one key observation:

\begin{lemma} \label{lem: tangent}
For each $j$, the number of caps $\theta$ for which $\mathbb{T}_{j,tang}\cap\mathbb{T}(\theta)\neq\emptyset$ is $\lesssim R^{(1/2)+O(\delta)}$.
\end{lemma}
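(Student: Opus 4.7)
Proof plan: I would prove the lemma in three steps, combining a geometric containment with Wongkew's polynomial volume bound and a multiplicity count.

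\textbf{Step 1 (Containment).} First I would show that every tangential tube $T \in \mathbb{T}_{j,tang}$ satisfies $T \cap 2B_j \subset N := N_{C R^{1/2+\delta}}(Z(P)) \cap 2B_j$ for some absolute constant $C$. Because $T$ meets $W \cap B_j$, there is a nonsingular point $z_0 \in Z(P) \cap 10T \cap 2B_j$ at distance $\lesssim R^{1/2+\delta}$ from $T$. Work in local coordinates centered at $z_0$ with $T_{z_0} Z(P)$ horizontal, $n(z_0)=e_3$, and $v_T$ within angle $R^{-1/2+2\delta}$ of $e_1$, and write $Z(P)$ locally as the graph $x_3 = f(x_1,x_2)$. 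The tangential hypothesis means $|\langle n(z), v_T\rangle| \lesssim R^{-1/2+2\delta}$ at every nonsingular $z \in Z(P) \cap 10T \cap 2B_j$; integrating this along the $v_T$-direction over the $\lesssim R^{1-\delta}$ extent of $2B_j$ shows $Z(P) \cap 10T \cap 2B_j$ hugs a plane parallel to $v_T$ within $O(R^{1/2+\delta})$. Since $T$ itself is parallel to $v_T$ and grazes $Z(P)$ at one point, $T \cap 2B_j \subset N$.

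\textbf{Step 2 (Volume).} Next, by Wongkew's theorem for tubular neighborhoods of algebraic varieties,
\[
|N| \;\lesssim\; D \cdot R^{1/2+\delta} \cdot R^{2(1-\delta)} \;\lesssim\; R^{5/2 + O(\delta)},
\]
since $D = R^{\delta_{deg}}$ with $\delta_{deg} \ll \delta$.

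\textbf{Step 3 (Counting caps).} For each cap $\theta$ with $\mathbb{T}_{j,tang} \cap \mathbb{T}(\theta) \neq \emptyset$, pick one representative $T_\theta$. The tubes $T_\theta$ have $R^{-1/2}$-separated directions, lie inside $N$ by Step 1, and since each passes through $B_j$, satisfy $|T_\theta \cap 2B_j| \gtrsim R^{2+\delta}$. Writing $\mu$ for the multiplicity of the collection $\{T_\theta\}$, a volume comparison gives
\[
\#\{\theta : \mathbb{T}_{j,tang} \cap \mathbb{T}(\theta) \neq \emptyset\} \cdot R^{2+\delta} \;\leq\; \sum_\theta |T_\theta \cap 2B_j| \;\leq\; \|\mu\|_\infty \cdot |N|,
\]
so it suffices to show $\|\mu\|_\infty \lesssim R^{O(\delta)}$. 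I would argue this by noting that any direction $v_\theta$ supporting a tangential tube through a point $p \in N$ must remain tangent to $Z(P)$ throughout a length $\gtrsim R^{1-\delta}$ in $2B_j$; such ``persistent tangency'' corresponds to a near-planar slab of $Z(P)$ through $p$ parallel to $v_\theta$, and the bounded algebraic degree $D = R^{\delta_{deg}}$ of $Z(P)$ limits the number of such slabs at a single point to $R^{O(\delta)}$ (rather than the a priori $R^{1/2}$ given by just the tangent plane at $p$). Combining yields the asserted $R^{1/2+O(\delta)}$ bound.

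The main obstacle is the refined multiplicity estimate in Step 3: the naive bound $\|\mu\|_\infty \lesssim R^{1/2}$ coming from unit vectors in a tangent plane loses a full factor of $R^{1/2}$, so one must genuinely exploit the bounded-degree structure of $Z(P)$ to conclude that persistent tangency substantially restricts coincident directions. The ingredients in Steps 1 and 2 are standard; Step 3 is where the polynomial structure of the partitioning variety enters the estimate.
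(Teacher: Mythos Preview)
Your Steps 1 and 2 are fine and match what the paper uses. The genuine gap is in Step 3: the claimed multiplicity bound $\|\mu\|_\infty \lesssim R^{O(\delta)}$ is simply false, and your slab heuristic cannot repair it. Take $Z(P)$ to be a plane. Every direction within angle $R^{-1/2+2\delta}$ of the plane is persistently tangent in your sense, and there are $\sim R^{1/2}$ such $R^{-1/2}$-separated directions. If the representative tubes $T_\theta$ happen to pass through a common point $p$ on the plane (nothing forbids this), the multiplicity there is $\sim R^{1/2}$, not $R^{O(\delta)}$. Your volume inequality then yields only $\#\{\theta\}\lesssim R$, which is trivial. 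The degree of $Z(P)$ is irrelevant at a single point: locally $Z(P)$ looks like its tangent plane, and a tangent plane already supports the full $R^{1/2}$ directions.

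The paper avoids pointwise multiplicity entirely and instead runs a Wolff hairbrush argument. After rescaling to unit tubes in a ball of radius $L\sim R^{1/2-2\delta}$, one counts incidence triples $(Q,T_1,T_2)$ by Cauchy--Schwarz, pigeonholes on the angle $\angle(T_1,T_2)\in[\theta,2\theta]$, and extracts a single stem $T_1$ with many bristles $T_2$ at angle $\sim\theta$. The hairbrush volume is bounded \emph{below} by a 2D Kakeya estimate (essentially C\'ordoba's argument inside each slab transverse to the stem) and \emph{above} by applying Wongkew to the cylinder of dimensions $L\times \theta L\times \theta L$ containing the hairbrush. Comparing these gives $\#\{\theta\}\lesssim D^2(\log L)^2 L$. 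The point is that the upper bound from Wongkew is applied to the thin hairbrush cylinder rather than all of $2B_j$, which is what makes the bounds match; your direct volume comparison over the whole ball is too lossy to close without the false multiplicity input.
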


This requires a geometric argument adapted from the method of Wongkew \cite{volneigh} for estimating the volumes of neighborhoods of real algebraic varieties, and will be discussed in Section \ref{sec:geometry}.

This leaves the transversal part. Recall that we are in the case where the integral over $W$ dominates, so it suffices to control
$$\int_{W}(\text{Br}_\alpha Ef)^{3.25}.$$
It suffices to estimate the integral over each $B_j\cap W$ and sum. Since there are $\sim R^{3\delta}$ balls, and $\delta\ll\varepsilon$, summing the resulting estimates is okay. So we may focus on estimating each
$$\int_{B_j\cap W}(\text{Br}_\alpha Ef)^{3.25}.$$
This is bounded above (up to an error) by the integrals of $K^{100}\text{Bil}(Ef_{j,tang})^{3.25}$, which we have just dispensed with, and $\sum_I(\text{Br}_\alpha Ef_{I,j,trans})^{3.25}$.

In summary, we need only bound
$$\sum_{j,I}\int_{B_j}(\text{Br}_\alpha Ef_{I,j,trans})^{3.25}.$$
We can apply the inductive hypothesis on the balls $B_j$, and the remainder of the inductive step follows in a routine way once we observe one further crucial geometric fact, this time regarding the transversal tubes:

\begin{lemma} \label{lem: transverse}
Each tube $T\in\mathbb{T}$ belongs to $\mathbb{T}_{j,trans}$ for at most $D^{O(1)}=R^{O(\delta_{deg})}$ values of $j$.
\end{lemma}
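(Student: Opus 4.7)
The plan is to fix $T \in \mathbb{T}$ with central axis $\ell$ and unit direction $v$ and to count the balls $B_j$ for which $T \in \mathbb{T}_{j,trans}$ by decomposing the transverse portion of $Z(P) \cap 10T$ into connected pieces. Define
$$Z_{trans} := \bigl\{z \in Z(P) \cap 10T : z \text{ non-singular},\ |\partial_v P(z)| > R^{-1/2+2\delta}|\nabla P(z)|\bigr\},$$
and write its Euclidean connected components as $B_1, \ldots, B_N$. By the definition of $\mathbb{T}_{j,trans}$, every $j$ with $T \in \mathbb{T}_{j,trans}$ is witnessed by a point of $Z_{trans} \cap 2B_j$, which must lie in some branch $B_i$. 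So it suffices to bound (i) the number of $j$'s that a single branch $B_i$ can contribute to, and (ii) the total number $N$ of branches.

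For (i), at each point of a branch $B_i$ the implicit function theorem (using $\partial_v P \neq 0$) realizes $B_i$ locally as a graph $t = g(w)$ over $v^\perp$, with $\nabla_w g = -\nabla_w P/\partial_v P$, so the transversality hypothesis gives $|\nabla_w g| < R^{1/2-2\delta}$. Since $B_i \subset 10T$ forces the $w$-projection to have Euclidean diameter at most $20R^{1/2+\delta}$, propagating the gradient bound along paths in $B_i$ yields that $B_i$ itself has Euclidean diameter $O(R^{1-\delta})$. Because the balls $B_j$ have radius $R^{1-\delta}$ and their centers are $\sim R^{1-\delta}$-separated, a set of this diameter meets $2B_j$ for only $O(1)$ values of $j$. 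For (ii), $Z_{trans}$ is a semi-algebraic set cut out by $P = 0$, the quadratic inequality defining $10T$, the non-singularity condition $|\nabla P|^2 > 0$, and the polynomial angle inequality $(\partial_v P)^2 > R^{-1+4\delta}|\nabla P|^2$, each of total degree $O(D)$. The Milnor--Thom bound on the number of connected components of such a semi-algebraic set then gives $N \leq C D^{O(1)} = R^{O(\delta_{deg})}$. Combining (i) and (ii) yields the lemma.

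The main obstacle is the propagation step in (i): the pointwise bound $|\nabla_w g| < R^{1/2-2\delta}$ only yields a global Euclidean diameter bound on $B_i$ if the $w$-projection $\Omega_i$ of $B_i$ has intrinsic diameter comparable to its Euclidean diameter (and if the local graph lifts to a genuine graph rather than a badly wound multi-sheeted cover), neither of which is automatic for an arbitrary connected semi-algebraic set. A standard remedy is to subdivide each $B_i$ further via a cylindrical algebraic decomposition of $\Omega_i$ into semi-algebraic cells on which the naive Lipschitz estimate applies; the number of extra pieces introduced is at most $D^{O(1)}$, which is harmless since the final count is $R^{O(\delta_{deg})}$ in any case.
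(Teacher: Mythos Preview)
The study guide does not itself prove this lemma; Section~4.1 only describes Guth's argument at a high level, noting that it unwinds as a couple of steps of an induction on the dimension of the variety, passing through the auxiliary lower-dimensional locus where transversality degenerates, and using only elementary polynomial facts of the ``a line meets a degree-$D$ variety in at most $D$ points'' flavour. Your route via Milnor--Thom for the component count plus a diameter bound on each component is a genuinely different (more semi-algebraic, less hands-on) approach.

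Step (ii) is fine: $Z_{trans}$ is semi-algebraic of complexity $O(D)$ in $\mathbb{R}^3$, and the Oleinik--Petrovsky--Milnor--Thom bound in its semi-algebraic form gives $N \leq D^{O(1)}$. The real content is exactly where you put it, in (i), and the CAD remedy is close but not complete as written. A CAD of the $w$-plane adapted to $P$ and to the angle polynomial does cut $Z_{trans}$ into $D^{O(1)}$ graph sheets, each with $|\nabla g| < R^{1/2-2\delta}$ over a two-dimensional cell $C$ contained in the disc of radius $10R^{1/2+\delta}$. But the Lipschitz bound controls the $v$-extent of a sheet only by $R^{1/2-2\delta}$ times the \emph{intrinsic} diameter of $C$, not its Euclidean diameter, and you have not said why these are comparable. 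They are, but this needs one further input: a planar CAD cell has the form $\{a<x<b,\ f(x)<y<g(x)\}$ with $f,g$ algebraic of degree $D^{O(1)}$, and the Cauchy--Crofton arc-length bound for a degree-$d$ plane curve in a box of side $s$ (namely $O(ds)$) shows the intrinsic diameter of $C$ is $O(D^{O(1)} R^{1/2+\delta})$. With that in hand each sheet has diameter $O(D^{O(1)} R^{1-\delta})$ and therefore meets $D^{O(1)}$ of the balls $2B_j$, and your count closes. So the gap you flagged is genuine but small; the fix just needs to be made explicit rather than absorbed into the word ``standard''.
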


We can think of this as analogous to the fact that any line intersects $Z(P)$ at most $D$ times, except we must now replace $D$ with $D^{O(1)}$ in the setting where we are now considering intersections of tubes and balls in a neighborhood of $Z(P)$. Of course, the ``transversality" assumption is essential to this observation, as we could otherwise have tubes parallel to $Z(P)$ which intersect many $B_j$.
This estimate will be discussed further in Section \ref{sec:geometry}.

\subsection{Structure of this study guide}
In this introduction, we motivated and outlined the core argument. The remainder of these notes will elaborate on certain features of \cite{restest}.

Section~\ref{sec: counterexample} contains an outline of Guth's first example showing that the exponent $p=3.25$ is sharp for the method (from Section 0.3 in the original paper), but we fill in many of the probablistic methods, which may be helpful to those less familiar with the standard arguments.

Section \ref{sec: Induction} gives further details on the proof of Theorem \ref{induction}, emphasizing the more difficult steps of the argument for the benefit of readers less familiar with the techniques.

Section \ref{sec:geometry} contains a discussion of the geometric input to \cite{restest}. We give an outline of the proof of the transversal estimate Lemma \ref{lem: transverse} and a detailed proof of the tangential estimate Lemma \ref{lem: tangent}. Guth's paper \cite{restest} opened avenues to the latest improvements on restriction, and these are discussed at the end. 

\section{Counterexample} \label{sec: counterexample}  

Fix $K>1$ and $\epsilon >0$. As in the introduction, partition the paraboloid (truncated, graphed over the unit ball $B_2(0,1)$) into $K^{-1}$-caps $\tau$, and define the broad part 
\[\text{Br}_{K^{-\epsilon}} Ef(x) = \chi_{\text{$K^{-\epsilon}$-broad}}(x) |Ef(x)|, \]
where $x$ is $K^{-\epsilon}$-broad if for any $K^{-1}$-cap $\tau$, 
\[ |Ef_{\tau}(x) | \leq K^{-\epsilon} |Ef(x)|, \]
and $\chi_{\text{$K^{-\epsilon}$-broad}}$ is the indicator function of the broad points. 

\begin{theorem}
Let $p \in [3, \infty)$. Suppose that for all $\epsilon >0$, there exists a constant $K=K(\epsilon) > 1$ such that for all $R \geq 1$, 
\begin{equation} \label{broadinequality} \left\lVert \text{Br}_{K^{-\epsilon} } Ef \right\rVert_{L^p(B_R)} \leq C_{\epsilon} R^{\epsilon} \|f\|_{2}^{12/13} \|f\|_{\infty}^{1/13}, \end{equation}
with $\lim_{\epsilon \to 0} K(\epsilon) = \infty$. Then $p \geq 13/4$.
\end{theorem}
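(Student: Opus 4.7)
The plan is to saturate the estimate with an explicit random extension built from a one-parameter family of wave packets arranged along a curve. Pick a principal-direction curve $\gamma \subset S$ of unit length (for instance, on the paraboloid, the image in $S$ of a straight segment $\{(t,0) : t \in [0,1]\}$ under the graph map, so that the unit normals $v_\theta$ along $\gamma$ all lie in a single $2$-plane $P$). Let $\Theta$ be the collection of $R^{-1/2}$-caps $\theta \subset S$ whose centres lie within distance $R^{-1/2}$ of $\gamma$, so $|\Theta| \sim R^{1/2}$. Define
\[
    f = \sum_{\theta \in \Theta} \varepsilon_\theta \chi_\theta,
\]
with independent random signs $\varepsilon_\theta \in \{\pm 1\}$. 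Then $\|f\|_2^2 \sim R^{-1/2}$ and $\|f\|_\infty = 1$, so the right-hand side of (\ref{broadinequality}) has size $C_\epsilon R^\epsilon R^{-3/13}$.

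For each $\theta$, $E\chi_\theta$ has magnitude $\sim R^{-1}$ on the tube $T_\theta$ of length $R$ and radius $R^{1/2}$ about the normal line through the origin in direction $v_\theta$, and is negligibly small elsewhere in $B_R$. Because all $v_\theta$ lie (approximately) in the plane $P$, the union of the $T_\theta$ is contained in the slab $\Omega = \{x \in B_R : \dist(x, P) \lesssim R^{1/2}\}$, of volume $\sim R^{5/2}$. Since the $v_\theta$ are spaced at angular distance $\sim R^{-1/2}$, a direct count shows that a point $x \in \Omega$ with $|x| = r$ lies in
\[
    N(r) := \min(R^{1/2}, R/r)
\]
of the tubes $T_\theta$. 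By Khintchine, $\mathbb{E}|Ef(x)|^2 = \sum_\theta |E\chi_\theta(x)|^2 \sim R^{-2} N(r)$, so Jensen's inequality (for $p \geq 2$) yields a sign choice for which
\[
    \int |Ef|^p \gtrsim \int_\Omega (R^{-2} N(r))^{p/2} \, dx.
\]
Noting that the slice of $\Omega$ at radius $r$ has volume $\sim R^{1/2} r\, dr$, splitting the integral at $r = R^{1/2}$ and performing the elementary integrals gives $\|Ef\|_p^p \gtrsim R^{5/2 - p}$ for $3 \leq p < 4$, and hence $\|Ef\|_p \gtrsim R^{5/(2p) - 1}$.

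It remains to verify broadness. A $K^{-1}$-cap $\tau$ either misses $\gamma$ entirely or meets it in an arc of length $\sim K^{-1}$, so $f_\tau$ is supported on either $0$ or $\sim R^{1/2}/K$ caps of $\Theta$. Repeating the second-moment computation for $f_\tau$ yields $|Ef_\tau(x)| \sim K^{-1/2}|Ef(x)|$ pointwise on $\Omega$, and this is $\leq K^{-\epsilon}|Ef(x)|$ as soon as $\epsilon \leq 1/2$. Hence $\text{Br}_{K^{-\epsilon}} Ef \sim |Ef|$ on (most of) $\Omega$, and substituting into (\ref{broadinequality}) gives
\[
    R^{5/(2p)-1} \lesssim C_\epsilon R^{\epsilon - 3/13} \qquad \text{for all } R \geq 1,
\]
which upon letting $R \to \infty$ and then $\epsilon \to 0$ forces $p \geq 13/4$.

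The main difficulty is the geometric bookkeeping: the two-dimensional (rather than three-dimensional) direction set of the $v_\theta$, the $R^{1/2}$ transverse confinement into the slab $\Omega$, and the tube count $N(r) = \min(R^{1/2}, R/r)$ all conspire to produce the exponent $5/2 - p$. Cruder examples — for example the fully three-dimensional bush in which one uses all $R^{-1/2}$-caps on $S$, or any construction that ignores the transverse confinement — only produce $p \geq 3$, and it is precisely the coplanarity of the normals along $\gamma$ that drives the exponent up to $13/4$. The passage from expected values to a single deterministic sign realisation, controlling both the pointwise broadness condition and the integral lower bound on $\|Ef\|_p$ simultaneously, is standard and proceeds via Chebyshev together with a union bound over a suitably discretised subset of $\Omega$.
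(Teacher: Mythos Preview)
Your broadness argument has a genuine gap, and it is exactly the step that the paper's construction is designed to handle.

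The bulk of the $L^p$ mass of your bush example lives at large radius. Indeed, in your own computation of $\int_\Omega (R^{-2}N(r))^{p/2}\,dx$, the contribution $R^{5/2-p}$ comes from the range $r\sim R$, where $N(r)=R/r\sim 1$. At such a point $x$, the $O(1)$ tubes through $x$ all have normals $v_\theta$ within angle $\sim R^{-1/2}$ of $x/|x|$, hence all lie in a \emph{single} $K^{-1}$-cap $\tau$. Then $Ef_\tau(x)=Ef(x)$ up to negligible error, so $x$ is not $K^{-\epsilon}$-broad for any $\epsilon>0$. Your claimed pointwise comparison $|Ef_\tau(x)|\sim K^{-1/2}|Ef(x)|$ is only valid where the $N(r)$ tubes through $x$ are spread across many $\tau$'s, i.e.\ where the angular aperture $R^{1/2}/r$ exceeds $K^{-1}$, which forces $r\lesssim K R^{1/2}$. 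Restricting your integral to that genuinely broad region gives only $\int_{\mathrm{broad}}|Ef|^p \lesssim K^{O(1)} R^{3/2-3p/4}$, and plugging this into \eqref{broadinequality} yields merely $p\ge 26/9$, not $13/4$.

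The paper repairs exactly this defect by introducing an extra parameter $B$ with $KR^{\delta}\le B\le R^{1/2}$: for each direction $\theta\in\Lambda$ one selects $B$ translated wave packets (not just the one through the origin), placed \emph{randomly} inside the slab. A Chernoff-type argument then shows that a typical point $x$ of the slab has $\sim B$ tubes through it, with no single $\tau$ contributing more than $\sim B/K$ of them; thus every point of the slab is broad. This random spreading is the essential idea you are missing---coplanarity of the normals alone is not enough. The resulting example has $\|f\|_2\sim B^{1/2}R^{-1/4}$, $\|f\|_\infty\lesssim B$, and $\int_{\mathrm{broad}}|Ef|^p\gtrsim R^{5/2-p}B^{p/2}$; optimising in $B$ (taking $B\sim K R^{\delta}$) then gives $p\ge 13/4$.
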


Observe that this inequality is the broad part estimate given in the introduction. Thus this theorem essentially says that $p=3.25$ is the best possible exponent for this method, without further refinement.

\begin{proof} Given $\epsilon >0$, choose $\delta>0$ with $\delta \ll \epsilon$ (we take $\delta=\epsilon^2$ and $\epsilon < 1/100$, for concreteness). Cover the paraboloid by $R^{-1/2}$-caps $\theta$. Let $\Lambda$ be the set of caps intersecting the $R^{-1/2}$-neighbourhood of $\{ (x_1,x_2, |x|^2) \in \mathbb{P}: x_1 = 0 \}$. Then $|\Lambda| \sim R^{1/2}$. 

Fix an integer $B$ with $K R^{\delta} \leq B \leq R^{1/2}$ to be chosen later. Let $S$ be the slab $S := [-R^{1/2}, R^{1/2}] \times [-R,R] \times [-R,R]$. Let $\mathbb{T} = \bigcup_{\theta \in \Lambda} \mathbb{T}_{\theta}$, where each $\mathbb{T}_{\theta}$ is a finitely overlapping set of $R^{1/2} \times R^{1/2} \times R$-tubes parallel to the normal at $\theta$, which cover the slab $S$. Let $\mathcal{S}$ be the (finite) set of subsets $\mathbb{W}$ of $\mathbb{T}$ such that $|\mathbb{W}\cap \mathbb{T}_{\theta}| = B$ for every $\theta \in \Lambda$. Let $\mathbb{P}$ be the uniform probability measure on $\mathcal{S}$ which gives each element equal probability. Then for each $x \in S$ and $\theta \in \Lambda$,  
\[ \mathbb{P}\left\{ x \in T \text{ for some } T \in \mathbb{W} \cap \mathbb{T}_{\theta} \right\} \sim B/ R^{1/2}. \]
It follows that for any $x \in S$,
\[ \mathbb{E}\left(\sum_{T \in \mathbb{W} }\chi_T(x) \right) =   \sum_{\theta}\mathbb{E}\left(  \sum_{T \in \mathbb{W} \cap \mathbb{T}_{\theta} } \chi_T(x) \right) \sim B. \]
It follows that
\begin{equation} \label{Wcond} \int_S \sum_{T \in \mathbb{W}} \chi_T \sim m(S) B, \end{equation}
for all $\mathbb{W}$ in a set of probability $\gtrsim 1$, where $m(S)$ is the Lebesgue measure of $S$. Cover $S$ with cubes $Q$ of side length $R^{1/2}$. For each cube $Q$ and each $K^{-1}$-cap $\tau$, we claim that
\begin{equation}  \label{claimedineq} \mathbb{P}\left\{ \sup_{x \in Q} \sum_{T \in \mathbb{W} \cap \mathbb{T}_{\tau}} \chi_{T}(x) \geq CBK^{-1} \right\} \leq e^{-(C/2) B K^{-1} }, \end{equation}
if $C>0$ is a sufficiently large absolute constant, where $\mathbb{T}_{\tau}$ is the union over $\mathbb{T}_{\theta}$ with $\theta \cap \tau \neq \emptyset$. Roughly speaking, this is because we are  summing $\lesssim R^{1/2} K^{-1}$ identically independent distributed Bernoulli random variables, each of which takes value 1 with probability $\sim B/ R^{1/2}$, and the probability that the sum is very far from the expected value is exponentially small. This is a special instance of the fact that if we conduct $N$ flips of a biased coin that shows heads with probability $p$, then the probability that the fraction of heads is very far from $p$ is exponentially small in $N$. This probability heuristic arises often in work on the Kakeya problem and the restriction conjecture. 

To make the above heuristic precise, for each $Q$ and $\tau$ let
\[ \mathbb{T}_{Q, \tau} = \{ T \in \mathbb{T}: T \cap Q \neq \emptyset: T \in \mathbb{T}_{\theta} \text{ with } \theta \cap \tau \neq \emptyset \}. \]
For $T \in \mathbb{T}_{Q, \tau}$ let $Y_T = 1$ if $T \in \mathbb{W}$, and zero otherwise. Then the probability above is bounded by  
\[ \mathbb{P}\left\{  \sum_{T \in \mathbb{T}_{Q, \tau}} Y_T \geq CBK^{-1} \right\}. \]
To ensure the random variables are actually independent, for each $\theta$ we need to pick exactly one tube $T \in \mathbb{T}_{Q, \tau} \cap \mathbb{T}_{\theta}$ from the $\sim 1$ tubes in $\mathbb{T}_{Q, \tau} \cap \mathbb{T}_{\theta}$, but this would only change the $CBK^{-1}$ by a harmless constant factor, so to simplify notation we ignore this technicality below.  

By exponentiating and then using Chebychev's inequality, the probability that any individual sum exceeds $CBK^{-1}$ is
\begin{align*} \mathbb{P}\left\{ e^{\sum_{T \in \mathbb{T}_{Q, \tau}} Y_T} \geq e^{CBK^{-1}}\right\} &\leq e^{-CBK^{-1}} \int e^{\sum_{T \in \mathbb{T}_{Q, \tau}} Y_T}  \, dP \\
&= e^{-CBK^{-1}} \prod_{T \in \mathbb{T}_{Q, \tau}} \int e^{Y_T} \, dP \\
&\leq e^{-CBK^{-1}} \prod_{T \in \mathbb{T}_{Q, \tau}} \left( 1 + 1000BR^{-1/2} \right) \end{align*} 
If we apply the inequality $\log(1+x) \leq x$ to the right-hand side, we get
\[\log\left( \mathbb{P}\left\{ \sum_{T \in \mathbb{T}_{Q, \tau}} Y_T \geq CBK^{-1} \right\} \right) \\
\leq -CBK^{-1} + \sum_{T \in \mathbb{T}_{Q, \tau}} O\left( BR^{-1/2}\right).\]
The sum has $\lesssim R^{1/2}K^{-1}$ terms and is therefore dominated by the first term (provided $C$ is now chosen sufficiently large), so this gives 
\[ \mathbb{P}\left\{ \sup_{x \in Q} \sum_{T \in \mathbb{W} \cap \mathbb{T}_{\tau}} \chi_{T}(x) \geq CBK^{-1} \right\} \leq e^{-(C/2) B K^{-1} }. \]
This verifies the claimed inequality \eqref{claimedineq}. By summing over the cubes $Q$ and the caps $\tau$, and using the trivial union bound, it follows that the probability that some point in the slab has at least $CBK^{-1}$ tubes $T  \in \mathbb{W}$ passing through it corresponding to a single $\tau$ is 
\[ \lesssim R K e^{-(C/2) B K^{-1} }. \]
Since $B \geq KR^{\delta}$, this is exponentially small in $R$. It follows that, if $R$ is sufficiently large (depending on $\epsilon$ and $\delta$), then for a set of $\mathbb{W}$ of probability $\gtrsim 1$, no point in the slab has $\geq CBK^{-1}$ tubes passing through it corresponding to a single $\tau$, and \eqref{Wcond} holds. By the pigeonhole principle, such sets $\mathbb{W}$ also have the property that no point in the slab has $\geq 100CB$ tubes passing through it. From \eqref{Wcond} and the preceding discussion, for some absolute constant $c>0$ we can find a specific set $\mathbb{W}$ (non-random) such that
\[ m(F) \sim m(S) \sim R^{5/2}, \qquad F = \left\{ x \in S: \sum_{T \in \mathbb{W}} \chi_T \geq c B\right\}, \]
such that no point in the slab has $\geq CBK^{-1}$ tubes passing through it corresponding to a single $\tau$. Let $\varepsilon = (\varepsilon_T)_{T \in \mathbb{W}}$ be a sequence of independent and identically distributed random variables, taking the values $\pm 1$ with equal probability, on the same probability space. For each $\theta$, let $\phi_{\theta}$ be a smooth bump function supported on a $R^{-1/2} \times R^{-1/2} \times R^{-1}$ rectangular box centred at the centre of $\theta$, and $\sim 1$ on a slightly smaller box, where 
\[ |E\phi_{\theta}(x)| \gtrsim R^{-1}, \]
on a $\sim R^{1/2} \times R^{1/2} \times R$ tube $T_{\theta,0}$ centered at the origin and dual to $\theta$ (meaning that the long side of $T_{\theta,0}$ has the same direction as the short side of $\theta$). If we choose the implicit constants defining the support of $\phi_{\theta}$ small enough, then the set $\mathbb{T}_{\theta}$ used above will be a boundedly overlapping cover of the slab $S = [-R^{1/2}, R^{1/2}] \times [-R,R] \times [-R,R]$ by translates $T$ of $T_{\theta,0}$. Define 
\[ \phi_T(\omega) = \phi_{\theta}(\omega) e^{-2\pi i \langle \omega,x_T\rangle}, \]
with $x_T \in \mathbb{R}^3$ chosen such that $T_0+x_T = T$, so that
\[ E\phi_T(x) = E\phi_{\theta}(x-x_T), \] has modulus $\gtrsim R^{-1}$ on $T$.  Define 
\[ f = f_{\varepsilon} = \sum_{T \in \mathbb{W}} \varepsilon_T \phi_T. \] Then, for each $x \in F$, Khintchine's inequality gives
\[ \left(\mathbb{E}  \left\lvert Ef(x) \right\rvert^p \right)^{1/p} \sim \left(\sum_{T \in \mathbb{W}} |E\phi_T(x)|^2 \right)^{1/2} \gtrsim R^{-1} B^{1/2}. \]
We note that the set $\mathbb{W}$ used above is fixed (non-random), and the expectation is for the random variables $\varepsilon_T$. It follows that 
\[ \mathbb{E}\left(\int_{F} |Ef|^p \right)\gtrsim m(F) R^{-p} B^{p/2} \sim R^{\frac{5}{2}-p } B^{\frac{p}{2}}. \] 
We want to replace the integral on the left-hand side of the above with the broad norm, so we will show that the contribution of the narrow points to the above integral is much smaller than the right-hand side of the above. If $x$ is $K^{-\epsilon}$-narrow for $Ef$, then 
\[ |Ef(x)|^p \leq K^{\epsilon p} \sum_{\tau} |Ef_{\tau}|^p. \]
Thus (by Khintchine's inequality again)
\begin{align*} \mathbb{E}\left(\int_{\text{narrow} \cap S} |Ef|^p \right) &\lesssim K^{\epsilon p}\sum_{\tau} \int_{S} \mathbb{E} |Ef_{\tau}|^p \\
&\lesssim K^{\epsilon p}\sum_{\tau} \int_{S} \left(\sum_{T \in \mathbb{W} \cap \mathbb{T}_{\tau} } |E\chi_{\tau}\phi_T(x)|^2 \right)^{p/2} \\
&\lesssim K^{1+\epsilon p -\frac{p}{2} } m(S) R^{-p}B^{\frac{p}{2}} \\
&\lesssim K^{1+\epsilon p - \frac{p}{2}} R^{ \frac{5}{2} - p} B^{\frac{p}{2}}. \end{align*}
(The fact that the functions $E\chi_{\tau}\phi_T$ are not literally supported on $T$ is a minor technicality which can morally be ignored in the above. To get around it, we need to sum a geometric series over dyadic numbers $M$ and use that for $x$ outside $MT$, $|E\chi_{\tau}\phi_T(x)|$ is $\leq C_N M^{-N}$, and that for a given $M$, by pigeonholing there cannot be more than $M^{O(1)}B K^{-1}$ tubes $MT$ passing through $x$ corresponding to a single $\tau$.)
Since $p>2$ and $\lim_{\epsilon \to 0} K(\epsilon)= \infty$, the above is much smaller than the lower bound $R^{\frac{5}{2}-p } B^{p/2}$ for the integral $\mathbb{E}\left(\int_{F} |Ef|^p \right)$ above, provided $\epsilon$ is sufficiently small. Since the broad and narrow points partition the slab, it follows that  
\[ \mathbb{E}\left(\int_{\text{broad} \cap S} |Ef|^p \right) \gtrsim R^{\frac{5}{2}-p } B^{p/2}. \]

By Khintchine's inequality (or just $L^2$ orthogonality), 
\[ \mathbb{E} \|f\|_{2}^2 \sim B R^{-1/2}. \]
It follows that we can find a single function $f$ (non-random) such that
\[ \left(\int_{{\text{broad} \cap S}} |Ef|^p\right)^{1/p} \gtrsim R^{\frac{5}{2p}-1 } B^{1/2} \quad \text{ and } \quad  \|f\|_{2} \lesssim B^{1/2} R^{-1/4}. \]
The triangle inequality gives
\[ \|f\|_{\infty} \lesssim B. \]
Thus if \eqref{broadinequality} holds, then 
\[ R^{\frac{5}{2p}-1 } B^{\frac{1}{2}} \lesssim B^{\frac{6}{13}} R^{\frac{-3}{13}} B^{\frac{1}{13}} R^{\epsilon}. \]
Simplifying gives 
\[ R^{\frac{5}{2p}} \lesssim B^{\frac{7}{13} - \frac{1}{2}} R^{ \frac{10}{13} + \epsilon}. \]
Since the exponent of $B$ is positive, we get the strongest restriction on $p$ by taking $B$ as small as possible, namely $B = K R^{\delta}$. Thus 
\[  R^{\frac{5}{2p}} \lesssim K^{\frac{7}{13} - \frac{1}{2}}   R^{\frac{10}{13} +\delta+ \epsilon}. \]
Since $K$ is constant (for fixed $\epsilon$), sending $R \to \infty$ gives 
\[ \frac{5}{2p} \leq \delta +\epsilon+ \frac{10}{13}. \]
Since this holds for any $0 < \delta \ll \epsilon$ and $\epsilon>0$, we get 
\[ \frac{5}{2p} \leq \frac{10}{13}, \]
and rearranging this gives $p \geq \frac{13}{4}$.  \end{proof}

The example above seems to really require the assumption that $K \to \infty$ as $\epsilon \to 0$, and this is not mentioned in Guth's paper, so it may be asked whether one could get a better $p$ by avoiding this. However, if $K$ were to remain bounded as $\epsilon \to 0$, then $K^{-\epsilon}$ would be greater than 1/2 for $\epsilon$ sufficiently small, and in this situation the degree 2 algebraic surface example from Guth's paper also gives the restriction $p \geq 13/4$. In his paper, he writes ``Because 1/2 is larger than $K^{-\epsilon}$, this example is not directly relevant to Theorem 0.3, but I think it is morally relevant''. However, the above working actually shows that the example can be used to remove the assumption $K(\epsilon) \to \infty$ as $\epsilon \to 0$, so it seems to be directly relevant.



\section{Details on the Proof of Theorem \ref{induction}} \label{sec: Induction}
As detailed in the introduction, the main theorem reduces to Theorem \ref{induction}, which implies the desired broad estimate Theorem \ref{broadest}. Here, we shall illustrate how this happens and then elaborate the inductive procedure. For convenience, we reiterate Theorem \ref{induction} here:

\begin{theorem}\label{3.1}
    For any $\varepsilon>0$, there exists $K,L$ and a small $\delta_{trans}\in(0,\varepsilon)$, depending only on $\varepsilon$, so that the following holds.

    Suppose that $S$ is a truncated paraboloid, that the caps $\tau$ cover $S$ with multiplicity at most $\mu$, and that $\alpha\geq K^{-\varepsilon}.$

    If for any $\tau$ and any $\omega\in S$, we have
    \begin{equation}\label{3.1.1}
        \oint_{B(\omega,R^{-1/2})\cap S}|f_\tau|^2\leq1,
    \end{equation}
    then
    \begin{equation}\label{3.1.2}
        \int_{B_R}Br_\alpha Ef^{3.25}\leq C_\varepsilon R^\varepsilon(\displaystyle\sum_\tau\int_S|f_\tau|^2)^{(3/2)+\varepsilon}R^{\delta_{trans}\log(K^\varepsilon\alpha\mu)}.
    \end{equation}
    Moreover, $\displaystyle\lim_{\varepsilon\longrightarrow0^+}K(\varepsilon)=\infty$.
\end{theorem}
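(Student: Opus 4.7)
The plan is to prove Theorem \ref{3.1} by a double induction on $R$ and on the quantity $M := \sum_\tau \int_S |f_\tau|^2$, with parameters chosen so that $\delta_{trans} \ll \delta_{deg} \ll \delta \ll \varepsilon$ and $K = e^{\varepsilon^{-10}}$, $D = R^{\delta_{deg}}$, and $\delta = \varepsilon^2$. The base cases (very small $R$ or very small $M$) are trivial because for such parameters the right-hand side of \eqref{3.1.2} is already much larger than the trivial bound one gets from the wave packet decomposition and the constraint \eqref{3.1.1}. After reducing to $f = \sum_\tau f_\tau$, one fixes the wave packet decomposition $f = \sum_{\tau,T} f_{\tau,T}$ of Section \ref{wavepacket}, applies polynomial partitioning to $W = \chi_{B_R}(\mathrm{Br}_\alpha Ef)^{3.25}$, and splits according to whether the integral is dominated by the cells $O_i' = (O_i \cap B_R) \setminus N_{R^{1/2+\delta}} Z(P)$ or by the wall $W$.

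For the cellular case, the key point is that since each tube's central axis meets $Z(P)$ in $\le D$ points, it intersects at most $D+1$ of the cells $O_i'$. Combining this with the essential orthogonality of the wave packets gives $\sum_i \int |f_{\tau,i}|^2 \lesssim D \int |f_\tau|^2$, so by pigeonholing in $i$ one finds a cell on which $\sum_\tau \int |f_{\tau,i}|^2 \lesssim D^{-2} \sum_\tau \int |f_\tau|^2$. Since the cell supports only $\sim D^{-3}$ of the integral, applying the inductive hypothesis on this cell (with the smaller mass) costs a factor $D^3 \cdot (D^{-2})^{3/2+\varepsilon}=D^{-2\varepsilon}$, which is $\le 1$ after adjusting constants.

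For the algebraic case, cover $B_R$ by balls $B_j$ of radius $R^{1-\delta}$, and for each $j$ split the tubes meeting $W \cap B_j$ into $\mathbb{T}_{j,tang}$ and $\mathbb{T}_{j,trans}$. For a broad point $x \in B_j \cap W$, let $I = I(x)$ be the set of caps $\tau$ with $|Ef_{\tau,j,tang}(x)| \le K^{-100}|Ef(x)|$. If $I^c$ contains two $K^{-1}$-separated caps, then $|Ef(x)| \lesssim K^{100}\mathrm{Bil}(Ef_{j,tang})(x)$ up to admissible errors; otherwise the broadness of $x$ forces $|Ef(x)| \lesssim \mathrm{Br}_{2\alpha} Ef_{I,j,trans}(x)$. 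Summing over $j$ and over the $\sim 2^{K^2}$ possible $I$ gives
\[ \int_W (\mathrm{Br}_\alpha Ef)^{3.25} \lesssim 2^{K^2}\sum_j \Big( K^{325}\int_{B_j}\mathrm{Bil}(Ef_{j,tang})^{3.25} + \sum_I \int_{B_j}(\mathrm{Br}_{2\alpha}Ef_{I,j,trans})^{3.25}\Big). \]
The bilinear term is handled by a Córdoba-style $L^4$ argument on $B_j \cap W$ interpolated with the trivial $L^2(B_R)$ bound; to convert the resulting bound $R^{5/2 - p/4}$ into $R^{13/4 - p}$ at $p=3.25$, one crucially uses Lemma \ref{lem: tangent} to bound by $R^{1/2+O(\delta)}$ the number of caps contributing tangentially, so that one may exchange a factor of $(\#\theta)^{p/2 - 1}$ for a power of $R^{1/2}$. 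This is the step where the exponent $3.25$ is forced. For the transversal term, one applies the inductive hypothesis on each $B_j$ with $R$ replaced by $R^{1-\delta}$ and with the new multiplicity $\alpha_{new}\mu_{new}$; Lemma \ref{lem: transverse} ensures that $\sum_j \sum_\tau \int |f_{\tau,j,trans}|^2 \le R^{O(\delta_{deg})}\sum_\tau\int|f_\tau|^2$, so that the $\log(K^\varepsilon\alpha\mu)$ factor in the inductive bound grows by an admissible amount.

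The main obstacle, and the reason for the delicate parameter hierarchy, is closing the induction for the transversal term: each application loses a factor $R^{\delta_{trans}\log(\text{something involving }D^{O(1)})}$, and one must verify that these losses, together with the $2^{K^2}$ from summing over $I$ and the $R^{O(\delta)}$ factors from the bilinear estimate, still fit under the target $R^\varepsilon$ after induction. This is where the choices $\delta_{trans} = \varepsilon^6$, $\delta_{deg} \sim \varepsilon^4$, $\delta = \varepsilon^2$, and $K = e^{\varepsilon^{-10}}$ are tuned: they ensure $\delta_{trans}\cdot\delta_{deg}^{-1}\log R \ll \varepsilon \log R$ and that the $2^{K^2}$ and $K^{325}$ prefactors are absorbed into $C_\varepsilon$. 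Once this bookkeeping is checked, the induction closes and \eqref{3.1.2} follows; the reduction from Theorem \ref{induction} to Theorem \ref{broadest} then comes by taking the single-cover case $\mu = 1$ and $\alpha = K^{-\varepsilon}$, using Cauchy-Schwarz and \eqref{3.1.1} to pass from the $\ell^2$ sum of $\|f_\tau\|_2^2$ to $\|f\|_2^{12/13}\|f\|_\infty^{1/13}$.
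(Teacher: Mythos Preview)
Your proposal is correct and follows essentially the same approach as the paper: the double induction on $R$ and on $\sum_\tau\|f_\tau\|_2^2$, polynomial partitioning, the cellular pigeonhole giving the $D^{-2\varepsilon}$ gain, the tangential/transversal split on balls $B_j$ of radius $R^{1-\delta}$, the C\'ordoba $L^4$ argument combined with Lemma~\ref{lem: tangent} for the bilinear term, and Lemma~\ref{lem: transverse} with induction on $R$ for the transversal term. The only minor imprecision is in your description of the transversal bookkeeping: the $R^{O(\delta_{deg})}$ loss from Lemma~\ref{lem: transverse} appears as a separate multiplicative factor (from summing over $j$), not inside the argument of the $R^{\delta_{trans}\log(\cdot)}$ term, and the relevant inequality to close the induction is $O(\delta_{deg})+O(\delta_{trans})\le\delta\varepsilon$ rather than the ratio condition you wrote---but with your parameter choices this holds, so the argument goes through.
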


This implies Theorem \ref{broadest} as follows. We first note that the desired inequality
$$\|\text{Br}_{K^{-\varepsilon}} Ef\|_{L^{3.25}(B_R)}\leq C_\varepsilon R^\varepsilon \|f\|_2^{12/13}\|f\|_\infty^{1/13}$$
is preserved by scaling, so that we may assume $\|f\|_\infty=1$, which then satifies (\ref{3.1.1}) and allows us to apply Theorem \ref{3.1}. Then, we obtain
$$\begin{array}{rcl}
    \|Br_{K^{-\varepsilon}}Ef\|_{L^{3.25}(B_R)} & \lesssim & R^{O(\varepsilon)+\delta_{trans}\log(K^\varepsilon\alpha\mu)}\|f\|_{L^2(S)}^{12/13+O(\varepsilon)} \\
    & \leq & R^{O(\varepsilon+\delta_{trans}\log(K^\varepsilon\alpha\mu)}\|f\|_{L^2(S)}^{12/13}|S|^{O(\varepsilon)}\|f\|_{L^\infty}^{O(\varepsilon)} \\
    & \lesssim & R^{O(\varepsilon)}\|f\|_{L^2(S)}^{12/13}\|f\|_{L^\infty}^{1/13},
\end{array}$$
where the first inequality is due to Theorem \ref{3.1}, and the last inequality is obtained by choosing $\delta_{trans}$ and $K$ carefully and the fact that $\|f\|_{L^\infty}$ is normalized to $1$ and that $|S|$ is a constant.

We now proceed with the proof of Theorem \ref{3.1}. The key structure is an induction on $R$ and $\displaystyle\sum_\tau\int_S|f_\tau|^2$.

\subsection{Base case}

\begin{lemma}\label{R=1}
    With the same assumptions in Theorem \ref{3.1}, (\ref{3.1.1}) holds for $R=R_0$ for some large $R_0$.
\end{lemma}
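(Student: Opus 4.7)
The plan is to derive \eqref{3.1.2} at $R=R_0$ from the crudest possible pointwise bound on $|Ef|$, choosing $R_0$ large enough (in terms of $\varepsilon$ and $\delta_{trans}$) so that the right-hand-side factor $R^{\delta_{trans}\log(K^\varepsilon\alpha\mu)}$ absorbs all of the $\mu$-dependence picked up along the way.

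First I would bound $|Ef(x)|$ pointwise by bare integration. Since $f_\tau$ is supported in $\tau$ and $|\tau|\lesssim \mu K^{-2}$, Cauchy--Schwarz gives $|Ef_\tau(x)|\le |\tau|^{1/2}\|f_\tau\|_{L^2(S)}\lesssim \mu^{1/2}K^{-1}\|f_\tau\|_{L^2(S)}$. Summing over the $\sim K^2$ caps and applying Cauchy--Schwarz in $\tau$ yields
\[
|Ef(x)|\le \sum_\tau|Ef_\tau(x)|\lesssim \mu^{1/2}\Bigl(\sum_\tau\|f_\tau\|_{L^2(S)}^2\Bigr)^{1/2},
\]
and integrating over $B_{R_0}$,
\[
\int_{B_{R_0}}(\text{Br}_\alpha Ef)^{3.25}\lesssim R_0^3\mu^{13/8}\Bigl(\sum_\tau\|f_\tau\|_{L^2(S)}^2\Bigr)^{13/8}.
\]

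Next, I need to close the exponent mismatch between $13/8=3/2+1/8$ here and $3/2+\varepsilon$ in \eqref{3.1.2}. For this I would use the hypothesis \eqref{3.1.1}: covering each cap $\tau$ by $\sim \mu R_0 K^{-2}$ surface balls of radius $R_0^{-1/2}$, each of which contributes $\lesssim R_0^{-1}$ to $\int|f_\tau|^2$, gives $\|f_\tau\|_{L^2(S)}^2\lesssim \mu K^{-2}$, and summing over the $\sim K^2$ caps yields $\sum_\tau \|f_\tau\|_{L^2(S)}^2\lesssim \mu$. Inserting this as a crude upper bound for the excess factor $\bigl(\sum_\tau\|f_\tau\|_{L^2(S)}^2\bigr)^{1/8-\varepsilon}$ converts the previous display into
\[
\int_{B_{R_0}}(\text{Br}_\alpha Ef)^{3.25}\lesssim R_0^3\mu^{7/4-\varepsilon}\Bigl(\sum_\tau\|f_\tau\|_{L^2(S)}^2\Bigr)^{3/2+\varepsilon}.
\]

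Finally, to match the right-hand side of \eqref{3.1.2} I would rewrite $R_0^{\delta_{trans}\log(K^\varepsilon\alpha\mu)}=(K^\varepsilon\alpha\mu)^{\delta_{trans}\log R_0}\ge \mu^{\delta_{trans}\log R_0}$, using $K^\varepsilon\alpha\ge 1$ (from $\alpha\ge K^{-\varepsilon}$). Choosing $R_0\ge e^{2/\delta_{trans}}$ then makes $\delta_{trans}\log R_0\ge 7/4$, so the $\mu^{7/4-\varepsilon}$ factor is swallowed by this piece of the right-hand side. The remaining constant $R_0^{3-\varepsilon}$ is independent of the data, so one absorbs it by taking $C_\varepsilon\ge R_0^{3-\varepsilon}$ (permissible because $R_0=R_0(\varepsilon)$ through $\delta_{trans}=\delta_{trans}(\varepsilon)$). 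The only non-automatic point is recognizing that the exponent gap $1/8$ is exactly what forces the $\mu$-saving from \eqref{3.1.1} and hence the large choice of $R_0$; all other estimates are routine $L^1$--$L^\infty$ bounds for $E$.
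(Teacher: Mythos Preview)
Your argument is correct. The overall strategy matches the paper's: use the crude pointwise bound $|Ef(x)|\le\|f\|_{L^1(S)}$ (or its cap-wise refinement), integrate over $B_{R_0}$, and then invoke hypothesis \eqref{3.1.1} to reduce the exponent on $\sum_\tau\|f_\tau\|_2^2$ from $13/8$ down to $3/2+\varepsilon$.

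Where your argument differs from the paper's is in the bookkeeping of the multiplicity $\mu$. The paper (following Guth) essentially runs the base case at $R\sim 1$ and then appeals to scaling; in that write-up the $\mu$-dependence is suppressed. You instead keep track of the $\mu$-powers explicitly and absorb them by choosing $R_0\ge e^{2/\delta_{trans}}$, so that $R_0^{\delta_{trans}\log(K^\varepsilon\alpha\mu)}\ge\mu^{\delta_{trans}\log R_0}\ge\mu^{7/4}$. This is exactly the mechanism that explains \emph{why} the base case needs ``some large $R_0$'' as stated in the lemma, a point the paper's proof leaves implicit. Your route also avoids the somewhat awkward scaling remark and the typos in the paper's version (the stray ``$R_0\le1$'' and ``$B_1$''). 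Both approaches are elementary; yours has the advantage of making transparent how the auxiliary factor $R^{\delta_{trans}\log(K^\varepsilon\alpha\mu)}$ earns its keep at the base of the induction.
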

\begin{proof}
With $\varepsilon>0$ fixed and $K=K(\varepsilon)$ to be determined later, the base case is when $R_0=R(\varepsilon)$ such that $K\leq R_0^C$ for some constant $C$ or
\begin{equation}\label{basecase}
    \displaystyle\sum_\tau\int_S|f_\tau|^2\leq R^{-1000}.
\end{equation}

If $R=R_0\leq1$, we have
$$\begin{array}{rcl}
    \int_{B_1}Br_\alpha Ef^{3.25} & \lesssim & \|f\|_{L^1}^{3.25} \\
     & \leq & \|f\|_{L^2}^{3.25} \\
     & = & \|f\|_{L^2}^{3+2\varepsilon}\|f\|_{L^2}^{3.25-3-2\varepsilon} \\
     & \leq & \|f\|_{L^2}^{3+2\varepsilon} \\
     & \lesssim & (\displaystyle\sum_\tau\int_S|f_\tau|^2)^{3/2+\varepsilon}.
\end{array}$$
The second step is by H\"older's inequality, and the third inequality is due to the assumption (\ref{3.1.1}).
\end{proof}

\begin{remark}
Guth proves the base case $R=1$. This is equivalent to the above argument after scaling.
\end{remark}

Now we suppose (\ref{basecase}) is true.

\begin{lemma}\label{smallsum}
If (\ref{basecase}) is true, then (\ref{3.1.2}) is true.
\end{lemma}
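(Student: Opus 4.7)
The plan is to observe that under hypothesis (\ref{basecase}) the total $L^2$-mass is so small that the crudest pointwise bound on $Ef$ already dominates the RHS of (\ref{3.1.2}). No wave packets, polynomial partitioning, or inductive input is needed.

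First I would produce a uniform pointwise bound on $Ef$. Cauchy--Schwarz and compactness of $S$ give $|Ef(x)|\le\int_S|f|\,d\text{vol}_S\lesssim\|f\|_{L^2(S)}$ for every $x\in\mathbb{R}^3$. To replace $\|f\|_{L^2(S)}$ by the sum $A:=\sum_\tau\int_S|f_\tau|^2$, I would use that each $\omega\in S$ lies in at most $O(\mu)$ caps $\tau$, so Cauchy--Schwarz in $\tau$ gives $|f(\omega)|^2\lesssim\mu\sum_\tau|f_\tau(\omega)|^2$ and hence $\|f\|_{L^2(S)}^2\lesssim\mu A$. The hypothesis (\ref{basecase}) then forces $\text{Br}_\alpha Ef(x)\le|Ef(x)|\lesssim(\mu A)^{1/2}\le(\mu R^{-1000})^{1/2}$ uniformly on $B_R$.

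Integrating against $|B_R|\lesssim R^3$ yields
\[
\int_{B_R}(\text{Br}_\alpha Ef)^{3.25}\lesssim R^3\mu^{13/8}A^{13/8}.
\]
I would split $A^{13/8}=A^{3/2+\varepsilon}\cdot A^{1/8-\varepsilon}$. For $\varepsilon<1/100$, (\ref{basecase}) forces $A^{1/8-\varepsilon}\le R^{-1000(1/8-\varepsilon)}=R^{-125+1000\varepsilon}$, so the LHS is at most $\mu^{13/8}R^{-122+1000\varepsilon}A^{3/2+\varepsilon}$. On the RHS of (\ref{3.1.2}), the hypotheses $\alpha\ge K^{-\varepsilon}$ and $\mu\ge 1$ yield $K^\varepsilon\alpha\mu\ge 1$, so $R^{\delta_{trans}\log(K^\varepsilon\alpha\mu)}\ge 1$ and hence the RHS is at least $C_\varepsilon R^\varepsilon A^{3/2+\varepsilon}$.

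It therefore suffices to verify $\mu^{13/8}R^{-122+1000\varepsilon}\le C_\varepsilon R^\varepsilon$. The multiplicity constraint $r\in[K^{-1},\mu^{1/2}K^{-1}]$ combined with $r\lesssim 1$ gives $\mu\lesssim K(\varepsilon)^2$, so $\mu^{13/8}$ is a constant depending only on $\varepsilon$ and can be absorbed into $C_\varepsilon$; the remaining negative power of $R$ is vastly more than enough for any $R\ge 1$. The only care needed is that $\varepsilon<1/8$ so the exponent $1/8-\varepsilon$ is positive, and that $\varepsilon$ is small enough that $-122+1000\varepsilon<0$; there is no substantive obstacle.
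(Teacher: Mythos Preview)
Your proof is correct and follows essentially the same approach as the paper: both use the crude pointwise bound $|Ef(x)|\lesssim\|f\|_{L^1(S)}$ (or $\|f\|_{L^2(S)}$), integrate over the volume $|B_R|\sim R^3$, and then exploit the smallness hypothesis $A\le R^{-1000}$ to absorb all polynomial losses in $R$. The only cosmetic difference is that the paper passes through $\sum_\tau\|f_\tau\|_{L^1}$ and applies Cauchy--Schwarz first within each cap and then across caps, whereas you go through $\|f\|_{L^2(S)}^2\lesssim\mu A$ using bounded overlap directly; both routes yield the same $R^3\,(\text{const}(\varepsilon))\,A^{13/8}$ bound and close identically.
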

\begin{proof}
\begin{equation}\label{base1}
    \int_{B_R}Br_\alpha Ef^{3.25}\lesssim R^3(\displaystyle\sum_\tau\int_S|f_\tau|)^{3.25}
\end{equation}
because $|B_R|\sim R^3$ and
$$|Br_\alpha Ef|\leq\int_S|f|\leq\displaystyle\sum_\tau\int_S|f_\tau|.$$
The last term can be estimated by
\begin{equation}\label{base2}
    \begin{array}{rcl}
    \displaystyle\sum_\tau\int_S|f_\tau| & \leq & \displaystyle\sum_\tau|\tau|^{1/2}(\int_S|f_\tau|^2)^{1/2} \\
    & \leq & (\displaystyle\sum_\tau|\tau|)^{1/2}(\displaystyle\sum_\tau\int_S|f_\tau|^2)^{1/2} \\
    & \sim & (\mu K^{-1})(\displaystyle\sum_\tau\int_S|f_\tau|^2)^{1/2},
\end{array}
\end{equation}
where the first inequality is by H\"older's inequality, the second is Cauchy-Schwarz inequality, and the last line is due to the estimate of number of caps $\tau$ comparable to $ O(\mu)$ and the radius of $\tau$ bounded by $\mu^{1/2}K^{-1}$. Since $K$ is chosen upon $\varepsilon$. One can choose such that $\mu K^{-1}\leq R_0^{ O(\varepsilon)}\leq R^{ O(\varepsilon)}$. Then (\ref{base1}), (\ref{base2}) and (\ref{basecase}) gives the estimate
$$\int_{B_R}|Br_\alpha Ef|^{3.25}\lesssim R^4(\displaystyle\sum_\tau\int_S|f_\tau|^2)^{13/8}\lesssim R^{-100}(\displaystyle\sum_\tau\int_S|f_\tau|^2)^{3/2+\varepsilon}.$$
Here $-100$ is much smaller than the exponent of $R$ in (\ref{3.1.2}). Thus, the base case is done.
\end{proof}

\subsection{Induction step}
Now let's assume (\ref{3.1.2}) is true for some large $R$ or $\displaystyle\sum_\tau\int_S|f_\tau|^2$. It suffices to prove that (\ref{3.1.2}) is also true for $2R$ or $\displaystyle\sum_\tau\int_S|f_{\tau|,new}^2\leq2\displaystyle\sum_\tau\int_S|f_\tau|^2$.

Guth \cite{restest} estimates the left hand side of (\ref{3.1.2}) $\int_{B_R}|Br_\alpha Ef|^{3.25}$ on the physical side by summing up the wave packets (tubes) corresponding to each cap $\tau$. Furthermore, by polynomial partitioning, there exists a polynomial $P$ of degree at most $D$ such that the zero set $Z(P)$ divides $\R^3$ into $\sim D^3$ components $O_i$ that equally divide $\int_{B_R}|Br_\alpha Ef|^{3.25}$, i.e.,
$$\int_{ O_i\cap B_R}|Br_\alpha Ef|^{3.25}\sim D^{-3}\int_{B_R}|Br_\alpha Ef|^{3.25}.$$

Then the tubes can be split into three cases: tubes intersecting with some cell $O_i$, those intersecting $Z(P)$ transversely, i.e., the direction of the tube lies far away from the tangent space of $Z(P)$ at the intersection, and those intersecting $Z(P)$ tangentially, i.e., the direction of the tube lies close or in the tangent space of $Z(P)$. Since tubes are not lines, but have thickness, we may consider a neighborhood $W$ of $Z(P)$ of radius equal to that of a tube and $O'_i=O_i\setminus W$ instead of $Z(P),O_i$. It is also worth noting that the said three cases are not necessarily mutually exclusive, but this does not affect the estimate.

\subsubsection{Cellular case}
Suppose $W$ contributes less than $O_i$ in $\int_{B_R}|Br_\alpha Ef|^{3.25}$. Since the integral in each cell contributes equally, we may estimate $\int_{O'_i\cap B_R}|Br_\alpha Ef|^{3.25}\sim D^{-3}\int_{B_R}|Br_\alpha Ef|^{3.25}$ for any $i$.

\begin{lemma}\label{2times}
Suppose $f$ satisfies (\ref{3.1.1}), then $f_i$ satisfies (\ref{3.1.1}).
\end{lemma}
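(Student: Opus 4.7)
The function $f_{\tau,i} = \sum_{T \in \mathbb{T}_i} f_{\tau,T}$ is obtained from $f_\tau$ by selecting only those wave packets whose tubes meet the reduced cell $O_i'$. Since the wave packet decomposition is essentially a near-orthogonal decomposition of $f_\tau$, selecting a sub-family of wave packets should only decrease, up to negligible error, the local $L^2$ mass. The plan is therefore to localize on an arbitrary ball $B(\omega, R^{-1/2}) \cap S$, exploit the finite spatial overlap of the $R^{-1/2}$-caps $\theta$, and invoke essential orthogonality within each $\mathbb{T}(\theta)$ to transfer the averaged $L^2$ bound from $f_\tau$ over to $f_{\tau,i}$.

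First I would fix $\tau$ and $\omega \in S$ and localize. Because each wave packet $f_{\tau,T}$ with $T \in \mathbb{T}(\theta)$ is supported in $3\theta$, only the $O(1)$ caps $\theta$ whose enlargement meets $B(\omega, R^{-1/2})$ contribute to $f_{\tau,i}$ on this ball. I would group the wave packets by their source cap, writing $f_{\tau,i} = \sum_\theta g_{\tau,\theta,i}$ with $g_{\tau,\theta,i} := \sum_{T \in \mathbb{T}_i \cap \mathbb{T}(\theta)} f_{\tau,T}$. Cauchy--Schwarz in the (bounded) sum over $\theta$ then gives
$$\int_{B(\omega,\, R^{-1/2}) \cap S} |f_{\tau,i}|^2 \lesssim \sum_{\theta :\, 3\theta \cap B(\omega,\, R^{-1/2}) \neq \emptyset} \int_S |g_{\tau,\theta,i}|^2.$$

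Next I would handle each cap $\theta$ individually. Expanding $|g_{\tau,\theta,i}|^2$ yields diagonal terms, ``overlapping'' cross-terms from pairs $T_1, T_2 \in \mathbb{T}(\theta)$ that share support, and ``far'' cross-terms from disjoint $T_1, T_2$. The essential orthogonality property listed in Section \ref{wavepacket} bounds each far cross-term by $R^{-1000}\int_\theta |f_\tau|^2$, and the finite overlap of tubes in $\mathbb{T}(\theta)$ bounds the overlapping cross-terms by $O\bigl(\sum_T \int_S |f_{\tau,T}|^2\bigr)$. Using property 5 of the wave packet decomposition, which is monotone in the subset $\mathbb{T}_i \cap \mathbb{T}(\theta) \subseteq \mathbb{T}(\theta)$, all of this is bounded by $O\bigl(\int_\theta |f_\tau|^2\bigr)$, with a constant independent of $i$. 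Since the relevant caps $\theta$ all lie inside a slightly enlarged ball $B(\omega, C R^{-1/2}) \cap S$, which in turn is covered by $O(1)$ balls of radius $R^{-1/2}$, the hypothesis (\ref{3.1.1}) applied to $f_\tau$ yields $\oint_{B(\omega,\, R^{-1/2}) \cap S} |f_{\tau,i}|^2 \leq C$ for some absolute constant $C$.

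The main obstacle is the quantitative control of this constant. The lemma's label ``2times'' suggests the sharper target $C = 2$; obtaining it requires careful bookkeeping, using the smoothed partition of unity $\phi_T$ rather than sharp cutoffs and tracking the overlap multiplicities of the $\theta$'s and of the tubes $T \in \mathbb{T}(\theta)$ precisely. If a clean bound of $2$ is unavailable in a given step, a constant-factor loss per cellular iteration could alternatively be tolerated by tuning $K$, $D$, and $\delta_{trans}$ so that the compounded loss over the $O(\log R)$ levels of induction is absorbed by the $R^\varepsilon$ slack in (\ref{3.1.2}); this is the standard trick to close the induction in this framework.
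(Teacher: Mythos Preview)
Your argument is correct and is essentially an unpacking of the paper's one-line proof. The paper simply writes
\[
\oint_{B(\omega,R^{-1/2})\cap S}|f_{\tau,i}|^2 \;\lesssim\; \oint_{B(\omega,10R^{-1/2})\cap S}|f_\tau|^2 \;\lesssim\; 1,
\]
citing ``the wave packet decomposition''; your grouping by caps $\theta$, use of the bounded overlap, and appeal to essential orthogonality and property~5 are exactly what underlies that first inequality. One minor misreading: the label ``2times'' does not refer to a target constant $C=2$ in the averaged bound. It refers to the additional observation recorded at the end of the paper's proof, namely that after rescaling by a constant $C\in[1/2,1]$ one has $\sum_\tau\int_S|Cf_{\tau,i}|^2\le\tfrac12\sum_\tau\int_S|f_\tau|^2$, which is the halving that drives the induction on $\sum_\tau\|f_\tau\|_2^2$. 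The paper, like you, only obtains $\lesssim 1$ in \eqref{3.1.1} and absorbs the constant by this rescaling, so your worry about the precise value of $C$ is well placed but already accounted for in the overall scheme.
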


\begin{proof}
Let $\omega\in S$. Then by the wave packet decomposition, we have
$$\oint_{B(\omega,R^{-1/2})\cap S}|f_{\tau,i}|^2\lesssim\oint_{B(\omega,10R^{-1/2})\cap S}|f_\tau|^2\lesssim1,$$
where $f_{\tau,i}=\displaystyle\sum_{T\in\T_\tau\cap\T_i}f_{\tau,T}$. Hence $f_i$ satisfies the assumption (\ref{3.1.1}).

Moreover, we may observe that $\displaystyle\sum_\tau\int_S|Cf_{\tau,i}|^2\leq\frac{1}{2}\displaystyle\sum_\tau\int_S|f_{\tau}|^2$ for some constant $C\in[1/2,1]$.
\end{proof}

\begin{lemma}\label{cell}
If $x\in O'_i$ and $R$ is large enough, then
\begin{equation}\label{cellind}
    Br_\alpha Ef(x)\leq2Br_{2\alpha}Ef_i(x)+Err.
\end{equation}
The error term $Err\lesssim R^{-900}\displaystyle\sum_\tau\|f_\tau\|_2$.
\end{lemma}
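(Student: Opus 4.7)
The plan is to exploit the spatial localization of the wave packets in $\mathbb{T}\setminus\mathbb{T}_i$: by the very definition of $\mathbb{T}_i$, each such tube is disjoint from $O_i'$, so for $x\in O_i'$ the contribution $Ef_T(x)$ is negligible by the second wave packet property. This means that $Ef(x)\approx Ef_i(x)$ and $Ef_\tau(x)\approx Ef_{\tau,i}(x)$ up to errors polynomially small in $R$ (times $\sum_\tau\|f_\tau\|_2$), after which the $\alpha$-broadness of $x$ for $Ef$ transfers to $2\alpha$-broadness for $Ef_i$ with essentially no loss.

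First I would set up the approximation step. Fix $x\in O_i'$. By the third bullet of the wave packet properties, $|Ef(x)-\sum_{T\in\mathbb{T}}Ef_T(x)|\leq R^{-1000}\|f\|_2$. Split $\mathbb{T}=\mathbb{T}_i\sqcup(\mathbb{T}\setminus\mathbb{T}_i)$; for every $T\notin\mathbb{T}_i$ we have $T\cap O_i'=\emptyset$, hence $x\notin T$, and the second bullet gives $|Ef_T(x)|\leq R^{-1000}\|f\|_2$. Since $|\mathbb{T}|$ is polynomial in $R$, summing these contributions and using $\|f\|_2\leq\sum_\tau\|f_\tau\|_2$ yields
$$|Ef(x)-Ef_i(x)|\leq R^{-950}\sum_\tau\|f_\tau\|_2,$$
and the identical argument applied cap by cap gives $|Ef_\tau(x)-Ef_{\tau,i}(x)|\leq R^{-950}\|f_\tau\|_2$ for every $\tau$.

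Second, I would transfer the broadness condition via a case split on whether $|Ef(x)|$ exceeds the error budget $\eta:=R^{-900}\sum_\tau\|f_\tau\|_2$. If $|Ef(x)|\leq\eta$, then $\text{Br}_\alpha Ef(x)\leq\eta$, which is absorbed into the $Err$ term of \eqref{cellind}. Otherwise the Step 1 errors are at most $R^{-40}|Ef(x)|$, so $|Ef_i(x)|\geq\tfrac{1}{2}|Ef(x)|$, and using the $\alpha$-broadness of $x$ for $Ef$ we get
$$|Ef_{\tau,i}(x)|\leq|Ef_\tau(x)|+R^{-950}\|f_\tau\|_2\leq\alpha|Ef(x)|+R^{-950}\|f_\tau\|_2\leq 2\alpha|Ef_i(x)|,$$
where the final inequality uses that $\alpha\geq K^{-\varepsilon}$ is independent of $R$ and dominates any negative power of $R$. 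Hence $x$ is $2\alpha$-broad for $Ef_i$, so $\text{Br}_{2\alpha}Ef_i(x)=|Ef_i(x)|$, and combining with Step 1,
$$\text{Br}_\alpha Ef(x)=|Ef(x)|\leq|Ef_i(x)|+\eta\leq\text{Br}_{2\alpha}Ef_i(x)+\eta,$$
which is stronger than \eqref{cellind}.

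There is no substantive obstacle: the only subtle point is that additive perturbations do not in general preserve multiplicative broadness conditions, which forces the case split on $|Ef(x)|$ versus the error. The split is clean because $\alpha\geq K^{-\varepsilon}$ is a positive constant in $R$ while the wave packet approximation error is an arbitrary negative power of $R$, so either $|Ef(x)|$ is itself absorbed into the error term or the relative perturbation in the broadness condition is at most $R^{-40}$, safely below the $\alpha\to 2\alpha$ slack.
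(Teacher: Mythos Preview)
Your proof is correct and follows essentially the same approach as the paper's: both assume without loss of generality that $|Ef(x)|$ exceeds the error threshold (otherwise the bound is trivial), use the rapid decay of $Ef_T(x)$ for $T\notin\mathbb{T}_i$ to approximate $Ef_\tau\approx Ef_{\tau,i}$ and $Ef\approx Ef_i$ up to $O(R^{-990})$ errors, and then run the same chain of inequalities to upgrade $\alpha$-broadness for $Ef$ to $2\alpha$-broadness for $Ef_i$. One minor presentational remark: the crude consequence $|Ef_i(x)|\geq\tfrac{1}{2}|Ef(x)|$ alone would not close the final inequality in your displayed chain (it leaves no room for the additive $R^{-950}\|f_\tau\|_2$), but since you already recorded the sharper relative error $\leq R^{-40}|Ef(x)|$, the step is justified as you indicate.
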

\begin{remark}
$Err$ arises from wave packet decomposition and can be easily dominated.
\end{remark}
Let's first prove (\ref{3.1.2}) by Lemma \ref{cell} when the cellular part is dominating.

\begin{prop}\label{celltothm}
Suppose satisfy the assumption of Theorem \ref{3.1} holds for some large enough $R$ and $f$, and that (\ref{3.1.2}) holds for $R$ and $f_i$ if (\ref{3.1.1}) is satisfied, then (\ref{3.1.2}) holds for the same $R$ and $f$.
\end{prop}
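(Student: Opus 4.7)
The plan is to leverage the cellular structure produced by the polynomial partitioning together with the induction hypothesis applied to $f_i$ at a strictly smaller value of the parameter $\sum_\tau\int|f_\tau|^2$. The combinatorial loss from the $\sim D^3$ cells will be beaten by the gain in the $L^2$ mass carried by a well-chosen $f_i$. First, I apply the polynomial partitioning theorem with weight $\chi_{B_R}(\text{Br}_\alpha Ef)^{3.25}$ and degree $D=R^{\delta_{deg}}$ to obtain $P$ dividing $\mathbb{R}^3\setminus Z(P)$ into $\sim D^3$ cells $O_i$ with comparable integrals. Since we are in the cellular case, the tube-neighborhood of $Z(P)$ contributes less than half of $\int_{B_R}(\text{Br}_\alpha Ef)^{3.25}$, so for every $i$ one has $\int_{B_R}(\text{Br}_\alpha Ef)^{3.25}\lesssim D^3\int_{O_i'}(\text{Br}_\alpha Ef)^{3.25}$.

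Next, I invoke Lemma \ref{cell} on each $O_i'$ to replace $\text{Br}_\alpha Ef$ by $2\,\text{Br}_{2\alpha}Ef_i$, plus the trivially dismissable error $R^{-900}\sum_\tau\|f_\tau\|_2$, yielding
$$\int_{B_R}(\text{Br}_\alpha Ef)^{3.25}\lesssim D^3\int_{O_i'}(\text{Br}_{2\alpha}Ef_i)^{3.25}$$
for every $i$. To exploit this, I choose a favourable $i$ by pigeonholing: each tube $T\in\mathbb{T}$ lies in at most $D+1$ of the families $\mathbb{T}_i$, since its core meets $Z(P)$ in at most $D$ points and is cut into at most $D+1$ segments by the cells. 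Combining this with the essential orthogonality of the wave packets gives $\sum_i\sum_\tau\int_S|f_{\tau,i}|^2\lesssim D\sum_\tau\int_S|f_\tau|^2$, so there is an index $i$ with $\sum_\tau\int_S|f_{\tau,i}|^2\lesssim D^{-2}\sum_\tau\int_S|f_\tau|^2$.

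By Lemma \ref{2times}, this $f_i$ still satisfies the averaged $L^2$ hypothesis (\ref{3.1.1}) up to a harmless constant, so the inductive statement (\ref{3.1.2}) applies to it at the smaller value of the induction parameter. Substituting in gives a bound of the form
$$C_\varepsilon R^\varepsilon\,D^3\,\bigl(D^{-2}\textstyle\sum_\tau\int|f_\tau|^2\bigr)^{3/2+\varepsilon}R^{\delta_{trans}\log(K^\varepsilon\cdot 2\alpha\cdot\mu)}.$$
The combinatorial factor collapses as $D^{3-3-2\varepsilon}=R^{-2\varepsilon\delta_{deg}}$, while the doubled broadness parameter produces merely an extra multiplicative $R^{\delta_{trans}\log 2}$, a harmless constant.

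The main obstacle, though ultimately mild, is the bookkeeping check that the gain $R^{-2\varepsilon\delta_{deg}}$ comfortably absorbs \emph{all} the accumulated losses: the implicit constants from Lemma \ref{cell}, from Lemma \ref{2times}, from the essential orthogonality, and from the $\alpha\mapsto 2\alpha$ doubling. This is precisely the reason for the parameter hierarchy $\delta_{trans}\ll\delta_{deg}\ll\delta\ll\varepsilon$ together with $K=e^{\varepsilon^{-10}}$ fixed earlier: under these choices the net exponent on $R$ is strictly smaller than $\varepsilon+\delta_{trans}\log(K^\varepsilon\alpha\mu)$, and the induction closes with room to spare in the cellular case.
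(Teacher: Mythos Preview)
Your proposal is correct and follows essentially the same route as the paper: polynomial partitioning, Lemma~\ref{cell} to pass to $\text{Br}_{2\alpha}Ef_i$, the pigeonhole via the $D+1$ cell-intersection bound to select a good $i$, verification of (\ref{3.1.1}) through Lemma~\ref{2times}, and closure of the induction using $D^{-2\varepsilon}$ to absorb the $R^{O(\delta_{trans})}$ loss. The bookkeeping you flag is exactly what the paper handles, and your parameter discussion matches theirs.
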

\begin{remark}
If (\ref{3.1.2}) does not hold for $f_i$, we may apply polynomial partitioning on $f_i$ with the same degree $D$ to obtain a smaller $\displaystyle\sum_\tau\int_S|f_\tau|^2$. Since the shrink is by a factor of $1/2$ after each partitioning, it will eventually drop below $R^{-1000}$, reducing the problem to the base case.
\end{remark}

\begin{proof}
$$\int_{B_R}Br_\alpha Ef^{3.25}\lesssim D^3\int_{B_R\cap O'_i}Br_\alpha Ef^{3.25}\lesssim D^3\int_{B_R}|Br_{2\alpha}Ef_i|^{3.25}+Err,$$
where the first inequality is by the polynomial partitioning, the second by Lemma \ref{cell} with the error term $Err\sim R^{-1000}(\displaystyle\sum_{\tau}\|f_\tau\|_2)^{3.25}$. By Cauchy-Schwarz inequality, we have $Err\lesssim R^{-1000}\mu^{13/8}(\displaystyle\sum_\tau\int_S|f_\tau|^2)^{(3/2)+(1/13)}$, which is much smaller than the right hand side of (\ref{3.1.2}) because by the assumption (\ref{3.1.1}), we have $\displaystyle\sum_\tau\int_S|f_\tau|^2\leq1$ and $\varepsilon\leq1/13$.

Now it suffices to show that $D^3\int_{B_R}|Br_{2\alpha}Ef_i|^{3.25}$ is also bounded above by the right hand side of (\ref{3.1.2}).

By simple algebra for the central axis of each tube, each $T$ intersects at most $(D+1)$ of the reduced cells $O'_i$, so $\displaystyle\sum_i\int|f_{\tau,i}|^2\lesssim D\int|f_\tau|^2$, which implies that there exists some $i$ such that $\int|f_{\tau,i}|^2\lesssim D^{-2}\int|f_\tau|^2$ because the number of cells is $\sim D^3$. Hence
\begin{equation}\label{fitof}
    \displaystyle\sum_\tau\int|f_{\tau,i}|^2\lesssim D^{-2}\displaystyle\sum_\tau\int|f_\tau|^2.
\end{equation}
As mentioned above, the choice of $i$ does not affect the estimate of $\int_{B_R\cap O'_i}Br_\alpha Ef_i^{3.25}$, so let's fix this $i$.

By Lemma \ref{celltothm}, (\ref{3.1.1}) holds for $f_i$ and $R$. By the induction assumption, (\ref{3.1.2}) holds for $f_i$ and $R$. Then we have
\begin{equation}\label{fitofind}
    \int_{B_R}Br_\alpha Ef^{3.25}\lesssim C_\varepsilon R^\varepsilon R^{\delta_{trans}\log(K^\varepsilon2\alpha\mu)}(\displaystyle\sum_\tau\int|f_{\tau,i}|^2)^{(3/2)+\varepsilon}.
\end{equation}
By (\ref{fitofind}) and (\ref{fitof}), we obtain
$$\int_{B_R}Br_\alpha Ef^{3.25}\lesssim (D^{-2\varepsilon}R^{ O(\delta_{trans})})C_\varepsilon R^\varepsilon R^{\delta_{trans}\log(K^\varepsilon\alpha\mu)}(\displaystyle\sum_\tau\int|f_\tau|^2)^{(3/2)+\varepsilon}.$$

By choosing $D,\delta_{trans}$ carefully, we may obtain a constant bound for the bracketed term. Thus, the induction is closed.
\end{proof}

\begin{remark}
Guth \cite{Guth_polypart_2} provides an alternative proof of the cellular case by induction on $R$.
\end{remark}

Now let's turn to the proof of Lemma \ref{cell}.

\begin{proof}[Proof of Lemma \ref{cell}]
We may assume $|Br_\alpha Ef(x)|\geq R^{-1000}(\displaystyle\sum_\tau\|f_\tau\|_2)$ and that $x$ is $\alpha$-broad for $Ef$. Otherwise, the the inequality trivially holds. Now it suffices to prove that $x$ is also $2\alpha$-broad $Ef_i$, i.e., for any cap $\tau$, we need $|Ef_{\tau,i}(x)|\leq2\alpha|Ef_i(x)|$.

By the definition of broad points, we have $|Br_\alpha Ef(x)|\leq|Br_{2\alpha}Ef(x)|$.
By wave packet decomposition, for any $x\in O'_i$, we have
\begin{equation}\label{cell1}
    \begin{array}{rcl}
    Ef_\tau(x) & = & \displaystyle\sum_{T\in\T}Ef_{\tau,T}(x)+ O(R^{-1000}\|f_\tau\|_2) \\
    & = & Ef_{\tau,i}(x)+ O(R^{-990}\|f_\tau\|_2).
\end{array}
\end{equation}
The second equality is due to the fact that when a tube $T$ is away from $ O'_i$, $|Ef_{\tau,T}(x)|\leq R^{-1000}\|f_\tau\|_2$. Furthermore, the number of such tubes is at most the number of tubes corresponding to $\tau$, which is loosely bounded by $R^{10}$.
Summing over $\tau$, we obtain
\begin{equation}\label{cell2}
    Ef(x)=Ef_i(x)+ O(R^{-990}\displaystyle\sum_\tau\|f_\tau\|_2).
\end{equation}
Hence
\begin{equation}\label{cell3}
    |Ef_i(x)|=|Ef(x)|- O(R^{-990}\displaystyle\sum_\tau\|f_\tau\|_2)\geq\frac{1}{2}R^{-900}\displaystyle\sum_\tau\|f_\tau\|_2.
\end{equation}
Therefore,
\begin{equation*}
    \begin{array}{rcl}
    |Ef_{\tau,i}(x)| & \leq & |Ef_\tau(x)|+ O(R^{-990}\|f_\tau\|_2) \\
    & \leq & \alpha|Ef(x)|+ O(R^{-990}\|f_\tau\|_2) \\
    & \leq & \alpha|Ef_i(x)|+ O(R^{-990}\displaystyle\sum_\tau\|f_\tau\|_2) \\
    & \leq & 2\alpha|Ef_i(x)|.
\end{array}
\end{equation*}
The first and third inequalities are due to (\ref{cell1}) and (\ref{cell2}), the second due to the assumption that $x$ is $\alpha$-broad for $f$, and the last one due to (\ref{cell3})
\end{proof}

\subsubsection{Transverse case}
Then let's consider the case when $\int_{W\cap B_R}Br_\alpha Ef^{3.25}$ dominates. In this case, we are proving by induction on $R$. We first cover $W$ with balls $B_j$ of radius $R^{1-\delta}$ for some small $\delta<\varepsilon$. Let's also recall the following two notations
$$f_{j,trans}\coloneqq\displaystyle\sum_\tau\displaystyle\sum_{T\in\T_{j,trans}}f_{\tau,T},\quad f_{j,tang}\coloneqq\displaystyle\sum_\tau\displaystyle\sum_{T\in\T_{j,tang}}f_{\tau,T},$$
where $\T_{j,trans}$ is the set of tubes that intersect $W$ via $B_j$ transversely and $\T_{j,tang}$ tangentially. Then we may define
$$f_{\tau,j,trans}=\displaystyle\sum_{T\in\T_{j,trans}}f_{\tau,T},\quad f_{I,j,trans}\coloneqq\displaystyle\sum_{\tau\in I}f_{\tau,j,trans},$$
where $I$ is an arbitrary subset of the set of caps $\tau$. 

\begin{lemma}\label{zerodom}
If $x\in B_j\cap W$ and $\alpha\mu\leq10^{-5}$, then
\begin{equation}\label{zerodomineq}
    Br_\alpha|Ef(x)|\leq2(\displaystyle\sum_IBr_{2\alpha}|Ef_{I,j,trans}(x)|+K^{100}Bil(Ef_{j,tang})(x)+Err),
\end{equation}
where
$$Bil(Ef_{j,tang})\coloneqq\displaystyle\sum_{\tau_1,\tau_2\text{ non-adjacent}}|Ef_{\tau_1,j,tang}|^{1/2}|Ef_{\tau_2,j,tang}|^{(1/2)}.$$
Two capt $\tau_2,\tau_2$ are said to be non-adjacent if their distance is $\geq K^{-1}$.

Similarly, the error term $Err\coloneqq O(R^{-900}\displaystyle\sum_\tau\|f_\tau\|_2^)$.
\end{lemma}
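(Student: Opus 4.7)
\medskip

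\noindent\textbf{Proof proposal for Lemma \ref{zerodom}.} The plan is to fix an $\alpha$-broad point $x \in B_j \cap W$ (otherwise the left-hand side is $0$ and the inequality is trivial) and, for that specific $x$, to produce a single subset $I = I(x) \subseteq \{\tau\}$ which, together with the bilinear tangential term, controls $|Ef(x)|$ up to the stated error. Since the right-hand side sums $Br_{2\alpha}|Ef_{I,j,trans}(x)|$ over \emph{all} $I$, it suffices to exhibit one such $I(x)$ for each $x$.

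I would define
$$I(x) := \{\tau : |Ef_{\tau,j,tang}(x)| \leq K^{-100}|Ef(x)|\}$$
and split into two cases according to whether the complement contains two non-adjacent caps. If $I(x)^c$ contains $K^{-1}$-separated caps $\tau_1,\tau_2$, then by definition of $I$ both tangential pieces exceed $K^{-100}|Ef(x)|$, and a single term of the bilinear sum yields
$$Bil(Ef_{j,tang})(x) \geq |Ef_{\tau_1,j,tang}(x)|^{1/2}|Ef_{\tau_2,j,tang}(x)|^{1/2} > K^{-100}|Ef(x)|,$$
giving $|Ef(x)| < K^{100}Bil(Ef_{j,tang})(x)$ and closing this case.

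In the other case, every pair in $I(x)^c$ is within $K^{-1}$, so all of $I(x)^c$ lies in a $K^{-1}$-neighborhood of a single cap; the multiplicity assumption for the cover then forces $|I(x)^c| \lesssim \mu$. Using the wave-packet decomposition, write
$$Ef(x) = \sum_{\tau \in I} Ef_{\tau,j,tang}(x) + \sum_{\tau \in I} Ef_{\tau,j,trans}(x) + \sum_{\tau \in I^c} Ef_\tau(x) + \text{Err},$$
where the error is the usual $R^{-900}\sum_\tau \|f_\tau\|_2$. The first sum is bounded by $O(K^2)\cdot K^{-100}|Ef(x)|$, negligible. The last sum is controlled by $\alpha$-broadness: each $|Ef_\tau(x)| \leq \alpha|Ef(x)|$ and there are $O(\mu)$ terms, so the sum is $\lesssim \alpha\mu |Ef(x)| \leq \tfrac{1}{10}|Ef(x)|$ by the hypothesis $\alpha\mu \leq 10^{-5}$. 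Rearranging yields $|Ef(x)| \leq 2|Ef_{I,j,trans}(x)| + \text{Err}$.

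It remains to check that $x$ is $2\alpha$-broad for $Ef_{I,j,trans}$. Since $f_{I,j,trans}$ is supported only on caps $\tau' \in I$, the cap-$\tau'$ piece vanishes for $\tau' \notin I$. For $\tau' \in I$, one has
$$|Ef_{\tau',j,trans}(x)| \leq |Ef_{\tau'}(x)| + |Ef_{\tau',j,tang}(x)| + \text{err} \leq (\alpha + K^{-100})|Ef(x)| + \text{err},$$
and since $\alpha \geq K^{-\varepsilon} \gg K^{-100}$, the factor $\alpha + K^{-100}$ is at most $\alpha(1 + o(1))$; combined with $|Ef(x)| \leq 2|Ef_{I,j,trans}(x)| + \text{Err}$, this gives $|Ef_{\tau',j,trans}(x)| \leq 2\alpha|Ef_{I,j,trans}(x)| + \text{Err}$, so $x$ is indeed $2\alpha$-broad for $Ef_{I,j,trans}$ (after absorbing the harmless additive error into the stated $Err$).

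The main obstacle is the second case: the precise balancing needed there depends on three independent smallness conditions working simultaneously—the $O(K^2)$ caps in $I$ each contributing only $K^{-100}|Ef(x)|$ to the tangential sum, the $O(\mu)$ clustered caps in $I^c$ each contributing only $\alpha|Ef(x)|$ to the full sum, and the assumption $\alpha\mu \leq 10^{-5}$ that makes the latter sub-dominant. Keeping all errors (from wave-packet approximation, from the $K^{-100}$ slack in the definition of $I$, and from $\alpha$-broadness applied to $I^c$) smaller than the linear part of $|Ef(x)|$ is the delicate bookkeeping point, but everything is absorbed into the explicit factor $2$ and the final $Err$ term.
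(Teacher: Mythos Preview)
Your proposal is correct and follows essentially the same route as the paper: define $I(x)$ via the tangential threshold $K^{-100}|Ef(x)|$, split on whether $I^c$ contains two non-adjacent caps (bilinear case) or not (in which case $|I^c|\lesssim\mu$ and the transverse part of $I$ dominates), and then verify $2\alpha$-broadness for $Ef_{I,j,trans}$.

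One small numerical slip: in the final step you invoke the bound $|Ef(x)|\leq 2|Ef_{I,j,trans}(x)|+\text{Err}$ and combine it with $|Ef_{\tau',j,trans}(x)|\leq \alpha(1+o(1))|Ef(x)|$ to conclude $2\alpha$-broadness, but $2\cdot\alpha(1+o(1))$ is slightly larger than $2\alpha$. The fix is to use the tighter intermediate bound your own argument actually produces: since the $I^c$ contribution is at most $\tfrac{1}{10}|Ef(x)|$ and the tangential-$I$ contribution is $O(K^{-98})|Ef(x)|$, you get $|Ef(x)|\leq\tfrac{3}{2}|Ef_{I,j,trans}(x)|+\text{Err}$ (this is exactly what the paper records), and then $1.1\alpha\cdot\tfrac{3}{2}<2\alpha$ closes the broadness check cleanly.
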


In this section, we are dealing with the case when the first term on the right hand side of (\ref{zerodomineq}) is dominating.

First, let's introduce a combinatorial lemma that will be useful.

\begin{lemma}\label{transct}
Each tube $T\in\T$ belongs to at most $\text{Poly}(D)=R^{ O(\delta_{deg})}$ different sets $\T_{j,trans}$.
\end{lemma}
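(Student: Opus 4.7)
The plan is to fix a tube $T$ with central direction $v$ and bound the number of balls $B_j$ for which there is a transverse point of $Z(P)$ inside $2B_j\cap 10T$. Let $Y\subset Z(P)\cap 10T$ denote the set of non-singular points $z$ of $Z(P)$ at which the angle between $T_zZ(P)$ and $v$ exceeds $R^{-1/2+2\delta}$. For each $j$ with $T\in\mathbb{T}_{j,trans}$ we obtain a point $z_j\in 2B_j\cap Y$, so it suffices to bound the number of $j$'s for which $2B_j$ meets $Y$.

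First I would extract a slope bound on $Z(P)$ from the transversality condition. At any $z\in Y$, denoting the unit normal to $Z(P)$ by $n$, the angle inequality forces $|v\cdot n|\geq\sin(R^{-1/2+2\delta})\gtrsim R^{-1/2+2\delta}$. Choosing $v$-aligned coordinates centered at $z$, the implicit function theorem then writes $Z(P)$ locally as a graph $x_3=h(x_1,x_2)$ with $|\nabla h|\leq |v\cdot n|^{-1}\lesssim R^{1/2-2\delta}$. Because the cross-section of $10T$ has diameter $\sim R^{1/2+\delta}$, integrating this Lipschitz bound across the cross-section shows that any portion of $Y$ that projects injectively onto the $v^\perp$-plane has extent at most $R^{1/2-2\delta}\cdot R^{1/2+\delta}=R^{1-\delta}$ in the $v$-direction.

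Next I would count connected components of $Y$. Since $Y$ is semialgebraic inside the convex region $10T$, cut out by polynomial conditions of degree $O(D)$ (namely $P=0$, $\nabla P\neq 0$, and the angle condition in the form $(v\cdot\nabla P)^2\geq R^{-1+4\delta}|\nabla P|^2$), the Milnor--Thom bound gives at most $D^{O(1)}$ connected components. The transversality condition prevents the tangent plane from containing $v$ anywhere on $Y$, which excludes branch points of the projection $Y\to v^\perp$; hence each component of $Y$ projects locally diffeomorphically and, being connected inside a cross-sectional disk of diameter $R^{1/2+\delta}$, is a single-valued Lipschitz graph whose axial extent is controlled by the previous paragraph. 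Each component of $Y$ is therefore contained in a region of diameter $\lesssim R^{1-\delta}$.

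The conclusion then follows by a covering argument: because the balls $B_j$ form a finitely overlapping cover of $B_R$ by balls of radius $R^{1-\delta}$, at most $O(1)$ of the doubles $2B_j$ can meet any given set of diameter $\lesssim R^{1-\delta}$. Summing this $O(1)$ over the $D^{O(1)}$ components of $Y$ yields the desired bound $D^{O(1)}=R^{O(\delta_{deg})}$. The main obstacle I anticipate is making the axial-extent bound on a single component fully rigorous, since this hinges on the no-branch-point consequence of transversality to rule out multi-sheeted behavior. A safer fallback is to cover each component by finitely many graph-patches coming from a semialgebraic stratification of $Y$, introducing only an additional factor $D^{O(1)}$ which is absorbed by the final count.
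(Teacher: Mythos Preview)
Your overall strategy---count connected components of the ``transverse-angle'' set $Y$ via Milnor--Thom and then bound the $v$-extent of each component by a Lipschitz estimate---is different from Guth's, which proceeds by an induction on the dimension of the variety: one intersects $Z(P)$ with an auxiliary degree-$O(D)$ hypersurface (coming from the tube boundary or from $\partial_v P$) to produce a curve of degree $O(D^2)$, shows that every transverse $B_j$ must contain a piece of this curve, and then counts the $O(D^{O(1)})$ components of the curve inside $10T$. That reduction to a one-dimensional object sidesteps exactly the difficulty you run into.

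The genuine gap is the inference ``local diffeomorphism $+$ connected $+$ image in a disk $\Rightarrow$ single-valued graph''. This is false: a local diffeomorphism onto a simply connected base need not be injective unless it is a covering map, and nothing forces $\pi|_C$ to be proper. The helicoid already shows the failure mode---away from its axis the projection to $v^\perp$ is a local diffeomorphism, yet one connected sheet winds over the punctured disk arbitrarily many times. For algebraic $Z(P)$ the winding is capped at $D$ (a $v$-line meets $Z(P)$ in $\le D$ points), so the axial extent of a component of $Y$ is at worst $\lesssim D\cdot R^{1-\delta}$, not $R^{1-\delta}$; this still closes, but your argument as written does not establish even that. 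Your fallback is salvageable only if you use a \emph{cylindrical} decomposition adapted to $v$ and to the defining inequalities of $Y$, so that each cell is literally a graph $z=\xi(x,y)$ over a base cell contained in the cross-section, with the slope bound valid on the whole cell; a generic ``semialgebraic stratification'' does not deliver graph-patches. Even then you must control the intrinsic path length in the base cell (CAD cells are not convex), which costs another $D^{O(1)}$ via a \L ojasiewicz-type bound. All of this can be made to work, but it is substantially more machinery than Guth's route, whose virtue is that after passing to a curve the extent question becomes elementary.
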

\begin{remark}
More details about Lemma \ref{transct} can be found in Section \ref{sec:geometry}.
\end{remark}

\begin{lemma}
Suppose $W$ dominates in $\int_{B_R}Br_\alpha Ef^{3.25}$, that the transverse term or the error term dominates in (\ref{zerodomineq}), and that Theorem \ref{3.1} is true for $R^{1-\delta}$, then Theorem \ref{3.1} is also true for $R$.
\end{lemma}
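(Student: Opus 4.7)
The strategy is to reduce the integral $\int_{B_R}(\text{Br}_\alpha Ef)^{3.25}$ to integrals over the smaller balls $B_j$ of radius $R^{1-\delta}$ and invoke the inductive hypothesis. First I dispense with the case where the error term in (\ref{zerodomineq}) dominates: by definition $\text{Err}\lesssim R^{-900}\sum_\tau\|f_\tau\|_2$, so integrating $\text{Err}^{3.25}$ over $B_R$ and applying Cauchy--Schwarz together with the crude bound on the number of caps gives something vastly smaller than the right-hand side of (\ref{3.1.2}). So I may assume the transverse term dominates. Covering $W\cap B_R$ by $\lesssim R^{3\delta}$ balls $B_j$ and invoking Lemma \ref{zerodom} yields
\begin{equation*}
\int_{B_R}(\text{Br}_\alpha Ef)^{3.25}\lesssim 2^{O(K^2)}\sum_{j}\sum_{I}\int_{B_j}(\text{Br}_{2\alpha}Ef_{I,j,trans})^{3.25},
\end{equation*}
where the factor $2^{O(K^2)}$ comes from raising a sum over the $\sim 2^{K^2}$ choices of $I$ to the power $3.25$.

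Next I apply the inductive hypothesis (Theorem \ref{3.1}) at scale $R^{1-\delta}$ to each integral on the right. This requires verifying the normalization (\ref{3.1.1}) for $f_{\tau,I,j,trans}$ at the coarser averaging scale $R^{-(1-\delta)/2}$, which follows routinely from wave-packet $L^2$-orthogonality (possibly after a harmless rescaling of order $1$). The induction yields
\begin{equation*}
\int_{B_j}(\text{Br}_{2\alpha}Ef_{I,j,trans})^{3.25}\leq C_\varepsilon R^{(1-\delta)\varepsilon+(1-\delta)\delta_{trans}\log(2K^\varepsilon\alpha\mu)}\Big(\sum_\tau\int_S|f_{\tau,I,j,trans}|^2\Big)^{3/2+\varepsilon}.
\end{equation*}
To sum over $j$ I use the elementary inequality $\sum_j a_j^{p}\leq(\sum_j a_j)^{p}$ for $p\geq 1$ to absorb the $j$-sum inside the power $3/2+\varepsilon$, and then apply Lemma \ref{transct}: since each tube $T$ belongs to at most $R^{O(\delta_{deg})}$ collections $\T_{j,trans}$, wave-packet $L^2$-orthogonality gives $\sum_j\sum_\tau\int_S|f_{\tau,I,j,trans}|^2\lesssim R^{O(\delta_{deg})}\sum_\tau\int_S|f_\tau|^2$.

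Combining the above, dropping the restriction $\tau\in I$ (which only enlarges the sum), and summing the resulting $\sim 2^{K^2}$ copies over $I$, I obtain
\begin{equation*}
\int_{B_R}(\text{Br}_\alpha Ef)^{3.25}\leq C_\varepsilon 2^{O(K^2)} R^{(1-\delta)\varepsilon+(1-\delta)\delta_{trans}\log(2K^\varepsilon\alpha\mu)+O(\delta_{deg})}\Big(\sum_\tau\int_S|f_\tau|^2\Big)^{3/2+\varepsilon}.
\end{equation*}
The induction closes provided the exponent on $R$ is at most $\varepsilon+\delta_{trans}\log(K^\varepsilon\alpha\mu)$, which rearranges to $\delta\varepsilon\geq\delta_{trans}\log 2+O(\delta_{deg})$. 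With $\delta=\varepsilon^2$, $\delta_{trans}=\varepsilon^6$, $\delta_{deg}\sim\varepsilon^4$ this reads $\varepsilon^3\gtrsim\varepsilon^6\log 2+O(\varepsilon^4)$, valid for small $\varepsilon$. The constant $2^{O(K^2)}$ is absorbed into the base case by requiring $R_0\geq 2^{O(K^2)/\varepsilon}$. The principal obstacles are (i) the geometric input Lemma \ref{transct}, which bounds how many $\T_{j,trans}$ contain a given tube and is the transversal analog of a line meeting $Z(P)$ in at most $D$ points; and (ii) the careful bookkeeping of the exponent of $R$, so that the $\log 2$ cost of upgrading $\alpha\mapsto 2\alpha$ is absorbed by the gain $\delta\varepsilon$ coming from shrinking $R\mapsto R^{1-\delta}$, and so that both the $R^{O(\delta_{deg})}$ cost from Lemma \ref{transct} and the $2^{O(K^2)}$ cost from summing over $I$ fit within the parameter hierarchy.
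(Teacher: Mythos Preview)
Your proof is correct and follows essentially the same route as the paper: cover $W\cap B_R$ by balls $B_j$, apply Lemma~\ref{zerodom}, verify the normalization (\ref{3.1.1}) for $f_{\tau,j,trans}$ at the coarser scale, invoke the inductive hypothesis on each $B_j$, and then sum over $j$ and $I$ using Lemma~\ref{transct} to control the $j$-sum by $R^{O(\delta_{deg})}$. The only minor discrepancy is that the paper tracks a factor $\log 4$ rather than $\log 2$ in the broad parameter (the multiplicity $\mu$ may also pick up a constant factor under the wave-packet decomposition), but this is immaterial to the exponent bookkeeping since both contribute $O(\delta_{trans})$.
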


\begin{proof}
Suppose $f$ satisfies (\ref{3.1.1}). By the assumption and Lemma \ref{zerodom}, we have
$$\begin{array}{rcl}
    \int_{B_R}Br_\alpha Ef^{3.25} & \leq & \displaystyle\sum_j\int_{B_j}Br_\alpha Ef^{3.25} \\
    & \lesssim & \displaystyle\sum_{j,I}\int_{B_j}Br_{2\alpha}\int_{B_j}Br_{2\alpha}|Ef_{I,j,trans}(x)| \\
    & & +\displaystyle\sum_jK^{100}\int_{B_j}Bil(Ef_{j,tang})(x)+Err, \\
\end{array}$$
where $Err\sim R^{-1000}(\displaystyle\sum_\tau\|f_\tau\|_2)^{3.25}$. By the same argument as in Proposition \ref{celltothm}, if the error term dominates, (\ref{3.1.2}) trivially holds. It remains to prove the case when the first term dominates, i.e.,
$$\int_{B_R}Br_\alpha Ef^{3.25}\lesssim\displaystyle\sum_{j,I}\int_{B_j}Br_{2\alpha}Ef_{I,j,trans}^{3.25}.$$

By the wave packet decomposition,
$$\begin{array}{rl}
     & \oint_{B(\omega,R^{-(1/2)(1-\delta)})\cap S}|f_{\tau,j,trans}|^2 \\
    \lesssim & \displaystyle\sum_{\omega^\prime}\int_{B(\omega^\prime,R^{-\frac{1}{2}}\cap S)}|f_{j,trans,I,\tau}|^2 \\
    \sim & \displaystyle\sum_{\omega^\prime}R^{-1}\oint_{B(\omega^\prime,R^{-\frac{1}{2}}\cap S)}|f_{j,trans,I,\tau}|^2 \\
    \lesssim & \displaystyle\sum_{\omega^\prime}R^{-1}\oint_{B(\omega^\prime,10R^{-\frac{1}{2}}\cap S)}|f_\tau|^2 \\
    \lesssim & 1,
\end{array}$$
where $\omega^\prime$ are the centers of balls of radius $R^{-\frac{1}{2}}$ that cover $B(\omega,R^{-(1/2)(1-\delta)})$.

By the inductive assumption, and the fact that $B_j$ is of radius $R^{1-\delta}$, we have
$$\int_{B_j}Br_{2\alpha}Ef_{I,j,trans}^{3.25}\lesssim C_\varepsilon R^{(1-\delta)\varepsilon}R^{(1-\delta)\delta_{trans}\log(4\alpha\mu K^\varepsilon)}(\displaystyle\sum_\tau\int|f_{\tau,j,trans})^{(3/2)+\varepsilon}$$
Then we need to sum over $j,I$. $|I|\sim2^{K^2}$ is a constant depending on $\varepsilon$, while summing over $j$ is to count the number of $B_j$ a tube can transversely intersect in $W$, which is $R^{ O(\delta_{deg})}$ by Lemma \ref{transct}. Therefore, we obtain
$$\displaystyle\sum_{j,I}(\displaystyle\sum_\tau\int|f_{\tau,j,trans}|^2)^{(3/2)+\varepsilon}\lesssim R^{ O(\delta_{deg})}(\displaystyle\sum_\tau\int|f_\tau|^2)^{(3/2)+\varepsilon}.$$
Thus, we obtain
$$\int_{B_R}Br_\alpha Ef^{3.25}\leq R^{ O(\delta_{deg}-\delta\varepsilon+\delta_{trans})}C_\varepsilon R^\varepsilon R^{\delta_{trans}\log(\alpha\mu K^\varepsilon)}(\displaystyle\sum_\tau\int|f_\tau|^2)^{(3/2)+\varepsilon}.$$
Choosing $\delta_{deg},\delta_{trans}$ carefully, we then obtain (\ref{3.1.2}).
\end{proof}

\begin{proof}[Proof of Lemma \ref{zerodom}]
The argument is similar in spirit to that of Lemma \ref{cell}.

Suppose $x\in B_j\cap W$. We may assume that $x$ is $\alpha$-broad for $Ef$ and that $|Ef(x)|\geq R^{-900}\displaystyle\sum_\tau\|f_\tau\|_2$. Furthermore, we may consider only one subset of caps
$$I\coloneqq\{K^{-1}\text{-caps }\tau:|Ef_{\tau,j,tang}(x)|\leq K^{-100}|Ef(x)|\}.$$
Then, $I^c$ contains caps $\tau$ such that $|Ef_{\tau,j,tang}(x)|\geq K^{-100}|Ef(x)|$. If $I^c$ contains two non-adjacent caps $\tau_1,\tau_2$, then
\begin{equation}\label{zerodombil}
    |Ef(x)|\leq K^{50}|Ef_{\tau_1,j,tang}(x)|^{1/2}|Ef_{\tau_2,j,tang}(x)|^{1/2}\leq K^{100}Bil(Ef_{j,tang})(x).
\end{equation}
Then (\ref{zerodomineq} trivially holds. Thus, we may assume that any two caps in $I^c$ are adjacent, implying that $|I^c|\leq O(\mu)$ because the caps are at most $K^{-1}$ separated and the radius of each cap is at most $\mu^{1/2}K^{-1}$. Choosing $\alpha,\mu$ carefully, we have
$$\displaystyle\sum_{\tau\in I^c}|Ef_\tau(x)|\leq O(\mu\alpha)|Ef(x)|\leq(1/10)|Ef(x)|,$$
where $f_I=\displaystyle\sum_{\tau\in I}f_\tau$. Therefore, $|Ef_I(x)|\geq(9/10)|Ef(x)|$.

Next, let's decompose $Ef_I$ into tangential and transverse contributions. If $T\in\T$ and $T$ intersects $B_j\cap W$, then $T\in\T_{j,trans}\cup\T_{j,tang}$. On the other hand, if $T\cup B_j\cup W=\emptyset$, then $|f_{\tau,T}(x)= O(R^{-1000}\|f_\tau\|_2)$ by wave packet decomposition. Hence, for any cap $\tau$, we have
$$|Ef_\tau(x)|\leq|Ef_{\tau,j,trans}(x)|+|Ef_{\tau,j,tang}(x)+ O(R^{-990}\|f_\tau\|_2).$$
Summing over $\tau\in I$, we obtain
\begin{equation}\label{zerodom1}
    |Ef_I(x)|\leq|Ef_{I,j,trans}(x)|+(\displaystyle\sum_{\tau\in I}|Ef_{\tau,j,tang}(x)|)+ O(R^{-990}\|f_\tau\|_2).
\end{equation}

By definition of $I$, we have $\displaystyle\sum_{\tau\in I}|Ef_{I,j,tang}(x)|\leq K^{-98}|Ef(x)|$. Furthermore, we have $|Ef_I(x)|\geq(9/10)|Ef(x)|$, so
$$(9/10)|Ef(x)|\leq|Ef_I(x)|\leq|Ef_{I,j,trans}(x)|+K^{-98}|Ef(x)|+ O(R^{980}\displaystyle\sum_\tau\|f_\tau\|_2).$$

Moreover, by the assumed loose bound $|Ef(x)|\geq R^{-900}\displaystyle\sum_\tau\|f_\tau\|_2$ from start, we obtain $|Ef(x)|\leq(3/2)|Ef_{I,j,tang}(x)|$. Now it remains to prove that $x$ is also $2\alpha$-broad for $Ef_{I,j,tang}(x)$. For each $\tau\in I$, it suffices to prove that
\begin{equation}\label{trans2}
    |Ef_{\tau,j,trans}(x)|\leq1.1\alpha|Ef(x)|.
\end{equation}
Then $|Ef_{\tau,j,trans}(x)|\leq(33/20)\alpha|Ef_{I,j,trans}(x)|\leq2\alpha|Ef_{I,j,trans}$.

To see (\ref{trans2}), we first observe from (\ref{zerodom1}) that
$$|Ef_{\tau,j,trans}(x)|\leq|Ef_\tau(x)|+|Ef_{j,tang,\tau}(x)|+ O(R^{-990}\|f_\tau\|_2).$$
Since $\tau\in I$, we have $|Ef_{\tau,j,tang}(x)|\leq K^{-100}|Ef(x)|$ by definition of $I$. Moreover, we assume that $x$ is $\alpha$-broad for $Ef$, so
$$|Ef_{\tau,j,trans}(x)|\leq\alpha|Ef(x)|+K^{-100}|Ef(x)|+ O(R^{-990}\|f_\tau\|_2).$$

We use the assumed loose bound $|Ef(x)|\geq R^{-900}\displaystyle\sum_\tau\|f_\tau\|_2$ again and choose $\alpha\geq K^{-\varepsilon}$. Then we obtain the desired bound (\ref{trans2}).
\end{proof}

\begin{remark}
    An alternative broad norm can be applied to simplify the proof to some extent.
\end{remark}






\section{Geometry Estimates} \label{sec:geometry}

\subsection{Comments on the transversal estimate.}

As remarked earlier, Lemma \ref{lem: transverse} is conceptually straightforward. A line ``transverse'' to a degree $D$ variety (i.e. it does not lie completely inside it) intersects it in at most $D$ points. To pass to a ``thickened'' statement, one chooses a scale $R$ and a resolution $R^{1-\delta}$. The line is thickened to a $R^{1/2 + \delta}$-tube $T$, the
angle between the line and variety is thickened to $R^{-1/2 + 2\delta}$, the variety is thickened to its $R^{1/2 + \delta}$-neighborhood $W$, and intersection points with the variety are thickened to balls $B_j$ where $T$ intersects $W$ transversally. See Figure \ref{fig:transversal} for a schematic. 

The proof of Lemma \ref{lem: transverse} is technical but not as deep as Lemma \ref{lem: tangent}. Any reasonable upper bound on the number of sets $\mathbb T_{j, trans}$ containing a given tube $T$ is sufficient to close the induction in \cite[Theorem 3.1]{restest}.

The various lemmas leading up to the proof of Lemma \ref{lem: transverse} are similar enough to read like a few unwrapped steps of an induction. In the follow-up paper \cite{Guth_polypart_2}, the higher-dimensional analogue is proved with an induction on dimension (of the variety). When unwrapped in the case of an algebraic surface, one recovers the proof of Lemma \ref{lem: transverse}. See \cite[Lemma 5.7]{Guth_polypart_2} for details. 

\begin{figure}[ht]
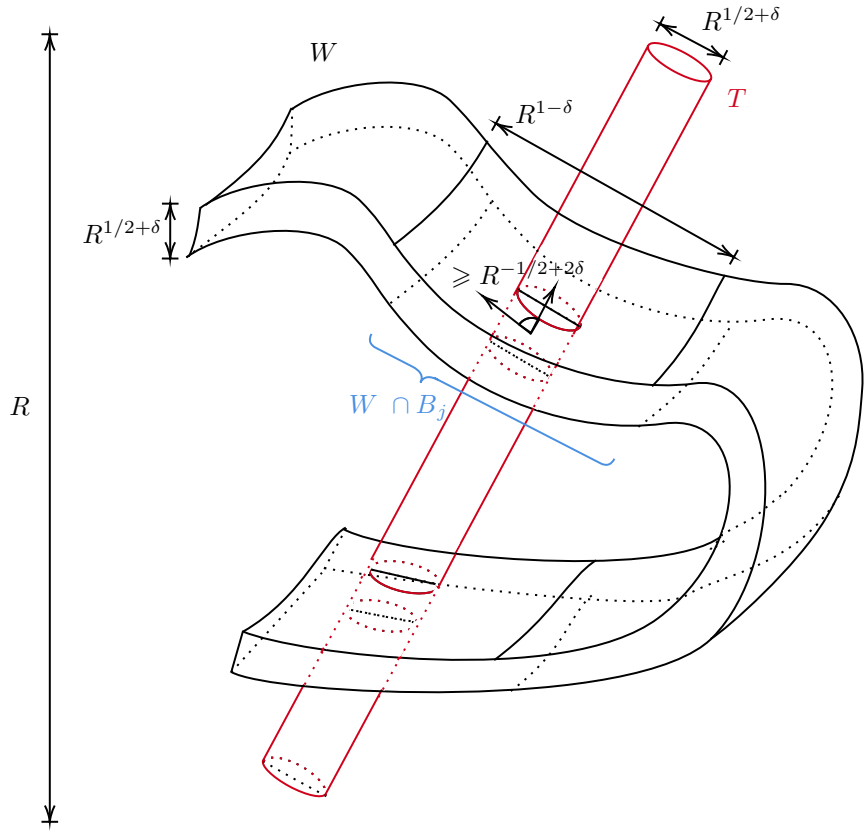
 
    \centering
    \input Pictures/Geometry_section/transversal_tikz.tex
    \caption{Schematic of Lemma \ref{lem: transverse}}
    \label{fig:transversal}
\end{figure}

\subsection{Comments the tangential estimate} 
The tangential estimate Lemma \ref{lem: tangent} plays a more precise role than Lemma \ref{lem: transverse}.
We recall the key harmonic analysis estimate Lemma 
\cite[Proposition 3.9]{restest}
\begin{equation} \label{eq: tang_estimate}
        \int_{B_j} |\mathrm{Bil}(E f_{j, tang})|^{3.25} \lesssim R^{O(\delta)} \left(\int_\tau |f_\tau|^2 \right)^{3/2}. 
\end{equation}
As Guth explains, one can run a relatively standard C\'ordoba $L^4$ argument and interpolate between $L^2$ and $L^4$ to get
\begin{equation} \label{eq: tang_estimate_1}
    \int_{B_j} |\mathrm{Bil}(E f_{j, tang})|^p \lesssim  R^{O(\delta)} R^{\frac{5}{2} - \frac{3}{4}p} (\sum_\tau \| f_{\tau, j, trans}\|_2^2)^{p/2},
\end{equation}
where $2 \leq p \leq 4$. Lemma \ref{lem: tangent} says that each $f_{\tau, j, tang}$ is supported in at most $R^{O(\delta)}R^{1/2}$ different caps $\theta$. Applying approximate orthogonality of the $f_T$, we obtain $\|f_{\tau, j, trans}\|_2^2 \lesssim R^{O(\delta)} R^{-1/2}$. Combining this with H\"older bounds the right hand side of \eqref{eq: tang_estimate_1} by the right hand side of \eqref{eq: tang_estimate}.
One can even check that the example in Section \ref{sec: counterexample} is sharp for \cite[Proposition 3.9]{restest}.

Because Lemma \ref{lem: tangent} is so fundamental to the main result of the paper, we fill in all the details of the proof along with some visual aids. 

\subsection{Proof of the tangential estimate}

Fix a scale $L$ and degree $D$ variety $Z$.
After localizing to a ball $B$ of radius $L$, we are interested in those radius 1 tubes $T$ which intersect $Z$ and stay within an angle $1/L$ of the tangent plane $T_p Z$. Call this collection $\mathbb T_{tang}$. Two tubes point in different directions if the angle between their central axes is at least $1/L$. Lemma \ref{lem: tangent} is a consequence of the following rescaled version. 
\begin{lemma} \label{lem: tang_reduced}
    Any subcollection $\mathbb T \subset \mathbb T_{tang}$ with tubes in pairwise different directions satisfies 
    \begin{equation*}
    |\mathbb T| \lesssim D^2 \log^2(L) L.
    \end{equation*}.
\end{lemma}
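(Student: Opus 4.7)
The plan is to reduce the three-dimensional tangent-tube count to a family of planar tangent-line counts by slicing $B$ by parallel planes, and then to apply a Bezout-type bound in each slice.

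First I would perform a dyadic decomposition of tube directions according to the angle they make with a fixed plane $\Pi_0$. At the cost of a factor $\log L$, I can restrict attention to tubes whose direction makes angle $\sim \theta$ with $\Pi_0$ for some $\theta \in [1/L, 1]$. Then I would foliate $B$ by planes $\Pi_t$ parallel to $\Pi_0$, with spacing $\sim 1/\theta$ so that each tube meets each slice in a set of diameter $O(1)$. Each slice $\Pi_t \cap Z$ is a plane algebraic curve $C_t \subset \Pi_t$ of degree $\leq D$, by Bezout applied to $Z$ and the hyperplane $\Pi_t$. The tangency of $T\in\mathbb{T}$ to $Z$ at some $p_T \in Z$ implies that the central ``chord'' of $T \cap \Pi_t$ is tangent, up to $O(1/L)$ error in direction, to the curve $C_t$ near $p_T$, so the problem is reduced to counting tangent lines to $C_t$.

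Second, I would prove the planar estimate that in each slice, the number of $1/L$-direction-separated tangent lines to a degree-$D$ plane curve within a disk of radius $L$ is $\lesssim D^2 \log L$. The dual curve $C_t^*$ has degree $\lesssim D(D-1)$, so the set of tangent lines is a one-dimensional algebraic subvariety of the dual plane with bounded degree; a quantitative control of its arc-length, combined with a further dyadic pigeonhole absorbing the loss from tangent lines nearly parallel to the slicing (contributing another $\log L$), gives the claimed planar bound. A double-counting argument ensures each tube is registered in only $O(1)$ slices (near $p_T$), so summing over the $\lesssim L$ slices in each direction band, and then over the $\log L$ bands, yields the total $\lesssim D^2 L \log^2 L$.

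The main obstacle is the planar tangent-line bound: naive Bezout gives only $\sim D^2$ tangent lines \emph{through a single external point}, whereas what is needed is a quantitative bound on $1/L$-separated tangent lines across the entire disk, requiring careful use of projective duality and arc-length estimates on $C_t^*$. Further technical wrinkles include avoiding the singular locus of $Z$ (handled by choosing a generic slicing direction), ensuring that tangent points $p_T$ are recorded in only $O(1)$ slices (to avoid double counting), and handling tubes almost parallel to the slicing. These degenerate cases are the origin of the second $\log L$ factor, and reconciling them cleanly with the Bezout count is the most delicate part of the argument.
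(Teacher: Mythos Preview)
Your approach is very different from the paper's, which runs a Wolff hairbrush argument combined with Wongkew's volume bound: writing $|\mathbb{T}| = \beta L$, one uses Wongkew to bound the number of unit cubes in $N_{O(1)}(Z) \cap B$ by $\lesssim DL^2$, then uses Cauchy--Schwarz and a dyadic pigeonhole on the angle $\angle(T_1,T_2)$ to extract a hairbrush $H$ with $\gtrsim \beta D^{-1}\theta L(\log L)^{-1}$ bristles at angle $\sim\theta$ to a fixed stem. A 2D Kakeya estimate lower-bounds $|H|$, while Wongkew applied to the $L\times \theta L\times \theta L$ cylinder containing $H$ gives $|H|\lesssim D\theta L^2$; comparing yields $\beta \lesssim D^2\log^2 L$. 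No slicing or projective duality enters.

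Your slicing approach has a genuine gap: the planar tangent-line bound you assert is false. You claim that a degree-$D$ plane curve in a disk of radius $L$ admits $\lesssim D^2\log L$ tangent lines with pairwise $1/L$-separated directions. But already a circle (degree $2$) has a tangent line in \emph{every} direction, hence $\sim L$ such tangent lines, not $O(\log L)$. The dual-curve degree $D(D-1)$ controls the number of tangent lines through a fixed point, or the number of tangencies in a fixed direction; it gives no bound on the number of distinct tangent \emph{directions}, which is governed by total curvature and can be $\sim L$ even for $D=2$. With the correct planar count, summing over slices gives only the trivial $|\mathbb{T}|\lesssim L^2$. There is also a problem with the reduction itself: $v(T)$ lying near the $2$-plane $T_pZ$ does not force the trace of $T$ in $\Pi_t$ to align with the tangent line $T_pZ\cap\Pi_t$ of $C_t$ at $p$, since $v(T)$ may be any direction in $T_pZ$ and its projection to $\Pi_t$ is generically transverse to $T_pZ\cap\Pi_t$. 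So tangent tubes do not slice to tangent lines, and the planar reduction does not go through as stated.
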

To recover Lemma \ref{lem: tangent}, choose $L = R^{1/2 - 2 \delta}$ and $D = R^{\delta_{deg}}$. Scaling the ball $B$ by $R^{1/2 + \delta}$, we see 
that the subcollection $\mathbb T \subset \mathbb T_{j, tang}$ which is $R^{-1/2 + 2 \delta}$ angle separated has $|\mathbb T| \lesssim D^2 \log^2(R^{1/2 - 2 \delta}) R^{1/2 - 2 \delta} = R^{1/2 + O(\delta)}$. Let $\mathbb T'$ be the tubes 
representing each direction in $\mathbb T_{tang}$. These are  $R^{-1/2}$ angle separated since the $R^{-1/2}$-caps of the paraboloid are $R^{-1/2}$ angle separated. 
For each tube  $ T \in \mathbb T$, there can be at most $R^{O(\delta)}$ tubes which are $R^{-1/2}$-separated from each other but not $R^{-1/2 + 2 \delta}$ separated from $T$. Thus 
$|\mathbb T'| \leq R^{O(\delta)} |\mathbb T|$. 

\begin{remark}[Lemma \ref{lem: tang_reduced} is sharp]
    As applied in \cite{restest}, the degree $D$ is small compared to $L$. So the first test of Lemma \ref{lem: tang_reduced} should take $Z$ to be low degree, like the hyperplane $Z = \{x_3 = 0\}$. 
    Take a collection $\mathbb T$ of $1/L$-angle separated 1-tubes whose central axes lie on $Z$. There are $L$ such tubes so Lemma \ref{lem: tang_reduced} is sharp at $D = 1$ up to factors of $\log L$. 
    It turns out that the analogous estimate in higher dimensions is far more difficult, but it has led to the most progress on restriction in higher dimension. We will discuss this later in the section.
\end{remark}

The two main ingredients of the proof are
\begin{enumerate}
    \item The Wolff hairbrush argument. 
    \item An estimate on the volume of the neighborhood of a variety, by Wongkew \cite{volneigh}. 
\end{enumerate}

Item 1 is an argument introduced by Wolff \cite{wolff_improved_kakeya_hairbrush} in the study of the Kakeya maximal function. It will be evident in the proof of Lemma \ref{lem: tang_reduced} why this argument is suggestively named the \emph{hairbrush} argument. Item 2 is: 

\begin{theorem}[Wongkew \cite{volneigh}] \label{thm: wongkew}
    Let $R$ be an $n$-dimensional rectangular grid of unit cubes with dimension $R_1 \times \cdots \times R_n$, where $1 \leq R_1 \leq \cdots \leq R_n$. Suppose $P$ is a non-zero polynomial of degree $D$. Then the number of cubes of the grid that intersect $Z(P)$ is at most $C_n D \prod_{j=2}^n R_j$. 
\end{theorem}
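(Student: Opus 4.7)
The plan is to prove the theorem by induction on the dimension $n$. The base case $n=1$ is immediate: a nonzero univariate polynomial of degree $D$ has at most $D$ roots, each lying in at most two unit intervals, so at most $2D$ intervals of the grid intersect $Z(P)$. This gives $C_1 = 2$, with the empty product $\prod_{j=2}^1 R_j$ understood as $1$.

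For the inductive step, I would slice the grid into $\prod_{j=2}^n R_j$ axis-parallel lines in the direction $e_1$ of the shortest side, each consisting of $R_1$ unit cubes. Classify each line $\ell$ as \emph{good} if $P|_\ell$ is a nonzero (univariate) polynomial, or \emph{bad} if $P|_\ell \equiv 0$. On a good line, $P|_\ell$ has degree at most $D$ and hence at most $D$ zeros, so at most $2D$ of its cubes meet $Z(P)$. Summing over all good lines gives a contribution of at most $2D \prod_{j=2}^n R_j$.

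For the bad lines, write
\[
P(x_1,\ldots,x_n) = \sum_{k=0}^{k_{\max}} P_k(x_2,\ldots,x_n) \, x_1^k,
\]
where $P_{k_{\max}}$ is the nonzero leading coefficient (in $x_1$). A line through $(0, a_2, \ldots, a_n)$ parallel to $e_1$ is bad precisely when $P_k(a_2,\ldots,a_n) = 0$ for every $k$; in particular every bad line projects to a point of $Z(P_{k_{\max}})$ in the $(n-1)$-dimensional grid $R_2 \times \cdots \times R_n$. Since $P_{k_{\max}}$ is a nonzero polynomial in $n-1$ variables of degree at most $D$, and $R_2 \leq \cdots \leq R_n$ is already sorted, the inductive hypothesis bounds the number of bad lines by $C_{n-1} D \prod_{j=3}^n R_j$. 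Each bad line contributes at most $R_1$ cubes; using $R_1 \leq R_2$, the total contribution is at most $C_{n-1} D \prod_{j=2}^n R_j$. Combining the good and bad contributions closes the induction with $C_n = 2 + C_{n-1}$.

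The main obstacle is controlling the bad lines: they must be exhibited as a subset of a lower-dimensional algebraic variety of controlled degree (via the leading coefficient of $P$ with respect to $x_1$) in order to apply the inductive hypothesis, and the sorting hypothesis $R_1 \leq R_2 \leq \cdots \leq R_n$ is essential so that the factor $R_1$ (from the $R_1$ cubes on a single bad line) is absorbed cleanly into $\prod_{j=2}^n R_j$. Everything else reduces to the one-dimensional root count and a careful bookkeeping of constants.
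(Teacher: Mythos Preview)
Your inductive step has a genuine gap in the treatment of the good columns. You partition the grid into $\prod_{j\ge 2} R_j$ columns of cubes parallel to $e_1$, but then identify each column with a single one-dimensional line $\ell$ and call the column good when $P|_\ell\not\equiv 0$. The claim that a good column contains at most $2D$ cubes meeting $Z(P)$ is false: $Z(P)$ is an $(n-1)$-dimensional hypersurface and can pass through every cube of a column while missing any fixed line through it. Concretely, in dimension $n=2$ take $P(x_1,x_2)=x_2-\tfrac12$ and the column $[0,R_1]\times[0,1]$; with representative line $\ell=\{x_2=0\}$ one has $P|_\ell\equiv -\tfrac12$, a nonzero constant (so the column is ``good''), yet $Z(P)=\{x_2=\tfrac12\}$ meets all $R_1$ cubes in that column. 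The univariate root count for $P$ along a single line simply does not control how many cubes of the surrounding tube the hypersurface $Z(P)$ enters.

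The argument the paper points to (\cite[Theorem~4.8]{restest}) also inducts on $n$, but the inductive hypothesis is applied on the $(n-1)$-dimensional \emph{faces} of the grid: one restricts $P$ to each hyperplane $\{x_1=k\}$, obtaining an $(n-1)$-variable polynomial of degree at most $D$ on the $R_2\times\cdots\times R_n$ grid, and invokes the $(n-1)$-dimensional bound there. The univariate root count enters to limit (to at most $D$) the number of such hyperplanes on which $P$ vanishes identically. Your good/bad dichotomy based on a single representative line per column is not a substitute for this face-level analysis.
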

The proof of Theorem \ref{thm: wongkew} is an induction on $n$, exploiting the vanishing of a degree $D$ univariate polynomial to count the intersection of $Z(P)$ with $(n-1)$-faces of the grid. We refer the reader to \cite[Theorem 4.8]{restest} for the details. 

\begin{proof}[Proof of Lemma \ref{lem: tang_reduced}]
    If $T \in \mathbb T_{tang}$, then $T \cap B$ lies in the $O(1)$-neighborhood $N_{O(1)} Z$ of $Z$. This is straightforward, and the case of a ray coming off the $x$-axis at an angle $1/L$ in the plane gives the main idea. 

    It is convenient to write $|\mathbb T| = \beta L$ and show that $\beta \lesssim D^2 \log^2 L$. 
    The idea of the hairbrush argument is to extract a large structured subcollection $H$ of $\mathbb T$ which can be estimated from below. The subcollection $H$ will end up looking like a hairbrush with bristles extending radially from a central axis, more or less making the same angle with the central axis. With all this structure, the volume of $H$ can be estimated quite well from below. In the absence of more information about the tubes, a reasonable upper bound for the volume of $H$ is the volume of the box containing it. 
    However the tubes in $H$ also live in the $O(1)$-neighborhood of $Z$ so we can upper bound the volume of $H$ by upper bounding the piece of the $O(1)$-neighborhood of $Z$ inside the box. This is precisely the estimate provided by Theorem \ref{thm: wongkew}. Playing the upper and lower bounds against each other, one obtains $\beta \lesssim D^2 \log^2(L)$. 

    Now we execute this outline. Cover $N_{O(1)} \cap B$ with unit cubes $Q$. By Theorem \ref{thm: wongkew}, there are $\lesssim D L^2$ such $Q$. 
    We want to count the triples $(Q, T_1, T_2)$, where $T_1$ and $T_2$ are incident to $Q$ and trim the collection to end up with a hairbrush. 
    We have $\sum_Q \sum_T 1_{Q \cap T \neq \emptyset} \sim \sum_T L \sim |\mathbb T| L \sim \beta L^2$, since each of the $\beta L$ tubes $T$ intersect $\sim L$ cubes $Q$. On the other hand, Cauchy-Schwarz gives 
    \begin{align*}
        \left(\sum_Q \sum_T 1_{Q \cap T \neq \emptyset}\right)^{2} &\leq \sum_Q 1 \cdot \sum_{Q} \sum_{T_1, T_2} 1_{Q \cap T_1 \neq \emptyset} 1_{Q \cap T_2 \neq \emptyset}  \\
        &\lesssim DL^2 \cdot \# \{(Q, T_1,T_2)\}.
    \end{align*}
    So $\# \{(Q, T_1,T_2)\} \gtrsim \beta^2 D^{-1} L^2$. This is the correct bound since on average each cube intersects $\beta D^{-1}$ tubes, so one expects $\# \{(Q, T_1,T_2)\} \sim \# Q \cdot (\beta D^{-1})^2 \sim \beta^2 D^{-1} L^2$. 
    The angles $\angle(T_1, T_2)$ between (the central axes of) tubes $T_1$ and $T_2$ range between $1/L$ and $\pi/2$. Group the triples into $\sim \log L$ dyadic blocks according to $\angle(T_1, T_2)$ and pick the block with $\angle(T_1, T_2) \in [\theta, 2\theta]$ that is most popular. So there are $\gtrsim \beta^2 D^{-1} L^2 (\log L)^{-1}$ triples $(Q, T_1, T_2)$ with $\angle(T_1, T_2) \in [\theta, 2\theta]$. 

    There are $\beta L$ tubes $T_1$, so by the pigeonhole principle there is a $T_1$ common to $\gtrsim \beta D^{-1} L (\log L)^{-1}$ tuples $(Q, T_1, T_2)$ with $\angle(T_1, T_2) \in [\theta, 2\theta]$. Then the hairbrush $H$ (with stem $T_1$) is the union of the tubes in these tuples. 
    Now we use the structure to lower-bound $|H|$. A fixed tube $T_2$ can appear in $\lesssim \theta^{-1}$ triples since $\angle(T_1, T_2) \lesssim \theta$.
    So there are $ \gtrsim \beta D^{-1} L (\log L)^{-1} \theta$ ``bristles'' $T_2$ on $H$. 
    
    We can't quite say that $|H| \sim \#(\text{tubes $T_2$ in $H$}) L$ since the tubes may overlap. However the overlap is minor because of the $1/L$-angle separation, so we can show the next best thing,
    \begin{lemma} \label{lem: standard_2dim_Kak_reduction}
        With $H$ as above, $|H| \gtrsim (\log L)^{-1} \#(\text{tubes $T_2$ in $H$}) L$.
    \end{lemma}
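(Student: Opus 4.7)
I would apply Cauchy--Schwarz to the multiplicity function $f(x) := \sum_{T_2} \chi_{T_2}(x)$, where the sum runs over the bristles of $H$. Writing $N := \#(\text{bristles } T_2 \text{ in } H)$, one has $\int f = \sum_{T_2} |T_2| \sim NL$, and since $\{f > 0\} \subseteq H$,
\[
|H| \;\geq\; \frac{(\int f)^2}{\int f^2} \;=\; \frac{(NL)^2}{\sum_{T_2, T_2'} |T_2 \cap T_2'|}.
\]
It therefore suffices to prove the overlap bound $\sum_{T_2, T_2'} |T_2 \cap T_2'| \lesssim NL \log L$, which is precisely the quantitative form of the claim that the $1/L$-angular separation limits the $3$-dimensional overlap of the bristles to $O(\log L)$.

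The pair count is organised dyadically by the 3D angle $\phi = \angle(T_2, T_2')$. Two unit-width tubes meeting at angle $\phi \in [1/L, 1]$ have intersection of volume $\lesssim 1/\phi$, and the pair intersects at all only when the perpendicular distance between their axes is at most $2$. Since both bristles meet $T_1$ at angles in the dyadic range $[\theta, 2\theta]$, a direct computation with the cross-product formula for line-to-line distance shows this perpendicular distance is comparable to $\theta$ times the separation of the bristles' entry points on $T_1$, so intersection forces the entries to lie within $O(1/\theta)$ of each other. Combining this start-separation constraint with the 3D angular-separation (which bounds the number of bristle directions at angle $\sim \phi$ from $T_2$ by $\lesssim L^2 \phi^2$ for $\phi \leq \theta$ and by $\lesssim L^2 \phi \theta$ for $\phi \geq \theta$, capped by the annulus of polar angles $[\theta, 2\theta]$) and the volume bound $1/\phi$, then summing over the $O(\log L)$ dyadic scales $\phi \in [1/L, 1]$, one obtains the required overlap bound.

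The main obstacle is making the pair-count uniform across all hairbrush configurations. In the ``spread-out'' regime, where entry points on $T_1$ are well-distributed, the start-separation restriction gives a factor $\lesssim 1/(L\theta)$ and the above count produces $\sum_{T_2'} |T_2 \cap T_2'| \lesssim L \log L$ per bristle, as desired. In the opposite ``bush-like'' regime, where many bristles cluster their entries near a common point of $T_1$, that factor is lost and Cauchy--Schwarz is lossy; but the offending cluster is essentially a classical bush, for which a direct lower bound gives $|H| \gtrsim NL$, already stronger than $NL/\log L$. A unified bound therefore follows by decomposing $T_1$ dyadically by the local density of bristle entries and interpolating the two regimes, which is the technical core of the Wolff-type hairbrush estimate specialised to this fixed-angle setting.
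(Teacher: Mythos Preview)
Your Cauchy--Schwarz setup is correct and the overlap target $\sum_{T_2,T_2'}|T_2\cap T_2'|\lesssim NL\log L$ is exactly what is needed. But the three-dimensional pair count you sketch has a genuine gap: the cross-product claim that the line-to-line distance is $\sim \theta\cdot(\text{entry separation})$ is false when the two bristles lie in nearly the same plane through the stem but on \emph{opposite} sides. Writing the azimuthal angle between their planes as $\alpha$, the distance is $s\theta|\cos(\alpha/2)|$, which vanishes as $\alpha\to\pi$; so the ``entries within $O(1/\theta)$'' constraint does not hold uniformly, and your spread-out count breaks down precisely there. The subsequent bush/spread-out dichotomy and density decomposition are then neither necessary nor carried out.

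The paper avoids all of this by slicing the hairbrush into $\sim\theta L$ slabs of thickness $\sim1$, each a thickened plane through the stem $T_1$. Every bristle lies in essentially one slab, and within a fixed slab the picture is genuinely two-dimensional: the bristles are $1/L$-separated planar tubes, so the standard 2D overlap estimate $\sum_{T,T'\subset S}|T\cap T'|\lesssim \#\{T\subset S\}\cdot L\log L$ follows by a single dyadic sum in the planar angle $\phi$ (using $|T\cap T'|\lesssim\phi^{-1}$ and at most $\phi L$ tubes at angle $\sim\phi$). Applying Cauchy--Schwarz \emph{per slab} and summing over slabs gives $|H|\gtrsim(\log L)^{-1}NL$ directly, with no 3D angle bookkeeping and no density dichotomy. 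In fact the slab decomposition also proves your global overlap bound for free, since cross-slab intersections are negligible; the point is that the slabs are the structure that makes the count clean.
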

    
    The hairbrush $H$ can be fit into a cylinder with length $L$ and radius $\theta L$. Recalling that $H \subset N_{O(1)} Z$, we may apply Theorem \ref{thm: wongkew} to get $|H| \lesssim DL^2 \theta$. Comparing this upper bound for $|H|$ to the lower bound in Lemma \ref{lem: standard_2dim_Kak_reduction}, we get $\beta \lesssim D^2 \log^2 L$.     
\end{proof}

\begin{figure}[ht]
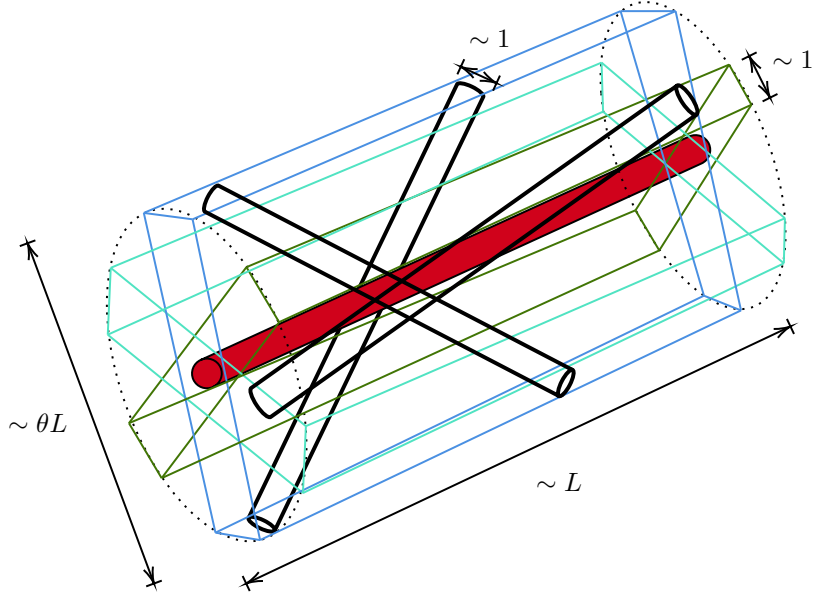
 
    \centering
    \input Pictures/Geometry_section/Wolff_hairbrush_tikz.tex
    \caption{Schematic drawing of the hairbrush $H$. The slabs $S$ are the blue, green, and cyan prisms. The central tube is red and the other tubes in the hairbrush are outlined in black. The other tubes intersect the central tube in an angle $\sim \theta$.}
    \label{fig:Wolff_hairbrush}
\end{figure}

Now we give the proof of Lemma \ref{lem: standard_2dim_Kak_reduction} which is relatively standard. Divide $H$ into $\sim \theta L$ many slabs $S$ of thickness $\sim 1$ as in the Figure \ref{fig:Wolff_hairbrush}. Outside the $\theta/10$-neighborhood $C$ of the stem, each point belongs to $\sim 1$ slabs. Then 
\begin{equation*}
    \#\{T \subset S\} L \sim \int_{S \cap (H \setminus C)} \sum_{T \subset S} \chi_T.
\end{equation*}
This holds since for any tube $T \in H$ one has $|T \cap C| \lesssim \theta L/10$. Hence $|T \cap (H \setminus C)| \gtrsim L(1 - \theta/10) \gtrsim L$. 

Applying Cauchy-Schwarz, we get 
\begin{equation}\label{eq:CS-step_2dim_Kak}
\#\{T \subset S\} L \lesssim |S \cap (H \setminus C)|^{1/2} \|\sum_{T \subset S} \chi_T \|_{L^2(S)}. 
\end{equation}
We are very comfortable with the norm on the RHS, since it is essentially Kakeya in dimension 2. To estimate it, write 
\begin{align*}
    \int_S (\sum_{T \subset S} \chi_T)^2 &= \int_S \sum_{T, T' \subset S} \chi_T \chi_{T'} \\
    &= \sum_{T, T' \subset S} |T \cap T'| \\
    &\lesssim \log L \sum_{\substack{T, T' \subset S \\ \angle(T, T') \in [\phi, 2\phi]}} |T \cap T'|.
\end{align*}
Since $T$ and $T'$ meet at an angle $\sim \phi$, plane geometry gives $|T \cap T'| \lesssim \phi^{-1}$.
For fixed $T \subset S$, there are at most $\phi L$ tubes $T' \subset S$ meeting $T'$ at angle $\phi$ due to the $L^{-1}$ angle separation. Hence 
\begin{align*}
    \sum_{\substack{T, T' \subset S \\ \angle(T, T') \in [\phi, 2\phi]}} |T \cap T'| \lesssim \sum_{T \subset S} (\phi L) \phi^{-1} = \#\{T \subset S\} L.
\end{align*}

Plugging this into \eqref{eq:CS-step_2dim_Kak}, we get $\#\{T \subset S\} L \lesssim (\log L)^{1/2} L^{1/2} |S \cap (H \setminus C)|^{1/2} \#\{T \subset S\}^{1/2}$. Rearranging, $|S \cap (H \setminus C)| \gtrsim (\log L)^{-1} \#\{T \subset S\} L$. Finally we sum over the slabs $S$ and use that outside $C$ each point belongs to $\sim 1$ slabs to get 
\begin{align*}
    |H \setminus C| \gtrsim (\log L)^{-1} \#\{\text{tubes } T \text{ in } H\},
\end{align*}
implying the result.

\subsection{The Polynomial Wolff Axioms and restriction in higher dimensions}

In \cite{Guth_polypart_2}, Guth extends the techniques of \cite{restest} to higher dimensions. As of 2017, this gave the best restriction estimates in dimension $n \geq 4$. We state the restriction conjecture in dimension $n$. 
\begin{conjecture}\label{conj: high_dim_restriction}
    Let $\mathbb P^{n-1} \subset \R^n$ be the truncated paraboloid
    \begin{equation*}
        \mathbb P^{n-1} = \{(\omega', |\omega'|^2) : |\omega'| \leq 1\}.
    \end{equation*}
    Define the extension operator for $\mathbb P^{n-1}$ by 
    \begin{equation*}
        Ef(x) := \int_{\mathbb P^{n-1}} e^{i \omega x} f(\omega) d\mathrm{vol}_{\mathbb P^{n-1}}(\omega).
    \end{equation*}
    Then 
    \begin{equation*}
        \| Ef \|_{L^p(\R^n)} \lesssim \| f\|_{L^p(\mathbb P^{n-1})}
    \end{equation*}
    holds for $p > \frac{2 n }{n-1}$.
\end{conjecture}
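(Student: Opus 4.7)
The plan is to attempt Conjecture \ref{conj: high_dim_restriction} by extending the polynomial partitioning program of \cite{restest} to dimension $n$, while being candid that the full range $p > 2n/(n-1)$ is open and that my scheme below, in its current form, only gets partway. First I would perform Tao's $\varepsilon$-removal to reduce to proving the local estimate $\|Ef\|_{L^p(B_R)} \lesssim R^\varepsilon \|f\|_\infty$ for $p > 2n/(n-1)$ and all $\varepsilon > 0$. After initial reductions (rotation, parabolic rescaling, reduction to a small $C^L$ perturbation of the paraboloid), I would set up a wave packet decomposition at scale $R^{1/2+\delta}$ associated to caps $\theta$ of radius $R^{-1/2}$, giving tubes $\mathbb{T}(\theta)$ of length $R$ adapted to $\mathbb{P}^{n-1}$.

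Next I would introduce the Bourgain--Guth $k$-broad part $\mathrm{Br}_{\alpha}^k Ef$, where a point $x$ is $k$-broad if no $(k-1)$-dimensional affine subspace captures all the caps making a large contribution to $Ef(x)$. The natural inductive target is a bound of the form
\begin{equation*}
\|\mathrm{Br}_{\alpha}^k Ef\|_{L^p(B_R)} \lesssim R^\varepsilon \Bigl(\sum_\tau \|f_\tau\|_2^2\Bigr)^{1/2} R^{\delta_{\mathrm{trans}}\log(\alpha \mu K^\varepsilon)}
\end{equation*}
for $p$ in some range depending on $k$ and $n$. To prove this I would run the three-case polynomial partitioning argument of Section \ref{sec: Induction}: choose $P$ of degree $D = R^{\delta_{\mathrm{deg}}}$ equipartitioning $\int_{B_R} (\mathrm{Br}_\alpha^k Ef)^p$, split into the cellular, transverse, and tangential contributions, close the induction on $R$ and on $\sum_\tau \|f_\tau\|_2^2$ in the cellular and transverse cases exactly as before, and reserve the tangential case for a higher-dimensional analogue of Lemma \ref{lem: tangent}.

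For the tangential estimate I would invoke the Polynomial Wolff Axioms of Katz--Rogers and Zahl, which bound the number of directions of tubes tangent to a degree $D$ variety in a ball $B_j$ of radius $R^{1-\delta}$, combined with a multilinear Kakeya / $k$-linear restriction estimate of Bennett--Carbery--Tao to replace the two-dimensional Córdoba $L^4$ argument that sufficed in $\mathbb{R}^3$. This yields, as in \cite{Guth_polypart_2} and its refinements by Hickman--Rogers--Zhang, the restriction estimate for $p$ in a range that improves on Tomas--Stein but stops strictly short of $2n/(n-1)$ for $n \geq 4$.

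The main obstacle, and the reason the full conjecture remains open, is the passage from the $k$-broad estimate to the full linear estimate. The narrow contribution, where essentially all mass concentrates near an affine $(k-1)$-plane, is controlled by invoking the restriction conjecture in lower dimension, and the resulting trade-off between $k$ (which we want large, to exploit transversality) and the lower-dimensional input (which we want small, because it is itself conjectural) never closes up to the endpoint $p > 2n/(n-1)$. Any honest completion of the plan would require a genuinely new ingredient: a way to capture stronger-than-$L^2$ orthogonality between wave packets clustered near low-dimensional subvarieties (for instance, by weaving $\ell^2$ decoupling of Bourgain--Demeter, or the refined Kakeya / small-cap decoupling estimates of Guth--Iosevich--Ou--Wang and Demeter--Guth--Wang, into the partitioning induction), or alternatively a sharp version of the Polynomial Wolff Axioms that controls not just directions but full incidences. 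In the absence of such an ingredient, the proposed scheme will, just as in the current literature, stop at a strictly smaller range than $p > 2n/(n-1)$.
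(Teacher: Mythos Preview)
The statement you were asked to prove is labeled a \emph{conjecture} in the paper, and for good reason: the restriction conjecture in dimension $n \geq 3$ is open, and the paper does not supply a proof. There is therefore no ``paper's own proof'' against which to compare your proposal. You have correctly recognised this and have instead sketched the current polynomial-partitioning / $k$-broad / Polynomial Wolff Axioms program, together with an honest account of where and why it stalls short of the endpoint $p > 2n/(n-1)$. That is the appropriate response here; no gap to name, since you explicitly flag that the scheme does not reach the full conjectured range.
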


In \cite{Guth_polypart_2}, Guth conjectures an analogue of Lemma \ref{lem: tangent} that following similar techniques to \cite{restest}, would translate to even further progress on the restriction conjecture in higher dimensions. 
\begin{conjecture}\label{conj: poly-wolff}
    Let $Z$ be an $m$-dimensional variety in $\R^n$ of degree at most $D$. If $\mathbb T$ is a collection of $1/L$-angle separated tubes where tangency to $Z$ is as in the dimension 3 case Lemma \ref{lem: tangent}, we want 
    \begin{equation*}
        \#\{T \in \mathbb T \text{ tangent to } Z\}  \lesssim_{n, D, \epsilon} L^{m-1 + \epsilon}
    \end{equation*}
    for any $\epsilon > 0$. 
\end{conjecture}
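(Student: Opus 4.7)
The plan is to prove Conjecture \ref{conj: poly-wolff} by strong induction on $m$, combining the polynomial partitioning and hairbrush machinery behind Lemma \ref{lem: tang_reduced} with a dimension reduction that uses the partitioning polynomial itself to cut $Z$ down by one dimension. The base case $m = 1$ should follow from a direct Bezout-type argument: the tangent directions along a curve of degree at most $D$ in $\R^n$ trace out an algebraic curve in $S^{n-1}$ whose degree is bounded in terms of $n$ and $D$, so at resolution $1/L$ only $O_{n, D}(1)$ distinct directions appear, giving $|\mathbb{T}| \lesssim_{n, D} 1 \ll L^{\varepsilon}$.

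For the inductive step, assume the estimate in dimensions strictly less than $m$, and apply polynomial partitioning to the collection $\mathbb{T}$ (weighted by tube-length as a measure), producing an auxiliary polynomial $P$ of some degree $D' = D'(n, D, \varepsilon)$, cells $O_i$, and a thickened variety $W = N_{O(1)} Z(P)$. Split $\mathbb{T}$ into (i) tubes which spend the bulk of their length inside a single cell, and (ii) tubes which spend the bulk of their length inside $W$. For type (ii) tubes, being $1/L$-tangent to $Z$ together with being confined to $N_{O(1)} Z(P)$ forces them to be $1/L$-tangent to the intersection $Z \cap Z(P)$, which is generically an $(m-1)$-dimensional variety of degree $O(D D')$; the inductive hypothesis then gives $\lesssim L^{m - 2 + \varepsilon}$, comfortably below the target. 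For type (i) tubes, each tube crosses $O(D')$ cells while each cell meets at most $|\mathbb{T}|/(D')^n$ tubes, and a hairbrush/Wongkew argument analogous to the one in the proof of Lemma \ref{lem: tang_reduced}, run on a well-chosen cell, should yield a self-improving bound. Balancing $D'$ against $L$ (taking $D'$ a small power of $L$ depending on $\varepsilon$) should absorb the finite recursion losses into $L^{\varepsilon}$ and close the induction.

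The main obstacle is the cellular case. The algebraic case drops the dimension of the variety by one and feeds cleanly into the induction, but the cellular case does not reduce $m$, so the induction on dimension alone cannot close it; one needs a genuinely new estimate at each cellular step. In $\R^3$ Wolff's hairbrush plus Wongkew's theorem suffices because a single hairbrush already saturates the relevant volume bound, but in higher dimensions a single hairbrush does not, and one plausibly needs a nested or multi-scale partitioning argument in the spirit of Katz--Rogers. Carefully tracking the $1/L$-angle separation through the rescalings, handling the ``bad'' tubes that are tangential to $Z$ but only weakly confined to $W$, and preventing an accumulation of $L^{\varepsilon}$ losses over the recursion depth, is where all the real work will live.
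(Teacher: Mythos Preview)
The statement you are attempting to prove is labelled \emph{Conjecture} in the paper, and the paper does not supply a proof. It records that Katz and Rogers resolved the conjecture, and that their argument relies on ``deep results in geometry and logic'' --- concretely, Tarski's quantifier elimination for real closed fields together with parametrisation results of Yomdin--Gromov type for semialgebraic sets. That route bypasses polynomial partitioning and the hairbrush mechanism entirely: one shows that the set of directions of tubes $1/L$-tangent to $Z$ is semialgebraic of controlled complexity and dimension at most $m-1$, and then counts $1/L$-separated points in such a set directly. So there is no ``paper's own proof'' to compare against, and the machinery that actually settles the conjecture is of a completely different flavour from what you outline.

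Your self-diagnosed obstacle is the real one. The cellular branch of your induction does not reduce $m$, so the induction on $m$ cannot close it; you need an honest gain at scale $L$ inside a single cell. In $\R^3$ with $m=2$ the hairbrush plus Wongkew's volume bound gives exactly the gain $L^{m-1}=L$, because a single hairbrush already saturates the Wongkew upper bound on $|N_{O(1)}Z\cap\text{cylinder}|$. For general $m$ one would need an $(m-1)$-fold iterated hairbrush to reach $L^{m-1}$, and no such iteration is known to close; each layer of the brush costs structure you no longer control. Your remark that one ``plausibly needs a nested or multi-scale partitioning argument in the spirit of Katz--Rogers'' misidentifies their method: they do not partition at all.

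There is also a smaller gap in your step (ii). A tube confined to $N_{O(1)}Z(P)$ is not automatically $1/L$-tangent to $Z(P)$: it could cross the wall transversally while remaining inside the thickened wall over its intersection with $B_L$, exactly as in the tangential/transversal dichotomy of the original argument. So before you can pass to $Z\cap Z(P)$ and drop a dimension, you must first split the wall tubes into transversal (handled by the analogue of Lemma~\ref{lem: transverse}) and tangential-to-$Z(P)$; only the latter feed into the inductive hypothesis on $m-1$. This is fixable, but it means the scheme already has three branches, not two, and the transversal branch also needs its own bookkeeping.
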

The conclusion of Conjecture \ref{conj: poly-wolff} has come to be known as the Polynomial Wolff Axioms. This name originates from the study of the Kakeya conjecture, which one may regard as the restriction conjecture averaged over wave packets. Often one assumes that the tubes in a Kakeya set satisfy some Wolff Axioms and prove conditional estimates on its dimension. 

Conjecture \ref{conj: poly-wolff} matches Lemma \ref{lem: tang_reduced} in the case $m = 2$. Katz and Rogers \cite{KatzRogersPolyWolff} were able to resolve Conjecture \ref{conj: poly-wolff} completely by using some deep results in geometry and logic. Hickman and Zahl plugged this result into the proof in \cite{Guth_polypart_2} and, as Guth suspected, obtained improved restriction estimates for $n \geq 4$ \cite{hickman2020note}. More general polynomial Wolff axioms were verified for curved tubes that correspond to the phase functions of certain H\"ormander operators in \cite{guo2023dichotomy}, giving further improvements to restriction. To the best of the author's knowledge, this result is still the state of the art. 

\subsection{Improvements to restriction in dimension 3}

Hong Wang \cite{WangBrooms} built on the arguments of Guth \cite{restest} to obtain the restriction estimate as it appears in Theorem \ref{thm: restriction_result} with $p > 3 + \frac{1}{13}$.  The broad estimate, Theorem \ref{broadest}, was improved to the form 
\begin{equation*}
    \|\text{Br}_{K^{-\varepsilon}} Ef\|_{L^{p}(B_R)}\leq C_\varepsilon R^\varepsilon \|f\|_2^{2/p}\|f\|_\infty^{1-2/p},
\end{equation*}
for $p \geq 3 + 3/13$. In \cite{WangBrooms}, the norm $\|f\|_\infty$ is actually replaced by an $L^2$ average over wave packets. This serves a similar role but is better behaved. It is no longer possible to estimate the cellular part of $Ef$ via induction on the radius $R$. Instead, the cellular part of $Ef$ needs to be broken into a local and global part. The local part is handled via induction on the radius, but the analysis of the global part is more delicate and uses a new geometric object called a `broom'. 

Wang and Wu \cite{wang2022improvedrestriction} built on \cite{WangBrooms} to improve the restriction estimate in dimension 3 to $p > 3 + 3/14$, which to the author's knowledge is the state of the art. The main idea was to apply a refined (in the sense of involving two different scales) Wolff hairbrush argument to estimate wave packets of $Ef$ passing through concentrated cells, and to apply the refined decoupling theorem from \cite{Falconer_refineddecoupling} to estimate wave packets passing through cells that are spread out.




\bibliographystyle{alpha}
\bibliography{main}

\newcommand{\etalchar}[1]{$^{#1}$}
\begin{thebibliography}{CEG{\etalchar{+}}90}

\bibitem[Bou91]{btypemax}
J.~Bourgain.
\newblock Besicovitch type maximal operators and applications to {F}ourier
  analysis.
\newblock {\em Geom. Funct. Anal.}, 1(2):147--187, 1991.

\bibitem[C\'82]{l4arg}
Antonio C\'{o}rdoba.
\newblock Geometric {F}ourier analysis.
\newblock {\em Ann. Inst. Fourier (Grenoble)}, 32(3):vii, 215--226, 1982.

\bibitem[Car12]{borsukulam}
Anthony Carbery.
\newblock The brouwer fixed point theorem and the borsuk--ulam theorem, 2012.

\bibitem[CEG{\etalchar{+}}90]{combcomp}
Kenneth~L. Clarkson, Herbert Edelsbrunner, Leonidas~J. Guibas, Micha Sharir,
  and Emo Welzl.
\newblock Combinatorial complexity bounds for arrangements of curves and
  spheres.
\newblock {\em Discrete Comput. Geom.}, 5(2):99--160, 1990.

\bibitem[CV13]{multkakeya2}
Anthony Carbery and Stef\'{a}n~Ingi Valdimarsson.
\newblock The endpoint multilinear {K}akeya theorem via the {B}orsuk-{U}lam
  theorem.
\newblock {\em J. Funct. Anal.}, 264(7):1643--1663, 2013.

\bibitem[Dvi09]{ffkakeya}
Zeev Dvir.
\newblock On the size of {K}akeya sets in finite fields.
\newblock {\em J. Amer. Math. Soc.}, 22(4):1093--1097, 2009.

\bibitem[GIOW20]{Falconer_refineddecoupling}
Larry Guth, Alex Iosevich, Yumeng Ou, and Hong Wang.
\newblock On {F}alconer's distance set problem in the plane.
\newblock {\em Invent. Math.}, 219(3):779--830, 2020.

\bibitem[GK15]{distdist}
Larry Guth and Nets~Hawk Katz.
\newblock On the {E}rd\"{o}s distinct distances problem in the plane.
\newblock {\em Ann. of Math. (2)}, 181(1):155--190, 2015.

\bibitem[Gut10]{multkakeya}
Larry Guth.
\newblock The endpoint case of the {B}ennett-{C}arbery-{T}ao multilinear
  {K}akeya conjecture.
\newblock {\em Acta Math.}, 205(2):263--286, 2010.

\bibitem[Gut16]{restest}
Larry Guth.
\newblock A restriction estimate using polynomial partitioning.
\newblock {\em J. Amer. Math. Soc.}, 29(2):371--413, 2016.

\bibitem[Gut18]{Guth_polypart_2}
Larry Guth.
\newblock Restriction estimates using polynomial partitioning {II}.
\newblock {\em Acta Math.}, 221(1):81--142, 2018.

\bibitem[GWZ23]{guo2023dichotomy}
Shaoming Guo, Hong Wang, and Ruixiang Zhang.
\newblock A dichotomy for h\"ormander-type oscillatory integral operators,
  2023.

\bibitem[HZ20]{hickman2020note}
Jonathan Hickman and Joshua Zahl.
\newblock A note on fourier restriction and nested polynomial wolff axioms,
  2020.

\bibitem[Kim17]{remarks}
Jongchon Kim.
\newblock Some remarks on fourier restriction estimates, 2017.

\bibitem[KR18]{KatzRogersPolyWolff}
Nets~Hawk Katz and Keith~M. Rogers.
\newblock On the polynomial {W}olff axioms.
\newblock {\em Geom. Funct. Anal.}, 28(6):1706--1716, 2018.

\bibitem[Mil64]{bettino}
J.~Milnor.
\newblock On the {B}etti numbers of real varieties.
\newblock {\em Proc. Amer. Math. Soc.}, 15:275--280, 1964.

\bibitem[ST12]{highdiminc}
J\'{o}zsef Solymosi and Terence Tao.
\newblock An incidence theorem in higher dimensions.
\newblock {\em Discrete Comput. Geom.}, 48(2):255--280, 2012.

\bibitem[Tao99]{brrest}
Terence Tao.
\newblock The {B}ochner-{R}iesz conjecture implies the restriction conjecture.
\newblock {\em Duke Math. J.}, 96(2):363--375, 1999.

\bibitem[Wan22]{WangBrooms}
Hong Wang.
\newblock A restriction estimate in {$\Bbb R^3$} using brooms.
\newblock {\em Duke Math. J.}, 171(8):1749--1822, 2022.

\bibitem[Wol95]{wolff_improved_kakeya_hairbrush}
Thomas Wolff.
\newblock An improved bound for {K}akeya type maximal functions.
\newblock {\em Rev. Mat. Iberoamericana}, 11(3):651--674, 1995.

\bibitem[Wol97]{kakeyacirc}
Thomas Wolff.
\newblock A {K}akeya-type problem for circles.
\newblock {\em Amer. J. Math.}, 119(5):985--1026, 1997.

\bibitem[Wol99]{reckakeya}
Thomas Wolff.
\newblock Recent work connected with the {K}akeya problem.
\newblock In {\em Prospects in mathematics ({P}rinceton, {NJ}, 1996)}, pages
  129--162. Amer. Math. Soc., Providence, RI, 1999.

\bibitem[Won93]{volneigh}
Richard Wongkew.
\newblock Volumes of tubular neighbourhoods of real algebraic varieties.
\newblock {\em Pacific J. Math.}, 159(1):177--184, 1993.

\bibitem[WW22]{wang2022improvedrestriction}
Hong Wang and Shukun Wu.
\newblock An improved restriction estimate in $\mathbb{R}^3$, 2022.

\end{thebibliography}

\end{document}